\definecolor{shadecolor}{rgb}{0.8,0.8,0.8}
\newcommand{\deriv}[2]{\frac{d#1}{d#2}}
\newcommand{\pd}[2]{\frac{\partial#1}{\partial#2}}
\newcommand{\SO}[1]{\operatorname{SO}(#1)}
\renewcommand{\O}[1]{\operatorname{O}(#1)}
\newcommand{\dist}{\operatorname{dist}}
\newcommand{\IP}[2]{\left\langle #1,#2 \right\rangle}
\newcommand{\id}{\operatorname{Id}}
\newcommand{\Vol}{\text{\textup{Vol}}}
\newcommand{\Volg}{\text{\textup{Vol}}_\g}
\newcommand{\Volh}{\text{\textup{Vol}}_\h}
\newcommand{\g}{\mathfrak{g}}
\newcommand{\euc}{\mathfrak{e}}
\newcommand{\h}{\mathfrak{h}}
\newcommand{\vp}{\varphi}
\newcommand{\e}{\varepsilon}
\newcommand{\Ga}{\Gamma}
\newcommand{\Lam}{\Lambda}
\newcommand{\TM}{T\M}
\newcommand{\TN}{T\N}
\newcommand{\GLp}{\operatorname{GL}_d^+}
\newcommand{\GLm}{\operatorname{GL}_d^-}
\newcommand{\SOd}{\operatorname{SO}(d)}
\newcommand{\dVolg}{\text{\textup{dVol}}_\g}
\newcommand{\dVol}{\text{\textup{dVol}}}
\newcommand{\dVoln}{\text{\textup{dVol}}_{\g_n}}
\newcommand{\dVolh}{\text{\textup{dVol}}_{\mathfrak{h}}}
\newcommand{\weakly}{\rightharpoonup}
\newcommand{\Cof}{\operatorname{Cof}}
\newcommand{\cof}{\operatorname{cof}}
\newcommand{\Det}{\operatorname{Det}}
\newcommand{\tr}{\operatorname{tr}}
\newcommand{\Hom}{\operatorname{Hom}}
\newcommand{\image}{\operatorname{Image}}
\newcommand{\TstarM}{T^*\M}
\renewcommand{\div}{\operatorname{div}}
\newcommand{\M}{{\mathcal{M}}}
\newcommand{\N}{\mathcal{N}}
\newcommand{\W}{\Omega}
\newcommand{\R}{\mathbb{R}}
\newcommand{\calD}{\mathcal{D}}
\newcommand{\bbN}{\mathbb{N}}
\newcommand{\innerp}[1]{\left\langle#1\right\rangle}
\newcommand{\pl}{\partial}
\newcommand{\teq}[1]{\stackrel{\mathrm{#1}}{=}}
\newcommand{\inner}[1]{\left\langle#1\right\rangle}      
\newcommand{\brk}[1]{\left(#1\right)}          
\newcommand{\Brk}[1]{\left[#1\right]}          
\newcommand{\BRK}[1]{\left\{#1\right\}}        
\newcommand{\Cases}[1]{\begin{cases} #1 \end{cases}}
\newcommand{\beq}{\begin{equation}}
\newcommand{\eeq}{\end{equation}}
\newcommand{\thmref}[1]{Theorem~\ref{#1}}
\newcommand{\defref}[1]{Definition~\ref{#1}}
\newcommand{\propref}[1]{Proposition~\ref{#1}}
\newcommand{\lemref}[1]{Lemma~\ref{#1}}
\newcommand{\corrref}[1]{Corollary~\ref{#1}}
\newcommand{\textand}{\quad\text{ and }\quad}
\newcommand{\Textand}{\qquad\text{ and }\qquad}
\newcommand{\sgn}{{\operatorname{sgn}}}
\newtheorem{theorem}{Theorem}[section]
\newtheorem{lemma}[theorem]{Lemma}
\newtheorem{proposition}[theorem]{Proposition}
\newtheorem{corollary}[theorem]{Corollary}
\newtheorem{definition}[theorem]{Definition}
\newenvironment{proof}{{\flushleft \emph{Proof}:}}{\hfill\ding{110}}
\newenvironment{remark}{{\flushleft \fontfamily{pzc}\bfseries\large Remark:}}{}
\newenvironment{example}{{\flushleft {\fontfamily{pzc}\bfseries\large Example}:}}{\hfill $\blacktriangle\blacktriangle\blacktriangle$}
\newcommand{\extp}{\@ifnextchar^\@extp{\@extp^{\,}}}
\def\@extp^#1{\mathop{\bigwedge\nolimits^{\!#1}}}
\numberwithin{equation}{section}
\begin{document}

\title{Reshetnyak rigidity for Riemannian manifolds}
\author{Raz Kupferman\footnote{Institute of Mathematics, The Hebrew University.
}, Cy Maor\footnote{Department of Mathematics, University of Toronto.} $\,$ and Asaf Shachar\footnotemark[1]}
\date{}
\maketitle

\begin{abstract}
We prove two rigidity theorems for maps between Riemannian manifolds.
First, we prove that a Lipschitz map $f:\M\to\N$ between two oriented Riemannian manifolds, whose differential is almost everywhere an orientation-preserving isometry, is an isometric immersion.
This theorem was previously proved using regularity theory for conformal maps; we give a new, simple proof, by generalizing the Piola identity for the cofactor operator.
Second, we prove that if there exists a sequence of mapping $f_n:\M\to\N$, whose differentials converge in $L^p$ to the set of orientation-preserving isometries, then there exists a subsequence converging to an isometric immersion.
These results are generalizations of celebrated rigidity theorems by Liouville (1850) and Reshetnyak (1967) from Euclidean to Riemannian settings. 
Finally, we describe applications of these theorems to non-Euclidean elasticity and to convergence notions of manifolds.
\end{abstract}

\tableofcontents

\section{Introduction, main results and applications}

In 1850, Liouville proved a celebrated rigidity theorem for conformal mappings \cite{Lio50}. An important corollary of Liouville's theorem is that a sufficiently smooth mapping $f:\Omega\subset \R^d\to \R^d$ that is everywhere a local isometry must be a global isometry. Specifically, if $f\in C^{1}(\Omega;\R^d)$ satisfies $df\in \SO{d}$ everywhere, then $f$ is an affine function, i.e., an isometric embedding of $\Omega$ into $\R^d$.

While from a modern perspective it seems rather trivial, Liouville's rigidity theorem was generalized in various highly non-trivial directions. 
One such direction is concerned with the regularity requirements on $f$.
As it turns out, it suffices to require that $f$ be Lipschitz continuous with $df\in \SO{d}$ almost everywhere (by Rademacher's theorem, Lipschitz continuous functions are a.e.~differentiable).
Indeed, any differentiable map $f$ satisfies (a weak form of)
\beq
\label{eq:Piola_id_Euclidean}
\div \cof df = 0,
\eeq
where $\cof df$ is the cofactor matrix, and the divergence operates row-wise \cite[Chapter 8.1.4.b.]{Eva98};
in the context of elasticity theory, identity \eqref{eq:Piola_id_Euclidean} is known as the \emph{Piola identity} \cite[Section 1.7]{Cia88}.
If $df\in \SO{d}$, then $df = \cof df$.
The Piola identity \eqref{eq:Piola_id_Euclidean} then implies that $f$ is weakly-harmonic, hence by Weyl's lemma, $f$ is smooth \cite{Res67b};
for a more complete survey on regularity see \cite{Lor13}.

Another type of generalization is due to Reshetnyak \cite{Res67}. It is concerned with sequences of mappings $f_n:\Omega\to\R^d$ that are asymptotically locally rigid in an average sense. Specifically,

\begin{quote}
\emph{
Let $\Omega\subset \R^d$ be an open, connected, bounded domain, and let $1\le p<\infty$.
If $f_n\in W^{1,p}(\Omega;\R^d)$ satisfy $\int_\Omega f_n\,dx = 0$ and
\[
\lim_{n\to \infty} \int_\Omega \dist^p(df_n, \SO{d})\,dx = 0,
\]
then $f_n$ has a subsequence converging in the strong $W^{1,p}(\Omega;\R^d)$ topology to an affine mapping.
}
\end{quote}

Here, $\dist(df, \SO{d}):\Omega\to\R$ is a measure of local distortion of $f$. Liouville's theorem (for Lipschitz mappings) states that if this local distortion vanishes almost everywhere, then $f$ is an isometric embedding. Reshetnyak's theorem states that a sequence of mappings for which the $L^p$-norm of the local distortion tends to zero, converges (modulo a subsequence) to an isometric embedding.
There exist many other generalizations of these rigidity theorems, for conformal mappings, multi-well potentials and so on, but they are farther away from the context of this paper.

\subsection{Liouville's theorem and the Piola identity for Riemannian manifolds}
\label{sec:int_Liouville_thm}

This paper is concerned with generalizations of Liouville's and Reshetnyak's theorems for mappings between Riemannian manifolds. 
In this sense, it deals with a rigidity of Riemannian manifolds. 
We note that in the literature, the term "rigidity of manifolds" may refers to many other things, e.g., questions of boundary rigidity and inverse problems on manifolds (see for example the survey \cite{Cro04}), or rigidity of submanifolds \cite[Chapter 12]{Spi99V};
the rigidity results presented in this paper are of a different nature.

Throughout this paper, $(\M,\g)$ and $(\N,\h)$ are compact, connected, oriented $d$-dimensional Riemannian manifolds (possibly with a $C^1$ boundary). 
The role of $\SO{d}$ is now played by $\SO{\g_x,\h_y}$---the set of orientation preserving transformations $T_x\M\to T_y\N$ (which by a choice of positively-oriented orthonormal frames, can be identified with $\SO{d}$).
Liouville's theorem for smooth mappings has a well-known generalization for manifolds:

\begin{quote}
\emph{
Let $f\in C^1(\M;\N)$ satisfy $df\in\SO{\g,f^*\h}$ everywhere in $\M$. Then, $f$ is smooth and rigid, in the sense that every $x\in\M$ has a neighborhood $U_x$ in which
\[
f = \exp^\N_{f(x)} \circ df_x\circ (\exp^\M_x)^{-1}.
\]
}
\end{quote}

Here, $\exp^M$ and $\exp^\N$ are the respective exponential maps in $\M$ and $\N$; for $x\in\M$, $\SO{\g,f^*\h}_x=\SO{\g_x,\h_{f(x)}}$.
The generalization of Liouville's theorem for smooth mappings states that a Riemannian isometry can be (locally) factorized via the mapping $f(x)$ and its derivative $df_x$ at a single point.

A first natural question is whether this generalization of Liouville's theorem holds if $f$ is assumed less regular. Namely:

\begin{theorem}[Liouville's rigidity for Lipschitz functions]
\label{thm:Liouville_M_N_Lipschitz}
Let $f\in W^{1,\infty}(\M;\N)$ satisfy $df\in \SO{\g,f^*\h}$ almost everywhere.
Then $f$ is smooth, hence a smooth isometric immersion.
\end{theorem}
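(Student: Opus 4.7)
The plan is to mimic the Euclidean chain of reasoning: a Piola identity combined with the fiberwise identity $\cof df = df$ for orientation-preserving isometries yields an elliptic PDE for $f$, whose regularity follows by a standard bootstrap; once $f$ is $C^1$, the smooth Liouville theorem quoted above identifies $f$ as an isometric immersion. The new ingredient is the generalization of the cofactor operator and the Piola identity to Riemannian manifolds.

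First, I would define $\cof df$ as a section of $\Hom(T\M, f^*T\N)$, fiberwise via the Hodge duality isomorphisms $\extp^{d-1} T_x\M \cong T_x\M$ and $\extp^{d-1} T_{f(x)}\N \cong T_{f(x)}\N$. In positively oriented orthonormal frames this reduces to the ordinary cofactor matrix, so the fiberwise algebraic fact that $\cof A = A$ is equivalent to $A \in \SO{\g_x, \h_{f(x)}}$ persists; in particular the hypothesis of the theorem gives $\cof df = df$ almost everywhere. Next, I would prove the Riemannian Piola identity,
\[
\div(\cof df) = 0 \qquad \text{in the distributional sense},
\]
where the divergence uses the connection on $\Hom(T\M, f^*T\N)$ built from the Levi-Civita connection of $\g$ and the pullback of the Levi-Civita connection of $\h$. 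The Euclidean proof rests on two ingredients: the algebraic identities of the cofactor (fiberwise, and so transferring verbatim) and the symmetry $\partial_\alpha \partial_\beta f = \partial_\beta \partial_\alpha f$. On a manifold the latter is replaced by the symmetry of $\nabla df$ in its two covariant slots, a consequence of the vanishing torsion of $\nabla^\g$; consequently the same cancellations produce $\div(\cof df) = 0$ with no curvature correction. In our $W^{1,\infty}$ setting, the identity should be established distributionally by mollification.

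Substituting $\cof df = df$ into the Piola identity yields the weak equation $\div(df) = 0$, i.e.\ the vanishing of the tension field of $f$; in local coordinates this is the harmonic map system
\[
\Delta_\g f^i + g^{\alpha\beta}\,\Gamma^i_{jk}(f)\,\partial_\alpha f^j\,\partial_\beta f^k = 0.
\]
Since $|df|^2 = d$ a.e.\ and $f \in W^{1,\infty}$, the quadratic nonlinearity is bounded, so $\Delta_\g f \in L^\infty$. A standard Schauder-type bootstrap then promotes $f$ successively to $C^{1,\alpha}$, $C^{2,\alpha}$, and ultimately $C^\infty$. The condition $df \in \SO{\g, f^*\h}$ now holds pointwise by continuity, and the smooth Liouville theorem quoted in the introduction identifies $f$ as a smooth isometric immersion.

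The main technical obstacle will be the rigorous derivation of the Riemannian Piola identity. One must check carefully that the algebraic cancellations of the Euclidean proof are not spoiled by the two distinct connections appearing in $\nabla(\cof df)$, and in particular that only $\nabla df$, and not commutators of second covariant derivatives (which would generate curvature terms from $\h$), features in the computation. Once this is settled, the rest of the argument is standard elliptic regularity.
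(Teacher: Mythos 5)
Your outline captures the right chain of ideas---$\Cof df = df$ for isometries, combine with a Piola identity to conclude $f$ is weakly harmonic, then apply harmonic-map regularity---and this is indeed the structure of the paper's proof. However there is a genuine gap in the step you yourself flag as ``the main technical obstacle,'' and it is not where you think it is. The curvature-cancellation worry is unfounded (the strong identity for smooth $f$ goes through; the paper proves it by a different route, as the Euler--Lagrange equation of the null-Lagrangian $f\mapsto\int_\M f^*\dVolh$, but a direct computation using the symmetry of $\nabla df$ also works). The real difficulty is formulating and proving a \emph{weak} Piola identity for $f\in W^{1,\infty}(\M;\N)$, and your proposal glosses over it.

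Concretely: $\Cof df$ is a section of $T^*\M\otimes f^*T\N$, and the coderivative $\delta_\nabla$ uses the pullback connection $\nabla^{f^*T\N}$, whose Christoffel symbols involve $\Gamma(f)\cdot df$. For a distributional statement you must pair against test sections of $f^*T\N$---but when $f$ is merely Lipschitz this is a rough, $f$-dependent bundle, and there is no natural dense class of smooth test sections. ``Establish the identity distributionally by mollification'' does not address this: mollifying $f$ in local coordinates of $\N$ changes the bundle $f^*T\N$ at every step and need not keep the image in $\N$, and it is not clear in what topology the pairings converge. This is precisely the obstruction the paper identifies and resolves by isometrically embedding $\iota:\N\hookrightarrow\R^D$: the weak Piola identity (Theorem~\ref{thm:weak_Piola_identity}) is then an integral identity pairing $f^*d\iota\circ\Cof df$ against test functions $\xi\in W^{1,2}_0\cap L^\infty(\M;\R^D)$, i.e.\ sections of the \emph{fixed} trivial bundle $\M\times\R^D$, at the cost of a second-fundamental-form term on the right-hand side. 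It is proved for smooth $f$ by rewriting the intrinsic identity extrinsically, and then extended to $W^{1,p}$ ($p\ge 2(d-1)$) by density of smooth maps in $W^{1,p}(\M;\N)$ for $p\ge d$. Without some version of this, the passage from the strong identity to the weak equation $\delta_\nabla df = 0$ for Lipschitz $f$ is unjustified.

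Once the weak equation is in hand, your remaining steps are fine in spirit but differ slightly from the paper in presentation: the paper reads off directly that $f$ is weakly harmonic in H\'elein's sense (Corollary~\ref{cor:Liouville_M_N_Lipschitz}), so smoothness is a black-box citation of \cite[Theorem 1.5.1]{Hel02}. Your Schauder bootstrap from $\Delta_\g f^i\in L^\infty$ would also succeed for Lipschitz $f$, since $|df|$ is bounded and the nonlinearity is quadratic in $df$; but note it requires the weak harmonic-map system in local coordinates, which is again downstream of the weak Piola identity. Finally, a minor point: once $f$ is smooth and $df\in\SO{\g,f^*\h}$, it is already a smooth isometric immersion by definition; the smooth Liouville theorem (local factorization via exponential maps) is not needed for the conclusion as stated.
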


This theorem was proved, in the wider context of the regularity of conformal maps \cite{Res78,Res94,LS14} (see also \cite{Har58,CH70,Tay06} for other results on the regularity of isometries).
The techniques used in these references are rather different from the simple argument based on the Piola identity \eqref{eq:Piola_id_Euclidean} and harmonicity in Euclidean space. 
Our first result is a new and simple proof to Theorem~\ref{thm:Liouville_M_N_Lipschitz}, that builds upon those very same arguments.
We first prove a Riemannian version of the Piola identity (Proposition~\ref{prop:divCof=0}):
\beq
\label{eq:Piola_id_general_strong}
\delta_\nabla \Cof d\vp =0
\eeq
valid for every $\vp\in C^2(\M;\N)$, where $\delta_\nabla$ is the co-differential induced by the Riemannian connection on $\vp^*T\N$, and $\Cof$ is an intrinsic cofactor operator (see Section~\ref{sec:det_cof}, and Section~\ref{sec:Piola_id_coor} for an expression in local coordinates).
We then prove a weak version of \eqref{eq:Piola_id_general_strong}, by embedding $\N$ isometrically into a Euclidean space of higher dimension:

\begin{theorem}
\label{thm:weak_Piola_identity}[Piola identity, weak formulation]
Let $f\in W^{1,p}(\M;\N)$ where $p\ge 2(d-1)$ ($p>2$ if $d=2$). Let $\iota:\N\to\R^D$
be an isometric embedding of $\N$ in $\R^D$ with second fundamental form $A$.
Then, for every $\xi \in W_0^{1,2}(\M;\R^D)\cap L^\infty(\M;\R^D)$,
\beq
\int_\M\innerp{f^*d\iota\circ\Cof df, \nabla \xi}_{\g,\euc} \,\dVolg =
\int_\M \innerp{\tr_\g f^*A(\Cof df,df),\xi}_\euc \,\dVolg,
\label{eq:Cof_df_orthogonal_extrinsic_a}
\eeq
where $\euc$ is the Euclidean metric on $\R^D$. $\nabla \xi$ is the trivial connection $\nabla^{\M\times\R^D}$ on the bundle $\M\times\R^D$. In other words, it is just a componentwise-differentiation.
\end{theorem}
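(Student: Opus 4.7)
I would prove \eqref{eq:Cof_df_orthogonal_extrinsic_a} first for smooth $f$ and smooth compactly supported $\xi$ by integration by parts, combining the strong Piola identity \eqref{eq:Piola_id_general_strong} with the Gauss formula for the isometric embedding $\iota:\N\hookrightarrow\R^D$; then I would extend to the stated regularity by smooth approximation.

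\textbf{Smooth case.} Given $f\in C^2(\M;\N)$ and $\xi\in C_c^\infty(\M;\R^D)$, set $\Phi := f^*d\iota\circ\Cof df$, viewed as an $\R^D$-valued $1$-form on $\M$. Componentwise integration by parts reduces the claim to identifying $\div_\g\Phi$. In a local orthonormal frame $\{e_i\}$ of $T\M$, the Gauss formula
\[
\nabla^{\M\times\R^D}_X\bigl(d\iota(V)\bigr) = d\iota\bigl(\nabla^{f^*T\N}_X V\bigr) + f^*A\bigl(df(X),V\bigr),\qquad V\in\Gamma(f^*T\N),
\]
applied with $X=e_i$, $V=\Cof df(e_i)$, then summed (with the frame-derivative correction $\Cof df(\nabla_{e_i}e_i)$ accounted for), gives
\[
\sum_i \bigl(\nabla^{\M\times\R^D}_{e_i}\Phi\bigr)(e_i) = d\iota\bigl(-\delta_\nabla\Cof df\bigr) + \tr_\g f^*A\bigl(df,\Cof df\bigr).
\]
The first term vanishes by \eqref{eq:Piola_id_general_strong}, and the second is the integrand on the right of \eqref{eq:Cof_df_orthogonal_extrinsic_a} (up to the sign absorbed by the chosen convention for $\delta$).

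\textbf{Approximation.} For $f\in W^{1,p}(\M;\N)$ with $p\ge 2(d-1)$ (and $p>2$ if $d=2$), I would approximate $f$ by smooth maps $f_n\in C^\infty(\M;\N)$ strongly in $W^{1,p}$; such smooth maps are dense since $p\ge d$ in the stated range. Similarly approximate $\xi$ by $\xi_n\in C_c^\infty(\M;\R^D)$ in $W^{1,2}$, uniformly bounded in $L^\infty$. The smooth identity holds for each pair $(f_n,\xi_n)$. To pass to the limit: $\Cof df_n\to\Cof df$ in $L^{p/(d-1)}$, since $\Cof$ is a polynomial of degree $d-1$ in $df\in L^p$; the choice $p\ge 2(d-1)$ ensures $\Cof df\in L^2$, which pairs with $\nabla\xi\in L^2$ on the left. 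On the right, $f^*A(\Cof df_n,df_n)\to f^*A(\Cof df,df)$ in $L^{p/d}\subseteq L^1$ by H\"older (since $p\ge d$), pairing with $\xi\in L^\infty$.

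\textbf{Main obstacle.} The hardest step will be the smooth-case calculation, which requires carefully distinguishing three connections --- the trivial connection on $\M\times\R^D$ used in the statement, the pullback connection on $f^*T\N$ underlying the strong Piola identity, and the Levi--Civita connection on $\M$ that governs the frame-derivatives $\nabla_{e_i}e_i$ --- and identifying their discrepancy via the Gauss formula. The exponent threshold $p\ge 2(d-1)$ is forced by exponent arithmetic: it is exactly the minimal condition making $\Cof df$ belong to $L^2$, so that both sides of \eqref{eq:Cof_df_orthogonal_extrinsic_a} make sense and converge under the approximation. Once the bookkeeping of connections is done and \eqref{eq:Piola_id_general_strong} invoked, the density argument is routine.
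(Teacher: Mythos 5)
Your proposal is correct and follows essentially the same route as the paper: start from the strong Piola identity $\delta_{\nabla^{f^*T\N}}\Cof df=0$, decompose the ambient trivial connection on $\M\times\R^D$ into tangential and normal parts (the paper phrases this via the projection $P$ onto $d\iota(T\N)$ and its complement, you phrase it via the Gauss formula, but these are the same identity), and then extend by density of smooth maps in $W^{1,p}(\M;\N)$, which holds precisely because $p>d$ in the stated range. Two small imprecisions worth flagging: the residual sign you mention is absorbed by the paper's convention for the second fundamental form, which is defined by $\innerp{A(u,v),\eta}_\euc=\innerp{d\iota\circ u,\nabla_v\eta}_\euc$ (the opposite of the usual Gauss-formula sign), \emph{not} by the convention for $\delta$, whose contribution vanishes anyway; and in the limit passage one also needs $f_n\to f$ uniformly (again a consequence of $p>d$, and the reason for requiring $p>2$ when $d=2$) to control the $f$-dependence hidden in $f_n^*d\iota$, in $f_n^*A$, and in the intrinsic $\Cof df_n$, which is not a bare polynomial in $df_n$ but also involves the metric at $f_n(x)$.
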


Since $df\in \SO{\g,f^*\h}$ implies $\Cof df = df$ (Corollary \ref{cor:DetAndCof} below), we obtain the following corollary, from which Theorem~\ref{thm:Liouville_M_N_Lipschitz} follows immediately:

\begin{corollary}[A.e.~local isometries are harmonic]
\label{cor:Liouville_M_N_Lipschitz}
Let $f\in W^{1,\infty}(\M;\N)$ satisfy $df\in \SO{\g,f^*\h}$ almost everywhere.
Then, $f$ is weakly-harmonic in the sense of \cite{Hel02}:
\beq
\int_\M\innerp{d(\iota\circ f), \nabla \xi}_{\g,\euc} \,\dVolg =
\int_\M \innerp{\tr_\g f^*A( df,df),\xi}_\euc \,\dVolg,
\eeq
for all $\xi \in W_0^{1,2}(\M;\R^D)\cap L^\infty(\M;\R^D)$.
In particular, by the regularity theorem for continuous, weakly-harmonic mappings \cite[Theorem 1.5.1]{Hel02}, $f$ is smooth.
\end{corollary}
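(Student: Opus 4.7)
The plan is to derive this as an essentially immediate consequence of Theorem~\ref{thm:weak_Piola_identity} together with the algebraic identity $\Cof df = df$ for orientation-preserving isometries, which is stated just above the corollary. No new analysis beyond what is already established in the paper should be required here.

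First, I would note that since $\M$ is compact and $f\in W^{1,\infty}(\M;\N)$, one automatically has $f\in W^{1,p}(\M;\N)$ for every $1\le p<\infty$; in particular the integrability hypothesis $p\ge 2(d-1)$ (or $p>2$ when $d=2$) of Theorem~\ref{thm:weak_Piola_identity} is satisfied. Thus the weak Piola identity
\[
\int_\M\innerp{f^*d\iota\circ\Cof df, \nabla \xi}_{\g,\euc} \,\dVolg = \int_\M \innerp{\tr_\g f^*A(\Cof df,df),\xi}_\euc \,\dVolg
\]
holds for every $\xi\in W_0^{1,2}(\M;\R^D)\cap L^\infty(\M;\R^D)$.

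Next, I would invoke the hypothesis $df\in\SO{\g,f^*\h}$ a.e., which, via Corollary~\ref{cor:DetAndCof}, yields the pointwise identity $\Cof df = df$ a.e.\ on $\M$. Substituting $\Cof df = df$ on both sides of the weak Piola identity, and applying the standard Sobolev chain rule to the Lipschitz composition $\iota\circ f$, namely $d(\iota\circ f)_x = d\iota_{f(x)}\circ df_x = (f^*d\iota\circ df)_x$ almost everywhere, transforms the left-hand side into $\int_\M\innerp{d(\iota\circ f),\nabla\xi}_{\g,\euc}\,\dVolg$. This is precisely the weak-harmonic equation in the sense of H\'elein.

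Finally, $f$ is continuous since any element of $W^{1,\infty}(\M;\N)$ is Lipschitz, so the regularity theorem \cite[Theorem~1.5.1]{Hel02} for continuous weakly-harmonic maps into a compact Riemannian target applies and gives smoothness. The only substantive ingredient is Theorem~\ref{thm:weak_Piola_identity}; once it is available there is no genuine obstacle in this argument, only a chain of algebraic identifications ($\Cof = \id$ on $\SO{\g,f^*\h}$, the chain rule, and the definition of weak harmonicity) followed by citation of the harmonic-maps regularity theory.
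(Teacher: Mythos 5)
Your proof is correct and follows exactly the route the paper takes: Theorem~\ref{thm:weak_Piola_identity} applies because $W^{1,\infty}(\M;\N)\subset W^{1,p}(\M;\N)$ for all finite $p$ on a compact domain, and Corollary~\ref{cor:DetAndCof} gives $\Cof df=df$ a.e., so the weak Piola identity collapses to the weak-harmonicity equation, after which H\'elein's regularity theorem applies. The chain rule $d(\iota\circ f)=f^*d\iota\circ df$ a.e.\ for the Lipschitz composition and the substitution on the right-hand side of the identity are both handled correctly, so there is nothing to add.
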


The combination of intrinsic and extrinsic approaches is essential to our approach: on the one hand, the Piola identity cannot be formulated for mappings $\M^d\to\R^D$, $d<D$, at least in a way by which the harmonicity of $f$ can be deduced.
On the other hand, it is not clear how to formulate a weak form of this identity without an isometric embedding into a Euclidean space, which naturally embeds $\TstarM\otimes f^*T\N$ into a (smooth) vector bundle $\TstarM\otimes\R^D$ independent of $f$.

We note that the Piola identity is of importance beyond the present context, as a fundamental identity in elasticity theory, see e.g.~\cite[Section 1.7]{Cia88} and \cite[Chapter~1.7]{MH83}.
A generalization of the Piola identity to manifolds appears in \cite[Chapter~1, Theorem~7.20]{MH83}, however, its formulation is slightly different from ours; it is not stated in the language of vector-valued forms and their exterior derivatives, which is the formulation needed here. Also, it lacks a weak version, and its coordinate formulation is wrong (a correct one is given in Section~\ref{sec:Piola_id_coor}).

\subsection{Reshetnyak's theorem for Riemannian manifolds}

A second natural question is whether a generalization of Reshetnyak's theorem can be established for mappings between manifolds. Suppose that $\M$ can be mapped into $\N$ with arbitrarily small mean local distortion. Can one deduce that $\M$ is isometrically immersible in $\N$? Moreover, suppose that those mappings are diffeomorphisms. Can one deduce that $\M$ and $\N$ are isometric? 

Our main result is a generalization of Reshetnyak's theorem, which answers positively all of these questions:
\begin{theorem}
\label{thm:Reshetnyak_M_N}
Let $(\M,\g)$ and $(\N,\h)$ be compact, oriented, $d$-dimensional Riemannian manifolds with $C^1$ boundary. Let $1\le p<\infty$ and let $f_n\in W^{1,p}(\M;\N)$ be a sequence of mappings satisfying  
\beq
\dist(d f_n, \SO{\g,f_n^*\h})\to 0 \qquad\text{in $L^p(\M)$}.
\label{eq:Lpconv_of_dist}
\eeq
Then, $\M$ can be immersed isometrically into $\N$, and there exists a subsequence of $f_n$ converging in $W^{1,p}(\M;\N)$ to a smooth isometric immersion $f:\M\to\N$.

Moreover, if $f_n(\pl \M)\subset \pl \N$ and $\Volg\M=\Volh\N$, then $\M$ and $\N$ are isometric and $f$ is an isometry.
In particular, these conditions hold if $f_n$ are diffeomorphisms.
\end{theorem}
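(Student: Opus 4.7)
My plan is to isometrically embed $\iota : \N \hookrightarrow \R^D$, reduce to Euclidean-valued maps $F_n := \iota \circ f_n$, extract a weak $W^{1,p}$ subsequential limit $F = \iota \circ f$, argue that $df \in \SO{\g, f^*\h}$ a.e., and apply Theorem~\ref{thm:Liouville_M_N_Lipschitz} to upgrade $f$ to a smooth isometric immersion. Since elements of $\SO{\g_x, \h_y}$ have unit operator norm, $\{dF_n\}$ is bounded in $L^p$, and compactness of $\iota(\N)$ makes $\{F_n\}$ bounded in $L^\infty$. Rellich--Kondrachov (valid for $C^1$ boundary) then yields a subsequence with $F_n \weakly F$ in $W^{1,p}$ and $F_n \to F$ in $L^p$ and almost everywhere; since $\iota(\N)$ is closed, $F(x) \in \iota(\N)$ a.e., so $f := \iota^{-1} \circ F \in W^{1,p}(\M;\N)$ is well-defined. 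Measurably select $R_n(x) \in \SO{\g_x,\h_{f_n(x)}}$ realizing the pointwise distance, so $df_n - R_n \to 0$ in $L^p$. Since $R_n^T R_n = \g$ pointwise, one obtains $df_n^T df_n \to \g$ in $L^{p/2}$, whence $|df_n|^2 \to d$ and $\det df_n \to 1$ in the appropriate $L^q$ norms.

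\textbf{The heart of the proof: identifying $df$ as an isometry.} Using $\Cof R = R$ for $R \in SO$ (Corollary~\ref{cor:DetAndCof}) and the polynomial structure of $\Cof$,
\[
\|\Cof df_n - df_n\|_{L^{p/(d-1)}} \le C\brk{1 + \|df_n\|_{L^p}}^{d-2}\|df_n - R_n\|_{L^p} \to 0,
\]
which, for $p$ in the range of Theorem~\ref{thm:weak_Piola_identity}, lets me pass to the limit in \eqref{eq:Cof_df_orthogonal_extrinsic_a} and conclude that $F$ satisfies the weak harmonicity equation of Corollary~\ref{cor:Liouville_M_N_Lipschitz}. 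To pin down $df$ pointwise I then combine three ingredients: (i)~lower semi-continuity of the Dirichlet energy yields $\int_\M |df|^2 \le d\,\Vol_\g\M$; (ii)~for $p \ge d$, weak continuity of the Jacobian gives $\int_\M \det df = \Vol_\g\M$, and the weak upper semi-continuity of the concave functional $h \mapsto \int_\M h^{2/d}$ together with Jensen's inequality forces $\det df \equiv 1$ a.e.; (iii)~the pointwise AM--GM inequality $|df|^2 \ge d(\det df)^{2/d} = d$ integrates to $\int_\M |df|^2 \ge d\,\Vol_\g\M$, so $|df|^2 = d$ a.e. The singular-value characterization of $SO$ (all singular values equal to $1$, with positive determinant) then gives $df \in \SO{\g,f^*\h}$ a.e., and Theorem~\ref{thm:Liouville_M_N_Lipschitz} yields that $f$ is a smooth isometric immersion.

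\textbf{Strong convergence and the moreover part.} As $f$ is a smooth isometric immersion, $\|dF\|_{L^p}^p = d^{p/2}\Vol_\g\M = \lim\|dF_n\|_{L^p}^p$; weak convergence plus norm convergence in the uniformly convex space $L^p$ ($1 < p < \infty$; a Vitali argument handles $p = 1$) gives strong $L^p$ convergence, so $f_n \to f$ in $W^{1,p}(\M;\N)$. Under $\Vol_\g\M = \Vol_\h\N$ and $f_n(\pl\M) \subset \pl\N$ (which passes to the limit via trace continuity), the identity $f^*\dVolh = \dVolg$ gives $\deg(f)\cdot\Vol_\h\N = \int_\M f^*\dVolh = \Vol_\g\M = \Vol_\h\N$, so $\deg(f) = 1$. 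A smooth local isometry of degree one between connected compact oriented $d$-manifolds respecting boundaries is a global isometry.

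\textbf{Main obstacle.} The hardest step is extracting the pointwise constraint $df \in \SO{\cdot,\cdot}$ from the weak $L^p$ convergence of $df_n$, since $\SO{\cdot,\cdot}$ is non-convex and not preserved by weak limits. The resolution requires simultaneously the Piola identity (for weak harmonicity), weak continuity of the Jacobian (needing $p \ge d$), and the rigidity of the AM--GM equality case---no single ingredient suffices. The range $1 \le p < \max\{d,\,2(d-1)\}$ lies outside the direct scope of these tools and will likely require a separate truncation/approximation argument that I have not detailed.
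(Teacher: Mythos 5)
Your strategy for the core step $df\in\SO{\g,f^*\h}$ a.e.\ is genuinely different from the paper's: the paper localizes to a coordinate chart, applies a Young-measure theorem on vector bundles to $df_n$, and identifies $\Det df=1$ and $\Cof df=df$ from the weak limits and weak continuity of $\Det$ and $\Cof$; you instead combine weak lower semi-continuity of the Dirichlet energy, strong convergence of $\Det df_n$ to $1$, and the rigidity of the equality case in the AM--GM inequality $|A|^2\ge d\,|\Det A|^{2/d}$. The AM--GM route is attractive because it avoids Young measures and the cofactor operator entirely, and the final ``pin-down'' step ($\Det df=1$, $|df|^2=d$, AM--GM equality $\Rightarrow$ all singular values equal $1$) is correct. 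Once $df\in\SO{\g,f^*\h}$ a.e.\ is established, $f\in W^{1,\infty}$ automatically, and applying Theorem~\ref{thm:Liouville_M_N_Lipschitz} is the right move.

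There are, however, two genuine gaps and one redundancy.

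First, you quietly slide from extrinsic to intrinsic objects. After embedding $\iota:\N\hookrightarrow\R^D$ you correctly work with $dF_n\in T^*\M\otimes\R^D$, but $\Det$ and $\Cof$ make no sense for these: there is no determinant for a linear map from a $d$-dimensional space to $\R^D$. The quantities $\Det df_n$, $\Cof df_n$ live on the $n$-dependent bundle $T^*\M\otimes f_n^*T\N$, and to speak of weak continuity ($\Det df_n\weakly\Det df$) you must first place everything on a common bundle. The only way to do this without losing orientation information is to localize: pick a coordinate chart $\phi:U\subset\R^d\to\N$ around $f(x)$, shrink to a neighborhood $\M'\ni x$ with $\overline{f(\M')}\subset\phi(U)$, and use uniform convergence $f_n\to f$ (this is where $p>d$ is really needed) to ensure $f_n(\M')\subset\phi(U)$ eventually. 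Only then are the $df_n$ sections of a fixed bundle $T^*\M'\otimes\R^d$, and the classical weak continuity of $\det$ (with the smooth $\sqrt{\det\h/\det\g}$ correction factor) applies. This localization is the technical heart of the proof and cannot be skipped; as written, your proposal never reconciles the extrinsic $F_n$ with the intrinsic $\Det df_n$.

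Second, the derivation of $\Det df=1$ via ``weak upper semi-continuity of $h\mapsto\int h^{2/d}$ plus Jensen'' does not close. From $\int_\M\Det df\,\dVolg=\Volg\M$ alone, Jensen gives only an upper bound $\int(\Det df)^{2/d}\le\Volg\M$; combined with (i) and (iii) you get the same upper bound again, but never a lower bound, so no pointwise rigidity. The clean fix is simpler than what you wrote: you already observe that $\Det df_n\to 1$ strongly in $L^1(\M')$, and (after the localization above) weak continuity gives $\Det df_n\weakly\Det df$ in $L^{p/d}(\M')$; uniqueness of weak limits forces $\Det df=1$ a.e.\ directly, with no Jensen needed. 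One should also make the sign explicit: $\dist(df_n,\SO{\g,f_n^*\h})\to 0$ bounds $|\Det df_n-1|$ pointwise (the paper's Lemma~\ref{lem:bound_vol_distortion_dist_SO1}), which in particular kills the orientation-reversing branch.

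A smaller point: invoking the weak Piola identity \eqref{eq:Cof_df_orthogonal_extrinsic_a} to conclude that $F$ is weakly harmonic is redundant in your scheme. Weak harmonicity of $F$ is not used in steps (i)--(iii), and once you establish $df\in\SO{\g,f^*\h}$ a.e.\ you get $f\in W^{1,\infty}(\M;\N)$ and Theorem~\ref{thm:Liouville_M_N_Lipschitz} applies directly (it reproves exactly that harmonicity internally). Dropping this detour also removes the spurious $p\ge 2(d-1)$ constraint, leaving only $p>d$, which matches the paper's threshold before truncation. For the remaining range $1\le p\le d$, a truncation argument as in \cite{FJM02b,LP10} is indeed the way to go, as you note. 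Finally, the ``moreover'' part via the degree formula $\deg(f)\Volh\N=\int_\M f^*\dVolh$ works in principle, but needs some care for manifolds with boundary; the paper's argument via a clopen-set argument in $\N^\circ$ together with the area formula is more elementary and avoids degree theory, and you must also supply the volume-equality Step~VI for the diffeomorphism case. The Vitali claim for $p=1$ strong convergence is also not immediate, since you do not have a.e.\ convergence of $dF_n$; the paper handles strong convergence uniformly for all $p$ via a truncated Young-measure test function.
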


Before giving a sketch of the proof, we explain how we measure the distance of $df$ from $\SO{\g,f^*\h}$ for a given mapping $f:\M \to \N$.
Recall that for $x\in\M$, $\SO{\g,f^*\h}_x=\SO{\g_x,\h_{f(x)}}$; the Riemannian metrics on $\M,\N$ induce an inner-product on $T_x^*\M\otimes T_{f(x)}\N \simeq \text{Hom}(T_x\M,T_{f(x)}\N)$. We measure the distance $df_x$ from $\SO{\g,f^*\h}_x$ using the distance induced by this inner product.

Fixing orthonormal frames in $T_x\M$ and $T_{f(x)}\N$, this reduces to the standard Euclidean distance from $\SO{d}$; 
an expression in local coordinates can be obtained as follows:
fix local coordinates at $\M$ at $x$ and at $\N$ at $f(x)$.
Denote by $\g,\h$ the coordinate representations of the Riemannian metrics and by $\sqrt{\g},\sqrt{h}$ their unique symmetric positive-definite square roots.
Then, $\sqrt{\g} \in \SO{\g,\euc}$ and $\sqrt{\h} \in \SO{\h,\euc}$.
A straightforward calculation shows that,
\[
\dist(df,\SO{\g,f^*\h})|_x = \dist\brk{\sqrt{\h(f(x))}\circ d f(x) \circ \sqrt{\g^{-1}(x)} ,\SO{d}},
\]
where on the right-hand side we use the standard Euclidean distance between matrices.
Note that this expression is valid at $x$; it can be extended to a neighborhood of $x$ only if $f$ is "localizable", see~\cite{LS14} (for example if $f$ is continuous), not necessarily for every $f\in W^{1,p}(\M;\N)$.

\paragraph{Sketch of proof}
We present a rough sketch of the proof, emphasizing its main ideas; applications of the theorem are discussed farther below.

As a starting point, note the following well-known linear algebraic fact: 
$A\in \SO{d}$ if and only if $\det A = 1$ and $\cof A=A$, where $\cof A$ is the cofactor matrix of $A$, i.e., the transpose of its adjugate.
This fact can be reformulated for mappings between manifolds: 
$df\in \SO{\g,f^*\h}$ if and only if $\Det df=1$ and $\Cof df =df$, where $\Det$ and $\Cof$ are intrinsic determinant and cofactor operators (see Section~\ref{sec:det_cof} for details).

The assumptions on the sequence $(f_n)$ imply that it is precompact in the weak $W^{1,p}$-topology.
However, the direct method of the calculus of variations cannot be used to deduce that a limit function $f$ is an isometric immersion, since the functional \eqref{eq:the_functional} is not lower-semicontinuous with respect to the weak $W^{1,p}$-topology.
Instead, we follow the ideas behind the proof of \cite{JK90} to Reshetnyak's (Euclidean) rigidity theorem; 
we use Young measures 
to show that any weak limit $f$ of $(f_n)$ must satisfy $\Det df=1$ and $\Cof df =df$ a.e., hence $df\in \SO{\g,f^*\h}$ a.e.

The generalization of \cite{JK90} is not straightforward. The fundamental theorem of Young measures applies to sequences of vector-valued functions. A generalization of this theory to sections of a \emph{fixed} vector bundle is relatively straightforward (see Section~\ref{sec:Young_meas} for details). In our case, however,  $df_n$ is a section of $\TstarM\otimes f_n^*T\N$, i.e., every $df_n$ is a section of a \emph{different} vector bundle. Trying to overcome this difficulty by the standard procedure of embedding $\N$ isometrically into a  high-dimensional Euclidean space $\R^D$ (so that all $df_n$ become sections of the same vector bundle $\TstarM\otimes \R^D$) does not solve the problem, because information about orientation is lost (as discussed below,  \thmref{thm:Reshetnyak_M_N} does not hold if $\SO{\g,f_n^*\h}$ is replaced by $\O{\g,f_n^*\h}$). This difficulty is overcome by a combination of extrinsic (embedded) and intrinsic (local) treatments of $\N$ in different parts of the argument.

In addition, this generalization of \cite{JK90} only works for $p>d$, otherwise $\Det df_n\not\weakly \Det df$, and even worse, the use of local coordinates in the intrinsic analysis is impossible. 
To encompass the case $1\le p\le d$, we use a truncation argument from \cite{FJM02b,LP10}, adapted to our setting.

Having obtained that $df\in \SO{\g,f^*\h}$ a.e., we use Theorem~\ref{thm:Liouville_M_N_Lipschitz} to obtain that $f$ is a smooth isometric immersion.
The proof of Theorem~\ref{thm:Reshetnyak_M_N} is completed by showing that $f_n\to f$ in the strong (rather than weak) $W^{1,p}(\M;\N)$ topology  (using Young measures again). 
This stronger convergence, along with the conditions that $f_n(\pl \M)\subset \pl \N$ and $\Volg\M=\Volh\N$, imply that $f$ is an isometry. 
Note that this last part has no equivalence in the Euclidean version of Reshetnyak's theorem.

\subsection{Applications}

We present two applications of Theorem~\ref{thm:Reshetnyak_M_N}.
The first is in the field of \emph{non-Euclidean elasticity} (also known as \emph{incompatible elasticity}),
which is a branch of non-linear elasticity concerned with elastic bodies that do not have a reference configuration, i.e., a stress-free  state.
Such bodies are typically modeled as Riemannian manifolds $(\M,\g)$, and the ambient space is another manifold $(\N,\h)$ of the same dimension.
A body does not have a reference configuration if $(\M,\g)$ cannot be isometrically embedded in $(\N,\h)$.
For example, if $\g$ is non-flat and $\N=\R^d$, $\M$ does not have a reference configuration.
Such ``intrinsically curved" elastic bodies are very common in many physical and biological models, usually due to material defects or inhomogeneous shrinkage or growth; these change the equilibrium distances between adjacent material points, resulting in an intrinsic non-Euclidean geometry. 
The ambient space may be curved if the elastic body is constrained to some curved space, or in general relativistic applications.
Recent examples for non-Euclidean elastic problems in the physics literature can be found in \cite{KES07,SRS07,ESK09,OY09,KVS11,DCGRLL13,ESK13,Efr15,AKMMS16}, and in the mathematical literature in \cite{LP10,KS12,LRR,BLS16,ALL17,KOS17,Olb17,KO18} (this is by no means a comprehensive list).

The elastic energy associated with a configuration $f:\M\to \N$, 
\beq
\label{eq:elastic_energy}
E_{\M,\N}(f) = \int_\M W(df)\, \dVolg
\eeq
is model-dependent, however it typically admits a lower bound
\beq
\label{eq:energy_bound}
W(df) \ge C\dist^p(df,\SO{\g,f^*\h})
\eeq
for some exponent $p\ge 1$ (usually $p=2$), and $C>0$.

A natural question in this context is whether a ``geometric incompatibility"---that the body manifold cannot be isometrically immersed in the space manifold (i.e., the lack of a reference configuration)---is equivalent to  an ``energetic incompatibility"---that the elastic energy is bounded away from zero.
An immediate corollary of Theorem~\ref{thm:Reshetnyak_M_N} answers this affirmatively:

\begin{corollary}
\label{cr:non_trivial_inf_energy_in_the_elastic_functional}
Let $\M$ be a compact $d$-dimensional manifold with boundary, and let $\N$ be either $\R^d$, or a compact $d$-dimensional manifold with boundary.  If $\M$ is not isometrically immersible in $\N$, then
\[
\inf_{f\in W^{1,p}(\M;\N)} E_{\M,\N}(f) > 0
\]
whenever $E_{\M,\N}$ satisfies \eqref{eq:energy_bound} for some $p\ge 1$.
\end{corollary}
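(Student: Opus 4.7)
The plan is to argue by contradiction. Suppose $\inf_{f\in W^{1,p}(\M;\N)} E_{\M,\N}(f)=0$ and let $(f_n)\subset W^{1,p}(\M;\N)$ be a minimizing sequence. The lower bound \eqref{eq:energy_bound} then forces
\[
\int_\M \dist^p(df_n,\SO{\g,f_n^*\h})\,\dVolg \;\longrightarrow\; 0,
\]
so \eqref{eq:Lpconv_of_dist} holds and the hypothesis of \thmref{thm:Reshetnyak_M_N} is met. When $\N$ is compact with boundary, \thmref{thm:Reshetnyak_M_N} immediately produces a subsequence converging in $W^{1,p}(\M;\N)$ to a smooth isometric immersion $f:\M\to\N$, contradicting the assumption that $\M$ admits no isometric immersion into $\N$.

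The case $\N=\R^d$ is not covered directly by \thmref{thm:Reshetnyak_M_N}, and the main step is to reduce it to the compact case. Since post-composition with a translation of $\R^d$ leaves the distortion unchanged, we may translate each $f_n$ so that $\int_\M f_n\,\dVolg=0$. Every element of $\SO{\g_x,\euc}$ has fixed norm $\sqrt d$ with respect to the inner product on $T_x^*\M\otimes\R^d$ induced by $\g$ and $\euc$, so the $L^p$-smallness of the distortion gives a uniform $L^p$-bound on $df_n$; Poincaré's inequality on $(\M,\g)$ then yields a uniform $W^{1,p}$-bound on $f_n$. To apply \thmref{thm:Reshetnyak_M_N} with a compact target, it suffices to confine the renormalized sequence to a common closed ball $\overline{B_R}\subset\R^d$. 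For $p>d$ this is immediate from the Sobolev embedding $W^{1,p}(\M;\R^d)\hookrightarrow L^\infty(\M;\R^d)$. For $1\le p\le d$, I would invoke the same Lipschitz truncation from \cite{FJM02b,LP10} that is already used inside the proof of \thmref{thm:Reshetnyak_M_N}, replacing $f_n$ by uniformly Lipschitz modifications $\tilde f_n$ which agree with $f_n$ off a set of vanishing measure, still obey \eqref{eq:Lpconv_of_dist}, and after normalization of the mean land in a fixed closed ball. Applying \thmref{thm:Reshetnyak_M_N} with $\N$ replaced by $\overline{B_R}$ (equipped with the Euclidean metric) yields a smooth isometric immersion $\M\to\overline{B_R}\hookrightarrow\R^d$, contradicting the hypothesis once more.

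The only step that is not fully routine is the $L^\infty$-confinement in the $\R^d$ case when $1\le p\le d$; everything else is a short extraction of a subsequence from \thmref{thm:Reshetnyak_M_N} combined with the definition of the elastic energy functional.
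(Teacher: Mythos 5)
Your proof is correct. For compact $\N$ the conclusion is indeed an immediate consequence of \thmref{thm:Reshetnyak_M_N}. For $\N=\R^d$ the paper does not spell out the argument---it cites \cite{LP10} and separately records \corrref{cor:Reshetnyak_M_R_d}, the Euclidean-target analogue of \thmref{thm:Reshetnyak_M_N} (stated there for $p>1$ with mean-normalized sequences). Your treatment takes a different and quite clean route: you push the Euclidean case back into the compact one by recentering and truncating so the sequence lands in a fixed closed ball $\overline{B_R}\subset\R^d$, after which \thmref{thm:Reshetnyak_M_N} applies verbatim with $\N=\overline{B_R}$. The truncation mechanism---choosing a fixed $\lambda>2\sqrt d$ so that $|A|<2\dist(A,\SO{\g,\euc})$ on $\{|A|>\lambda\}$, hence $\Volg\{\tilde f_n\ne f_n\}\to 0$ and $\|\dist(d\tilde f_n,\SO{\g,\euc})\|_{L^p}\to0$---is exactly Step~IV of the theorem's proof, so you are not introducing new machinery. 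An advantage of your confinement argument is that it also covers $p=1$, where \corrref{cor:Reshetnyak_M_R_d} as stated does not directly apply. Two small points worth making explicit: (i) after recentering, the uniform Lipschitz bound together with $\int_\M\tilde f_n\,\dVolg=0$ gives $\|\tilde f_n\|_\infty\le C\lambda\,\diam\M$, which fixes $R$ independently of $n$; (ii) the resulting smooth isometric immersion $f:\M\to\overline{B_R}$ sends $\M^\circ$ into the open ball since $df$ has full rank, so it is genuinely an isometric immersion into $\R^d$, yielding the contradiction.
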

For $\N=\R^d$, this result was obtained in \cite[Theorem 2.2]{LP10} using different methods.

A second application of Theorem~\ref{thm:Reshetnyak_M_N} is concerned with notions of convergence of Riemannian manifolds that arise in the study of homogenization of defects \cite{KM15,KM15b} and in structural stability of non-Euclidean elasticity \cite{KM16}.
A sequence of Riemannian manifolds $(\M_n,\g_n)$ converges to a Riemannian manifold $(\M,\g)$
if (up to some additional assumptions) there exist diffeomorphisms $F_n:\M\to \M_n$, such that 
\[
\dist(dF_n,\SO{\g,F_n^*\g_n})\to 0 \qquad\text{in $L^p(\M)$},
\]
and similarly for $F_n^{-1}$ (in other words, if the infimum elastic energy between $\M_n$ and $\M$ vanishes asymptotically).
However, in these works additional assumptions were made in order to guarantee the well-definiteness of the limit, that is, its independence on the choice of $F_n$.
In Section~\ref{sec:applications} we show that Theorem~\ref{thm:Reshetnyak_M_N} implies that such a notion of convergence is well-defined for $p$ large enough, without any further assumptions. 
Moreover, we present some examples, showing that this notion of metric convergence can be substantially different from Gromov--Hausdorff convergence.

\paragraph{The role of orientation}

Liouville's theorem for smooth mappings holds if $\SO{\g,f^*\h}$ is replaced with $\O{\g,f^*\h}$: indeed, a $C^1(\M;\N)$ mapping, which is everywhere a local isometry, is either globally orientation-preserving or globally orientation-reversing, which is equivalent in either case to the differential being in $\SO{\g,f^*\h}$.
However, both for Lipschitz mappings, and for asymptotically-rigid mappings, Liouville's and Reshtnyak's theorems do not hold if $\SO{\g,f^*\h}$ is replaced with $\O{\g,f^*\h}$  (even in a Euclidean setting, as exemplified by the map $x\mapsto |x|$ on the real line).

The reason for the breakdown of both rigidity theorems is the following:
maps whose differentials switch between the two connected components of $\O{\g,f^*\h}$ can be highly irregular, since $\O{\g,f^*\h}$ is rank-one connected. 
For example, it was proved in \cite{Gro86}, using methods of convex integration, that given an arbitrary metric $\g$ on the $d$-dimensional closed disc $\calD^d$, there exists a mapping $f\in W^{1,\infty}(\calD^d,\R^d)$, such that $f^\star \euc = \g$ a.e. (i.e., $df\in \O{\g,\euc}$ a.e.); see also \cite[Remark 2.1]{LP10}.
It follows that a functional such as
\beq
f\mapsto \int_\M |\g - f^*\h|^p\, \dVolg,
\label{eq:not_the_functional}
\eeq
which does not account for orientation, is not a good measure of distortion, even though at first sight, it might seem more natural than 
\beq
f\mapsto \int_\M \dist^p(d f, \SO{\g,f^*\h}) \, \dVolg.
\label{eq:the_functional}
\eeq
These difficulties only arise when mappings can switch orientations;  
the results of this paper hold if $\SO{\g,f^*\h}$ in \eqref{eq:the_functional} is replaced with $\O{\g,f^*\h}$ or with \eqref{eq:not_the_functional}, but the mappings are restricted to (local) diffeomorphisms.

\paragraph{Open questions}
\begin{enumerate}
\item
A discussion of generalizations of Liouville's rigidity theorem cannot be complete without mentioning the far-reaching result of \cite{FJM02b}, which is a quantitative version of Reshetnyak's theorem:
\begin{quote}
\emph{
Let $\Omega\subset \R^d$ be an open, connected Lipschitz domain, and let $1< p<\infty$.
Then, there exists a constant $C>0$ such that for every $f\in W^{1,p}(\Omega;\R^d)$ there exists an affine map $\tilde f$ such that
\[
\| f -\tilde f\|_{W^{1,p}(\Omega;\R^d)}^p \le C \int_\Omega \dist^p(df, \SO{d})\,dx.
\]
}
\end{quote}
This theorem has been generalized in various ways, see e.g.~\cite{Lor16,CM16} and the references therein.
All these generalizations are in Euclidean settings.
A natural question is whether this theorem can be generalized to mappings between Riemannian manifolds. 

\item
While the results of this paper imply that for $\M$ not isometrically immersible into $\N$,
\[
\inf_{f\in W^{1,p}(\M;\N)} \int_\M \dist_{(\g,f^*\h)}^p(df,\SO{\g,f^*\h})\,\dVolg > 0,
\]
they do not provide an estimate on how large this infimum is.
Since the local obstruction to isometric immersibility can be related to a mismatch of curvatures, one would expect curvature-dependent lower bounds. 
Some results in this direction exist for $\N=\R^d$ \cite{KS12}, and some asymptotics for thin manifolds appear in \cite{BLS16,LRR,MS18}, however, the general picture is still widely open. 

\item
In this paper, we assume for simplicity that the Riemannian metrics $\g$ and $\h$ are smooth;
all the results hold for metrics of class $C^{1,\alpha}$.
It is of interest whether our results can be extended to less regular metrics including singularities.
In the context of the convergence of manifolds presented in Section~\ref{sec:applications}, an important example is the convergence  of locally-flat surfaces with conic singularities  \cite{KM15b,KM16}. 
The uniqueness of the limit in such cases is yet to be established.
\end{enumerate}

\paragraph{Structure of this paper} 
In Section~\ref{sec:harmonic_analysis} we define the intrinsic notions of determinant and cofactor used throughout the paper, and state their important properties (for completeness, some properties whose proofs are more difficult to find in this generality are proven in the appendix). 
We then prove the strong and weak formulations of the Piola identity.
In Section~\ref{sec:asymptotic_rigidity}, we prove the generalization of Reshetnyak's asymptotic rigidity theorem for mappings between Riemannian manifolds (Theorem~\ref{thm:Reshetnyak_M_N}).
In Section~\ref{sec:applications}, we present the above-mentioned application of Theorem~\ref{thm:Reshetnyak_M_N} to the  convergence of manifolds.


\section{The Piola identity for Riemannian manifolds}
\label{sec:harmonic_analysis}

In this section, we derive the strong and weak formulations of the Piola identity between general Riemannian manifolds (Proposition~\ref{prop:divCof=0} and Theorem~\ref{thm:weak_Piola_identity}).
We start by defining determinant and  cofactor operators between general oriented inner-product spaces (Section~\ref{sec:det_cof}).
Given these definitions, one can prove the strong Piola identity \eqref{eq:Piola_id_general_strong} by a direct calculation in local coordinates, or using vector-valued forms; both of these proofs involve lengthy calculations, and are not very illuminating.
Here we take a more conceptual route, showing that the Piola identity is in fact the Euler-Lagrange equation of a null-Lagrangian (Sections~\ref{sec:Null_Lag} and \ref{sec:Piola_id_strong}).
After deriving in Section~\ref{sec:Piola_id_weak} a weak form of the Piola identity, we formulate, for completeness, in Section~\ref{sec:Piola_id_coor} both forms of the Piola identity in local coordinates.

\subsection{Intrinsic determinant and cofactor}
\label{sec:det_cof}

\begin{definition}[determinant]
\label{def:intrinsic_det}
Let $V$ and $W$ be $d$-dimensional, oriented, inner-product spaces. Let $\star_V^k:\Lambda_k(V)\to\Lambda_{d-k}(V)$ and $\star_W^k:\Lambda_k(W)\to\Lambda_{d-k}(W)$ be their respective Hodge-dual operators. Let $A\in\Hom(V,W)$.
The \textbf{determinant} of $A$, $\Det A\in \R$, is defined by
\[
\Det A := \star^d_W \circ \extp^d A \circ \star^0_V,
\]
where $\extp^d A = A \wedge \ldots \wedge A$, $d$ times, and we identify $\extp^0 V \simeq \extp^0 W \simeq \R$.
\end{definition}

This definition of the determinant matches the definition of the determinant of the matrix representing $A$ with respect to any positively-oriented orthonormal bases of $V$ and $W$.
In particular, let  $(\M,\g)$ and $(\N,\h)$ be oriented $d$-dimensional Riemannian manifolds.
We denote by $\star_\M^k:\Lambda_k(T\M)\to\Lambda_{d-k}(T\M)$ and $\star_\N^k:\Lambda_k(T\N)\to\Lambda_{d-k}(T\N)$ the Hodge-dual operators of the tangent bundles (note that the Hodge-dual in Riemannian settings usually applies to the exterior algebra of the cotangent bundle).
Let $f:\M\to \N$ be a differentiable mapping. 
Then,
\[
\Det df = \star^d_\N \circ \extp^d df \circ \star^0_\M = \frac{f^\star \dVolh}{\dVolg}.
\]
The last equality, as some other properties of the determinant are detailed in Appendix~\ref{app:det_cof}.

\begin{definition}[cofactor operator]
\label{def:intrinsic_cof}
Let $V$ and $W$ be $d$-dimensional, oriented, inner-product spaces.
Let $A\in\Hom(V,W)$.
The cofactor  of $A$, $\Cof A \in\Hom(V,W)$, is defined by
\[
\Cof A := (-1)^{d-1} \star_W^{d-1} \circ \extp^{d-1} A \circ \star_V^1,
\]
where we identify $\extp^1 V \simeq V$ and $\extp^1 W \simeq W$.
\end{definition}

Properties of the cofactor are presented in Appendix~\ref{app:det_cof}.
In particular, we prove the following identity, which is an intrinsic version of well-known properties of the matrix-cofactor:
\[
\Det A \, \id_V = A^T \circ \Cof A = (\Cof A)^T \circ A.
\]

An immediate corollary is:

\begin{corollary}
\label{cor:detA_eq_1_cofA_eq_A}
Let $V$ and $W$ be $d$-dimensional, oriented, inner-product spaces.
Let $A\in\Hom(V,W)$.
Then $A\in\SO{V,W}$ if and only if $\Det A=1$ and $\Cof A = A$.
\end{corollary}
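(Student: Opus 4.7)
The plan is to deduce everything from the algebraic identity stated just above the corollary,
\[
\Det A \, \id_V = A^T \circ \Cof A = (\Cof A)^T \circ A,
\]
which is an intrinsic version of the classical matrix-cofactor relation and is proved in Appendix~\ref{app:det_cof}. Given this identity, the corollary becomes a one-line computation in each direction.

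For the ``only if'' direction, suppose $A \in \SO{V,W}$. By definition this means $A$ is an orientation-preserving linear isometry between the two oriented inner-product spaces; expressing $A$ in positively-oriented orthonormal bases of $V$ and $W$ gives a matrix in $\SO{d}$, so $\Det A = 1$ (this matches the coordinate interpretation of $\Det$ recorded just after Definition~\ref{def:intrinsic_det}). The isometry property gives $A^T \circ A = \id_V$ and $A \circ A^T = \id_W$. Substituting $\Det A = 1$ into the identity yields $A^T \circ \Cof A = \id_V$; composing with $A$ on the left and using $A \circ A^T = \id_W$ then gives $\Cof A = A$.

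For the ``if'' direction, assume $\Det A = 1$ and $\Cof A = A$. Plugging these into the identity produces $\id_V = A^T \circ A$, so $A$ is a linear isometry $V \to W$ (hence bijective, since $\dim V = \dim W$). Because $\Det A = 1 > 0$, $A$ carries a positively-oriented orthonormal basis of $V$ to a positively-oriented orthonormal basis of $W$, and thus $A \in \SO{V,W}$.

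I do not expect a real obstacle here: all the substantive work is packaged in the cofactor identity $A^T \circ \Cof A = \Det A \cdot \id_V$, whose proof is deferred to the appendix. The only mild care needed is to verify the intrinsic determinant of an element of $\SO{V,W}$ is indeed $+1$, which follows directly from Definition~\ref{def:intrinsic_det} by choosing positively-oriented orthonormal bases on both sides.
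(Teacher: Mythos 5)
Your proof is correct and takes exactly the route the paper intends: the paper presents the corollary as an ``immediate corollary'' of the identity $\Det A\,\id_V = A^T\circ\Cof A = (\Cof A)^T\circ A$ (Proposition~\ref{pn:Cofactor_formula}), and you carry out precisely the short two-sided computation that makes this immediate.
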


In the context of differentiable mappings, $f:\M\to\N$, Corollary~\ref{cor:detA_eq_1_cofA_eq_A} implies that

\begin{corollary}
\label{cor:DetAndCof}
Let $(\M,\g)$ and $(\N,\h)$ be oriented $d$-dimensional manifolds. Then,
\[
df\in \SO{\g,f^*\h}
\]
if and only if
\[
\Det df = 1
\textand
\Cof df = df.
\] 
\end{corollary}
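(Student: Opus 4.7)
The plan is to observe that Corollary~\ref{cor:DetAndCof} is nothing more than a pointwise-in-$x$ application of the purely linear-algebraic Corollary~\ref{cor:detA_eq_1_cofA_eq_A}. For each $x\in\M$, the tangent space $T_x\M$ is a $d$-dimensional oriented inner-product space under $\g_x$, and $T_{f(x)}\N$ is likewise under $\h_{f(x)}$; the differential $df_x\in \Hom(T_x\M, T_{f(x)}\N)$ is then precisely the type of object to which Definitions~\ref{def:intrinsic_det}--\ref{def:intrinsic_cof} apply. The determinant and cofactor operators on sections are defined exactly so that $(\Det df)(x) = \Det(df_x)$ and $(\Cof df)(x) = \Cof(df_x)$, which reduces the corollary to a fiberwise assertion.

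By the convention $\SO{\g,f^*\h}_x = \SO{\g_x,\h_{f(x)}}$ set out after Theorem~\ref{thm:Liouville_M_N_Lipschitz}, the condition $df\in\SO{\g,f^*\h}$ means that for every $x\in\M$, the linear map $df_x$ is an orientation-preserving isometry between the oriented inner-product spaces $(T_x\M,\g_x)$ and $(T_{f(x)}\N,\h_{f(x)})$. Applying Corollary~\ref{cor:detA_eq_1_cofA_eq_A} with $V = T_x\M$, $W = T_{f(x)}\N$, and $A = df_x$ then converts this condition into the conjunction $\Det df_x = 1$ and $\Cof df_x = df_x$, which is exactly the right-hand side of the claimed equivalence.

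Because both the hypothesis and the conclusion are quantified pointwise over $\M$, and the pointwise equivalence holds at every $x$, the corollary follows with no additional analytic input. There is no substantive obstacle: the content of the statement is entirely absorbed by Corollary~\ref{cor:detA_eq_1_cofA_eq_A}. Its role in the paper is merely to translate the linear-algebraic characterization of $\SO{d}$ into the language of differentials of maps between Riemannian manifolds, so that the Piola identity $\delta_\nabla \Cof df = 0$ can be paired with an almost-everywhere rigidity assumption on $df$ to force $\Cof df = df$ and hence yield weak harmonicity of $f$.
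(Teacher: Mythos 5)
Your proposal is correct and matches the paper exactly: the paper states Corollary~\ref{cor:DetAndCof} as an immediate pointwise consequence of Corollary~\ref{cor:detA_eq_1_cofA_eq_A}, applied fiberwise with $V=T_x\M$, $W=T_{f(x)}\N$, $A=df_x$, and gives no further proof. Your write-up simply makes that implicit fiberwise reduction explicit, which is the intended reading.
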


\begin{remark}
\begin{enumerate}
\item 
For $d>2$, it can easily be checked that $A\in\SO{V,W}$ if and only if $\Cof A = A\ne 0$ (the condition on the determinant is satisfied automatically).
For $d=2$, $\Cof:\operatorname{Hom}(V,W)\to \operatorname{Hom}(V,W)$ is a linear operator;
the set $\BRK{\Cof A = A}$
is a linear subspace, consisting of all weakly conformal maps, i.e.~the maps $\lambda R$ for $\lambda\ge 0$ and $R\in \SO{V,W}$.
Therefore, we can apply  Theorem~\ref{thm:weak_Piola_identity}  in two dimensions  to weakly conformal maps, rather than to isometries, resulting in an equivalent of Corollary~\ref{cor:Liouville_M_N_Lipschitz} for weakly conformal maps: 
Let $f\in W^{1,p}(\M;\N)$, $\dim \M=\dim \N=2$ and let $p>2$. If $df$ is a weakly conformal map a.e., then $f$ is weakly-harmonic, and in particular smooth.
This is a known result \cite[Section 2.2, Example 11]{HW08}, \cite{LS14}; we mention it here as another example of the usefulness of the Riemannian version of Piola's identity.

\item 
  The characterization of isometries through cofactors (\corrref{cor:detA_eq_1_cofA_eq_A}) and the role of dimension can be illuminated by the following simple heuristic:  
  $\Cof A$ defines the action of $A$ on $(d-1)$-dimensional parallelepipeds, i.e., it determines volume changes of $(d-1)$-dimensional shapes, whereas, $A$ determines volume changes of $1$-dimensional shapes (lengths). When $d-1\ne 1$, the condition $\Cof A = A$ implies that metric changes in two different dimensions are fully correlated, which is a rigidity constraint, forcing $A$ to be either trivial, or an isometry (cf. $r^{d-1}=r$ if and only if $r=0$ or $r=1$).
  \end{enumerate}
\end{remark}

\subsection{Null-Lagrangians} 
\label{sec:Null_Lag}
A functional $E$ is a \emph{null-Lagrangian} if every smooth map is a critical point of $E$, with respect to variations that do not alter the boundary.

\begin{lemma}
\label{lem:integrals_homotopic}
Let $\M,\N$ be smooth manifolds of dimensions $m,n$ respectively, $\M$ compact and oriented ($\M$ and $\N$ can both have boundaries). Let $\omega \in \Omega^m(\N)$ be closed. 
Let $f_0, f_1:\M\rightarrow \N$ be smooth maps which are homotopic relative to $\partial \M$.
Then
 \[
\int_{\M}f_0^*\omega=\int_{\M} f_1^*\omega.
 \]
\end{lemma}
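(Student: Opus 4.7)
The plan is to run the standard Stokes' theorem argument on the homotopy itself. Let $H: \M \times [0,1] \to \N$ be a smooth homotopy, relative to $\pl \M$, with $H(\cdot,0) = f_0$ and $H(\cdot,1) = f_1$. Since $\omega$ is closed, $d(H^*\omega) = H^*d\omega = 0$, so $H^*\omega$ is a closed $m$-form on the oriented $(m+1)$-dimensional manifold-with-corners $\M \times [0,1]$. I would then apply Stokes' theorem to the $(m+1)$-form $d(H^*\omega)$ on $\M \times [0,1]$ to get
\[
0 = \int_{\M \times [0,1]} d(H^*\omega) = \int_{\pl(\M \times [0,1])} H^*\omega.
\]

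The boundary of $\M \times [0,1]$ decomposes (with the induced orientation) as $\M \times \{1\} \;-\; \M \times \{0\} \;\cup\; (\pl \M) \times [0,1]$. The restrictions to the top and bottom contribute $\int_\M f_1^*\omega - \int_\M f_0^*\omega$, so the whole argument reduces to showing that the integral over the lateral boundary $(\pl \M) \times [0,1]$ vanishes.

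For this last step I would use the relative-homotopy assumption. Because $H$ is constant in $t$ on $\pl \M$, the restriction $H|_{(\pl \M) \times [0,1]}$ factors as $f_0 \circ \pi$, where $\pi:(\pl \M) \times [0,1] \to \pl \M$ is the projection. Hence $H^*\omega$ restricted to $(\pl \M) \times [0,1]$ equals $\pi^*(f_0^*\omega|_{\pl \M})$. But $f_0^*\omega|_{\pl \M}$ is an $m$-form on the $(m-1)$-dimensional manifold $\pl \M$, and so vanishes identically, which makes its pullback along $\pi$ zero as well. Combining this with the boundary decomposition yields $\int_\M f_0^*\omega = \int_\M f_1^*\omega$.

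The only delicate point I anticipate is the orientation/sign bookkeeping on the corners of $\M \times [0,1]$, and verifying that Stokes' theorem applies to a manifold-with-corners when $\M$ itself has a $C^1$ boundary; both are standard but worth checking carefully. No smoothness issue arises since we are assuming $f_0, f_1$ and the homotopy $H$ are smooth, so $H^*\omega$ is a genuine smooth form and Stokes applies directly.
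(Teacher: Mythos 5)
Your argument is the same as the paper's: pull back $\omega$ along the homotopy $H$ to $\M\times[0,1]$, apply Stokes to $d(H^*\omega)=H^*d\omega=0$, and observe that the lateral boundary contribution vanishes because $H$ is constant in $t$ on $\pl\M$. Your factorization $H|_{\pl\M\times[0,1]}=f_0\circ\pi$, forcing the restricted form to pull back from the $(m{-}1)$-dimensional $\pl\M$, is a slightly more explicit justification of the same observation the paper makes in one line, and the manifold-with-corners point you flag is indeed the standard (and unproblematic) caveat.
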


\begin{proof}
Let $F:\M\times I\rightarrow \N$ be a smooth homotopy between $f_0$ and $f_1$ relative to $\pl \M$, i.e.~$f_t|_{\partial \M}=f_0|_{\partial \M}$ for every $t$. 
Since $d$ commutes with pullbacks, Stokes theorem implies that
\[
\begin{split}
 0 &= \int_{\M\times I} F^\ast d\omega = \int_{\M\times I} dF^\ast \omega= \int_{\partial(\M\times I)} F^\ast \omega \\ &= \int_{\M\times\{1\}} F^\ast \omega - \int_{\M\times\{0\}}F^\ast \omega + \int_{\partial \M \times (0,1)} F^\ast \omega \\ &= \int_{\M} f_1^\ast\omega - \int_{\M}f_0^\ast \omega,
 \end{split}
 \]
where $ \int_{\partial \M \times (0,1)} F^\ast \omega=0$ follows from the fact the homotopy respects the boundary, hence, the restriction of $F^\ast\omega$ to $\partial \M \times (0,1)$ is identically zero.
\end{proof}

\begin{corollary}[Pullbacks of closed forms are null Lagrangians]
\label{cor:Null-Lag1}
Let $\M,\N$ and $\omega$ be as in \lemref{lem:integrals_homotopic}. Let $E:C^{\infty}(\M,\N) \to \R$ be defined by $E(f)=\int_{\M} f^*\omega$.  Then $E$ is a null-Lagrangian. 
\end{corollary}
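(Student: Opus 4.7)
The plan is to deduce the corollary directly from \lemref{lem:integrals_homotopic}. By definition, to say that $E$ is a null-Lagrangian is to say that for any smooth $f \in C^\infty(\M,\N)$ and any smooth one-parameter family $\{f_t\}_{t \in (-\e,\e)}$ with $f_0 = f$ and $f_t|_{\partial \M} = f|_{\partial \M}$ for all $t$, the derivative $\frac{d}{dt}\big|_{t=0} E(f_t)$ vanishes.

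The first observation is that such a variation is, tautologically, a smooth homotopy relative to $\partial \M$ between $f_0$ and any $f_t$: the map $F(x,s) := f_s(x)$ is smooth, and $F(x,s) = f(x)$ for $x \in \partial \M$ and all $s$. Hence, for each fixed $t$, \lemref{lem:integrals_homotopic} applies to the pair $f_0, f_t$ and yields
\[
E(f_t) = \int_\M f_t^\ast \omega = \int_\M f_0^\ast \omega = E(f_0).
\]
Thus $t \mapsto E(f_t)$ is constant on $(-\e,\e)$, so its derivative at $t = 0$ is zero. Since $f$ and the variation were arbitrary, $E$ is a null-Lagrangian.

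There is essentially no obstacle here: all the real content was extracted in \lemref{lem:integrals_homotopic} (Stokes' theorem plus closedness of $\omega$ plus the boundary condition on $F$). The corollary is just the observation that a boundary-preserving variation is a special case of a homotopy rel boundary, so the integral is not merely stationary but actually locally constant along such variations.
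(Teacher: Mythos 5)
Your proof is correct and takes essentially the same route as the paper: observe that a boundary-preserving variation is a homotopy relative to $\partial\M$, invoke \lemref{lem:integrals_homotopic} to conclude $E(f_t)=E(f_0)$ for all $t$, and note that a constant function has vanishing derivative. No difference in substance.
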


\begin{proof}
Let $f_t:\M \to \N$ be a smooth variation relative to $\pl \M$ of $f_0=f$. By \lemref{lem:integrals_homotopic}, 
$E(f_t)=E(f_0)$, so $E(f_t)$ is constant. 
\end{proof}

In the case where $n=m$, every $m$-form on $\N$, and in particular the volume form, is closed. Hence:
\begin{corollary}
\label{cor:Null-Lag2}
Let $\M$ and $\N$ be $d$-dimensional, smooth, oriented Riemannian manifolds, with $\M$ compact; Then, the functional $E:C^{\infty}(\M,\N) \to \R$ defined by
\beq
\label{eq:Jabcobian_functional} 
E(f)=\int_{\M} f^* \dVolh=\int_{\M} \Det df \,\dVolg
\eeq
 is a null-Lagrangian. 
\end{corollary}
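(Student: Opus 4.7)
The plan is to derive this immediately from Corollary~\ref{cor:Null-Lag1} applied to $\omega = \dVolh$, together with the identity $f^*\dVolh = \Det df \cdot \dVolg$ already recorded after Definition~\ref{def:intrinsic_det}.

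First I would verify that $\dVolh$ is closed. Since $\N$ is $d$-dimensional and $\dVolh \in \Omega^d(\N)$, the form $d\,\dVolh$ is a $(d+1)$-form on a $d$-dimensional manifold, hence identically zero. Thus $\dVolh$ is trivially closed, and Corollary~\ref{cor:Null-Lag1} (with $m=n=d$) applies to give that the functional $f \mapsto \int_\M f^*\dVolh$ is a null-Lagrangian on $C^\infty(\M,\N)$.

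Next I would identify the two expressions in \eqref{eq:Jabcobian_functional}. From the discussion following Definition~\ref{def:intrinsic_det}, for any differentiable $f:\M\to\N$ one has the pointwise identity $f^*\dVolh = (\Det df)\,\dVolg$ as top-forms on $\M$. Integrating over $\M$ yields
\[
\int_\M f^*\dVolh \;=\; \int_\M \Det df \,\dVolg,
\]
so the two formulas defining $E(f)$ agree. Combined with the previous step, $E$ is a null-Lagrangian.

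There is really no serious obstacle here: the statement is a direct specialization of Corollary~\ref{cor:Null-Lag1}, the only inputs being (i) that top-forms on a manifold of matching dimension are automatically closed, and (ii) the defining relation between $\Det df$ and $f^*\dVolh$. If one wanted a fully self-contained argument, one could instead check directly that for a smooth variation $f_t$ of $f_0 = f$ relative to $\pl\M$, Stokes' theorem applied to the homotopy $F:\M\times I \to \N$ gives $\int_\M f_1^*\dVolh - \int_\M f_0^*\dVolh = \int_{\pl\M\times I} F^*\dVolh = 0$ (since $F$ is constant on $\pl\M$), but this is precisely the content of Lemma~\ref{lem:integrals_homotopic}, so invoking the corollary is the cleanest route.
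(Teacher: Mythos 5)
Your proposal is correct and matches the paper's own (implicit) argument exactly: the paper observes immediately before Corollary~\ref{cor:Null-Lag2} that any $d$-form on a $d$-manifold, in particular $\dVolh$, is closed, and then invokes Corollary~\ref{cor:Null-Lag1}, with the equality $\int_\M f^*\dVolh = \int_\M \Det df\,\dVolg$ coming from the relation recorded after Definition~\ref{def:intrinsic_det}. Nothing to add.
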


\begin{remark}
We limited the formulation of all the statements in this section to compact domains for simplicity. If the domain is non-compact, we need to restrict the discussion to compactly-supported variations, and consider the restriction of the functionals to compact subsets of $\M$. 
\end{remark}

\subsection{Strong formulation of the Piola identity}
\label{sec:Piola_id_strong}

In this section, we calculate the Euler-Lagrange equation of the Jacobian functional \eqref{eq:Jabcobian_functional}.
For this, we first need to define the coderivative for vector-valued forms.

Let $(\M,\g)$ be a $d$-dimensional oriented Riemannian manifold.
Let $E$ be a vector bundle over $\M$ (of arbitrary rank $n$), endowed with a Riemannian metric $\h$ and a metric affine connection $\nabla^E$. We denote by $\Omega^1(\M;E) = \Gamma(T^*\M\otimes E)$ the space of $1$-forms on $\M$ with values in $E$.
The metrics on $\M$ and $E$ induce a metric on $\Omega^1(\M;E)$, denoted $\innerp{\cdot,\cdot}_{\g,\h}$. 
\begin{definition}
\label{def:coderivative}
The coderivative,
\[
\delta_{\nabla^E}: \Omega^1(\M;E) \to  \Omega^{0}(\M;E) \simeq \Gamma(E)
\]
is the adjoint of the connection $\nabla^E$ with respect to the metric $\innerp{\cdot,\cdot}_{\g,\h}$.
That is, it is defined by the relation
\[
\int_\M \innerp{\sigma,\delta_{\nabla^E} \rho}_{\g,\h}\,\dVolg = \int_\M \innerp{\nabla^E \sigma,\rho}_{\g,\h}\,\dVolg,
\]
for all $\rho\in\Omega^1(\M;E)$ and compactly-supported $\sigma\in \Gamma(E)$.
\end{definition}

\begin{remark}
There exist various explicit formulas  for $\delta_{\nabla^E}$, which we do not mention since they are not used in this work.
\end{remark}

We shall use the coderivative in the following specific setting: 
Let $f:\M\to\N$ be smooth. Its differential is a section $df \in  \Gamma(T^*\M\otimes f^*T\N)=\Omega^1(\M;f^*T\N)$. Set $E=f^*T\N$ and $\nabla^E=\nabla^{f^*T\N}$. Note that $\Cof df  \in \Omega^1(\M;f^*T\N)$ is of the same type as $df$. Hence, $\delta_{\nabla^{f^*T\N}} df$ and $\delta_{\nabla^{f^*T\N}}  \Cof df $ are well-defined according to \defref{def:coderivative}.

\begin{lemma}
\label{lem:E-l_det}
Let $\M$ and $\N$ be $d$-dimensional, smooth, oriented Riemannian manifolds; The Euler-Lagrange equation of
\beq
\label{eq:Jacobian_functional}
E(f)=\int_{\M} f^* \dVolh=\int_{\M} \Det df\, \dVolg
\eeq
 is $\delta_{\nabla^{f^*T\N}} \Cof df = 0$
\end{lemma}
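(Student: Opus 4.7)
The plan is to compute the first variation $\frac{d}{dt}|_{t=0}E(f_t)$ directly, recognize the resulting expression as the left-hand side of Definition~\ref{def:coderivative} with $\rho=\Cof df$, and read off the Euler--Lagrange equation $\delta_{\nabla^{f^*T\N}}\Cof df=0$. I will not use the null-Lagrangian property here (that is a separate statement, which combined with this lemma yields the strong Piola identity); only the variational identity is needed.

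Given a smooth variation $f_t$ of $f$ relative to $\partial\M$ with $\partial_t f_t|_{t=0}=V\in\Gamma(f^*T\N)$ compactly supported in $\M\setminus\partial\M$, I would extend it to a map $F:\M\times(-\varepsilon,\varepsilon)\to\N$, $F(x,t)=f_t(x)$, and compute $\frac{d}{dt}|_{t=0}\Det(df_t)$ pointwise. The calculation rests on two ingredients. The first is the pointwise linear-algebraic identity
$$
\frac{d}{dt}\bigg|_{t=0}\Det(A+tB)=\innerp{\Cof A,B}_{\g,\h},
$$
valid for $A,B\in\Hom(T_x\M,T_{f(x)}\N)$, which follows from the classical matrix identity $\frac{d}{dt}|_{t=0}\det(A+tB)=\tr((\cof A)^{\top}B)$ by passing to positively oriented orthonormal frames and is frame-independent since both $\Det$ and $\Cof$ are defined intrinsically via Hodge-duals. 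The second is the identification
$$
\nabla^{F^*T\N}_{\partial_t}df_t\bigg|_{t=0}=\nabla^{f^*T\N}V,
$$
obtained from the torsion-freeness of the Levi-Civita connection applied to $F$: for any vector field $X$ on $\M$ (lifted trivially to $\M\times(-\varepsilon,\varepsilon)$, so $[\partial_t,X]=0$), the Hessian-symmetry identity gives $\nabla^{F^*T\N}_{\partial_t}dF(X)=\nabla^{F^*T\N}_X dF(\partial_t)=\nabla^{F^*T\N}_X V$, which at $t=0$ is $\nabla^{f^*T\N}_X V$.

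Combining these two ingredients yields
$$
\frac{d}{dt}\bigg|_{t=0}E(f_t)=\int_\M\innerp{\Cof df,\nabla^{f^*T\N}V}_{\g,\h}\,\dVolg,
$$
and by Definition~\ref{def:coderivative} this equals $\int_\M\innerp{V,\delta_{\nabla^{f^*T\N}}\Cof df}_\h\,\dVolg$. Since $V$ ranges over arbitrary compactly supported sections of $f^*T\N$, this is precisely the assertion that the Euler--Lagrange equation of $E$ is $\delta_{\nabla^{f^*T\N}}\Cof df=0$.

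The main obstacle is the second ingredient: $df_t$ is a section of the $t$-dependent bundle $T^*\M\otimes f_t^*T\N$, so differentiating it in $t$ is a priori meaningless. The cleanest fix is to assemble all the $df_t$ into a single section of $T^*\M\otimes F^*T\N$ over $\M\times(-\varepsilon,\varepsilon)$, where the ordinary covariant derivative $\nabla_{\partial_t}^{F^*T\N}$ is well-defined; only after this setup does the torsion-free argument cleanly produce $\nabla^{f^*T\N}V$.
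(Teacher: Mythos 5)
Your proposal follows essentially the same approach as the paper's proof of Lemma~\ref{lem:E-l_det}: treat the family $df_t$ as a single section of $T^*\M\otimes F^*T\N$ over $\M\times(-\varepsilon,\varepsilon)$, compute $\partial_t\Det(df_t)$ via a cofactor identity, use symmetry of the second covariant derivative to identify $\nabla^{F^*T\N}_{\partial_t}dF\big|_{t=0}=\nabla^{f^*T\N}V$ (the paper cites the same fact from [EL83, Proposition 2.4] that you derive from torsion-freeness), and then read off $\delta_{\nabla^{f^*T\N}}\Cof df$ from the definition of the coderivative. You also correctly identify the bundle-assembly step as the crux of making $\nabla_{\partial_t}df_t$ meaningful.

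The one place where you gloss over something the paper handles explicitly is your first ingredient. The pointwise identity $\frac{d}{dt}\big|_{t=0}\Det(A+tB)=\langle\Cof A,B\rangle_{\g,\h}$ lives in a \emph{fixed} $\Hom(V,W)$. In the actual computation, the target fiber $T_{f_t(x)}\N$ varies with $t$, so the Hodge stars that enter the intrinsic definitions of $\Det$ and $\Cof$ are themselves $t$-dependent. What you really need is $\partial_t\Det(dF)=\langle\Cof(dF),\nabla^{W}_{\partial_t}dF\rangle$, with the covariant derivative on the ambient bundle $W$, and this does not follow from the fixed-space identity alone. It follows once you know the Hodge star is parallel with respect to the induced connection, which is a consequence of metric compatibility; equivalently, in a frame of $F^*T\N$ that is orthonormal and $\partial_t$-parallel, the Hodge star is constant and your Euclidean computation applies verbatim. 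This is exactly what the paper isolates as Lemma~\ref{lem:Cofactor_grad_Determinant_bundle}, proved via Lemma~\ref{lem:Hodge_star_cov_diff_commute}. Your remark that everything is ``frame-independent since $\Det$ and $\Cof$ are defined intrinsically via Hodge-duals'' is true but does not by itself give you the compatibility between the Hodge dual and the covariant derivative; that compatibility is where metricity of the connection on $T\N$ enters, and it should be stated explicitly rather than absorbed into ``passing to orthonormal frames.''
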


We prove Lemma~\ref{lem:E-l_det} below. Combining \corrref{cor:Null-Lag2} and \lemref{lem:E-l_det} we deduce:

\begin{proposition}[Piola identity, intrinsic strong formulation]
\label{prop:divCof=0}
Let $(\M,\g)$  and  $(\N,\h)$ be oriented, $d$-dimensional Riemannian manifolds. Let $f\in C^\infty(\M,\N)$.  Then,
\[
\delta_{\nabla^{f^*T\N}} \Cof df = 0.
\]
Equivalently,
 for every compactly supported $\chi\in\Gamma(f^*T\N)$,
\beq
\int_\M \innerp{\Cof df, \nabla^{f^*T\N} \chi}_{\g,\h}\,\dVolg = 0.
\label{eq:Cof_d}
\eeq
\end{proposition}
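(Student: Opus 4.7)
The plan is to derive \propref{prop:divCof=0} as an essentially immediate consequence of the two preceding results: \corrref{cor:Null-Lag2}, which says that the Jacobian functional $E(f)=\int_\M \Det df\,\dVolg$ is a null-Lagrangian, and \lemref{lem:E-l_det}, which identifies its Euler-Lagrange equation as $\delta_{\nabla^{f^*T\N}} \Cof df = 0$. The conceptual point is that a null-Lagrangian has \emph{every} smooth map as a critical point; hence its Euler-Lagrange equation must hold pointwise for every $f \in C^\infty(\M,\N)$, which is exactly the content of the proposition.

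To make this precise, I would fix an arbitrary $f \in C^\infty(\M,\N)$ and any compactly supported $\chi \in \Gamma(f^*T\N)$, then extend $\chi$ to a smooth variation $f_t$ of $f_0 = f$ supported in a compact subset of the interior of $\M$ --- for instance via $f_t(x) = \exp^\N_{f(x)}(t\chi(x))$ for $|t|$ small. \corrref{cor:Null-Lag2} gives $\frac{d}{dt}\big|_{t=0} E(f_t) = 0$, and the computation behind \lemref{lem:E-l_det} (an integration by parts via \defref{def:coderivative}) expresses this first variation as $\int_\M \innerp{\chi,\delta_{\nabla^{f^*T\N}} \Cof df}_{\g,\h}\, \dVolg$. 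Since this vanishes for every compactly supported $\chi$, the fundamental lemma of the calculus of variations delivers the pointwise identity $\delta_{\nabla^{f^*T\N}} \Cof df = 0$.

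The equivalent integral formulation~\eqref{eq:Cof_d} is then just the defining adjoint property of $\delta_{\nabla^{f^*T\N}}$ in \defref{def:coderivative}, applied to $\rho = \Cof df$ and $\sigma = \chi$:
\[
\int_\M \innerp{\Cof df,\,\nabla^{f^*T\N}\chi}_{\g,\h}\,\dVolg \;=\; \int_\M \innerp{\chi,\,\delta_{\nabla^{f^*T\N}}\Cof df}_{\g,\h}\,\dVolg \;=\; 0.
\]

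The only substantive obstacle is \lemref{lem:E-l_det} itself, which is the one place where genuine computation is required. I would prove it by a direct first-variation calculation whose key ingredient is an intrinsic Jacobi-type formula $\frac{d}{dt}\big|_{t=0}\Det df_t = \innerp{\Cof df,\nabla^{f^*T\N}\chi}_{\g,\h}$, the manifold analogue of $\delta\det A = \cof A : \delta A$, together with the exchange identity $\frac{D}{dt}\big|_{t=0} df_t = \nabla^{f^*T\N}\chi$ coming from the torsion-freeness of the Levi-Civita connection on $\N$. The Jacobi formula itself is most transparently checked in a positively oriented orthonormal frame, in which $\Det$ and $\Cof$ reduce to the ordinary matrix determinant and cofactor, using also the algebraic identity $A^T\circ \Cof A = (\Det A)\,\mathrm{Id}$ recorded in the appendix; the remainder is routine integration by parts against $\chi$.
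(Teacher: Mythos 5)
Your proposal follows exactly the paper's own route: combine \corrref{cor:Null-Lag2} (the Jacobian functional is a null-Lagrangian) with \lemref{lem:E-l_det} (its Euler--Lagrange equation is $\delta_{\nabla^{f^*T\N}}\Cof df = 0$), and prove the latter by a first-variation computation resting on the Jacobi-type identity for $\Det$ and the exchange identity $\nabla^W_{\partial/\partial t} d\phi_t\big|_{t=0} = \nabla^{\phi^*T\N}V$. The argument is correct and essentially identical to the paper's proof, including the reduction of the integral identity~\eqref{eq:Cof_d} to the defining adjoint property of the coderivative in \defref{def:coderivative}.
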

Note that this formulation of the Piola identity does not require embedding the target space into a larger Euclidean space. In this sense, it is intrinsic. 
Note also that we do not require here that the manifolds be compact.

Deriving the Euler-Lagrange equation of the Jacobian functional \eqref{eq:Jacobian_functional} essentially amounts to the differentiation of the determinant of a bundle morphism.
It is well-known that the cofactor matrix is the gradient of the determinant. 
We need the following generalized version of this fact in the setting of morphisms between Riemannian vector bundles, whose proof appears in Section~\ref{app:deriv_det_bundle}:
\begin{lemma}
\label{lem:Cofactor_grad_Determinant_bundle}
Let $E$ and $F$ be oriented vector bundles of rank $d$  over a smooth manifold $\M$, equipped with smooth metrics and metric-compatible connections. Let $A:E \to F$ be a smooth bundle map. {Then, for every $V \in \Gamma(\TM)$
 \[
   d(\Det A)(V)= \IP{\Cof A}{\nabla_V A}_{E,F},
 \]
where $\Det A$,$\Cof A$ are as defined in \ref{def:intrinsic_det} and \ref{def:intrinsic_cof}, using the metrics and orientations on $E,F$, and $\nabla A$ is the induced tensor product connection on $E^* \otimes F$ induced by the connections on $E,F$.}
 \end{lemma}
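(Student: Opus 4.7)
The plan is to reduce the bundle-theoretic statement to the classical matrix identity known as Jacobi's formula, $d(\det)(A)(B) = \tr((\cof A)^T B)$, by working in parallel oriented orthonormal frames.

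First I would observe that the claim is pointwise: both sides are values of sections at an arbitrary $p \in \M$. Fix such a $p$, and fix a smooth curve $\gamma:(-\e,\e)\to\M$ with $\gamma(0)=p$ and $\gamma'(0)=V_p$. Because the connections on $E$ and $F$ are metric-compatible (and preserve orientation, which is a discrete datum), I can parallel-transport an oriented orthonormal basis of $E_p$ along $\gamma$ to obtain a smooth oriented orthonormal frame $(e_1,\dots,e_d)$ of $E$ along $\gamma$ that is parallel with respect to $\nabla^E$; similarly choose a parallel oriented orthonormal frame $(f_1,\dots,f_d)$ of $F$ along $\gamma$. In these frames, write $A_{\gamma(t)}(e_j(\gamma(t))) = \sum_i a_{ij}(t)\, f_i(\gamma(t))$, so $A$ is encoded by the matrix $A(t)=[a_{ij}(t)]$.

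Next I would translate the three objects $\Det A$, $\Cof A$ and $\nabla_V A$ into classical matrix language within these frames. Since the frames are parallel, the connection acts as componentwise differentiation: $\nabla_V A|_p$ is represented by $A'(0)$. Since the frames are orthonormal and oriented, the Hodge-star operators $\star_E^{\bullet}$ and $\star_F^{\bullet}$ act exactly as the standard Hodge stars on $\R^d$; in particular $\star_E^0(1) = e_1\wedge\cdots\wedge e_d$ and $\star_F^d(f_1\wedge\cdots\wedge f_d)=1$. Unwinding Definitions~\ref{def:intrinsic_det} and \ref{def:intrinsic_cof}, this gives $\Det A = \det A(t)$ and the matrix of $\Cof A$ is the usual matrix cofactor $\cof A(t)$. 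Finally, since the frames are orthonormal, the metric $\IP{\cdot}{\cdot}_{E,F}$ on $\Hom(E,F)$ restricts pointwise to the Frobenius inner product: $\IP{B}{C}_{E,F}= \sum_{ij} b_{ij}c_{ij} = \tr(B^T C)$.

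With these identifications in place, the left-hand side becomes
\[
d(\Det A)(V_p) = \frac{d}{dt}\bigg|_{t=0}\det A(t),
\]
and the right-hand side becomes $\tr\brk{(\cof A(0))^T A'(0)}$. The identity then reduces to the classical Jacobi formula for the derivative of the determinant of a matrix-valued function, which is standard. Since $p$ was arbitrary, this completes the proof.

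The only real subtlety — and the step I would verify most carefully — is the existence and use of simultaneous parallel oriented orthonormal frames along $\gamma$, which is what ensures that (i) $\nabla_V A$ really reduces to $A'(0)$, and (ii) the intrinsic Hodge stars, inner products, determinant and cofactor all coincide with their Euclidean counterparts. Everything else is bookkeeping and an appeal to Jacobi's formula.
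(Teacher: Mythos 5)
Your proof is correct, and it takes a genuinely different route from the paper. The paper works intrinsically: it first proves an auxiliary lemma that the fiberwise Hodge star commutes with the covariant derivative (using metricity), then expands $V(\Det A) = \star^d_F\,\nabla_V\bigl(Ae_1\wedge\cdots\wedge Ae_d\bigr)$ via the Leibniz rule for an arbitrary (not necessarily parallel) positive orthonormal frame $e_i$, and shows the $\nabla_V e_i$ terms drop out by metricity, leaving precisely $\IP{\Cof A}{\nabla_V A}_{E,F}$ after a wedge-algebra computation. You instead trivialize everything: by parallel-transporting oriented orthonormal frames of $E$ and $F$ along a curve representing $V$, you simultaneously freeze the metrics, orientations, Hodge stars, and connection coefficients, turning $\Det$, $\Cof$, $\nabla_V A$, and $\IP{\cdot}{\cdot}_{E,F}$ into $\det$, $\cof$, $A'(0)$, and the Frobenius pairing, and then invoke Jacobi's formula. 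Your approach is shorter and more elementary (it outsources the combinatorics to a standard matrix identity and avoids the Hodge-star commutation lemma entirely), while the paper's approach is more self-contained at the bundle level and makes the role of metricity in the wedge calculus explicit. One small presentational point: it would be worth a sentence justifying that parallel transport preserves orientation (e.g.\ the set of positively oriented orthonormal frames is open in the orthonormal frame bundle, and parallel transport is continuous), since that is the only place the claim is used and it is stated only in passing.
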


\emph{Proof of \lemref{lem:E-l_det}:}
Let $\phi: \M \to \N$ be a smooth map, and let $V \in \Ga\brk{\phi^*\left( \TN \right)}$. Let $\phi_t:\M \to \N$ be a smooth variation which is constant on $\pl \M$ such that $\phi_0=\phi$ and $\left. \pd{\phi_t}{t}  \right|_{t=0}=V$. Our goal is to prove that
\[
\left. \deriv{}{t} E\left( \phi_t \right) \right|_{t=0}=\int_{\M} \IP{\delta_{\nabla^{\phi^*T\N}} \big(\Cof d\phi \big)}{V}_{\phi^*\TN} \dVolg.
 \]

Denote by $\psi : \M \times I \to \N$ the map $\psi(p,t) = \phi_t\left( p \right)$. Let $P:\M \times I \to \M$ be the projection $P(p,t)=p$. Consider the following vector bundles over $\M \times I$:
\begin{enumerate}
  \item $\brk{P^*\brk{\TM}}^* \cong P^{*}\brk{\TstarM}$. Its fiber over $(p,t)$ is $T^*_p\M$.
  \item $\psi^{*}\brk{\TN}$. Its fiber over $(p,t)$ is $T_{\phi_t(p)}\N$.
\end{enumerate}

Note that $\brk{d\phi_t}_p:T_p\M \to T_{\phi_t(p)}\N$, i.e.~$\brk{d\phi_t}_p \in T^*_p\M \otimes T_{\phi_t(p)}\N$.
Running over all the pairs $(p,t) \in \M \times I$ we obtain a section of the vector bundle $W:= \brk{P^{*}\brk{\TM}}^* \otimes \psi^{*}\brk{\TN}$.

Now,
\beq 
\label{eq:energy_time_deriv_Riemann}
\begin{split}
\left. \deriv{}{t} E\left( \phi_t \right) \right|_{t=0}=&\int_{\M} \left. \deriv{}{t}  \Det (d\phi_t) \right|_{t=0}   \dVolg \teq{(*)} 
\int_{\M} \left.    \IP{\Cof (d\phi_t)}{\nabla_{\pd{}{t}}^W d\phi_t}_{P^{*}\brk{\TM},\psi^{*}\brk{\TN}} \right|_{t=0} \dVolg \\ 
&+
  \int_{\M}   \IP{\Cof d\phi}{\left. \nabla^W_{\pd{}{t}} d\phi_t   \right|_{t=0} }_{\TM,\phi^*\TN} \dVolg ,
\end{split}
\eeq
where equality $(*)$ follows from an application of \lemref{lem:Cofactor_grad_Determinant_bundle} (with $A=d\phi_t,V=\pd{}{t}$).

It is well-known that 
\beq 
\label{eq:symmetric_equality}
\left. \brk{ \nabla^W_{\pd{}{t}} d\phi_t } \right|_{t=0} = \nabla^{\phi^*\TN}V,
\eeq 
See e.g.~\cite[Proposition 2.4, Pg 14]{EL83}.
Eqs. \eqref{eq:energy_time_deriv_Riemann} and \eqref{eq:symmetric_equality} then imply
\[
\left. \deriv{}{t} E\left( \phi_t \right) \right|_{t=0}=\int_{\M}   \IP{\Cof d\phi}{\nabla^{\phi^*\TN}V }_{\TM,\phi^*\TN} \dVolg =\int_{\M} \IP{\delta_{\nabla^{\phi^*T\N}} \big(\Cof d\phi \big)}{V}_{\phi^*\TN} \dVolg,
\]
where the last equality follows from \defref{def:coderivative}.
\hfill\ding{110}

\begin{remark}
In applying \lemref{lem:Cofactor_grad_Determinant_bundle}, we needed the assumption that the connections on $\TM,\TN$ are metric-compatible. More precisely, the Levi-Civita connections on $\TM,\TN$ induce connections on $P^{*}\brk{\TstarM}$ and $\psi^{*}\brk{\TN}$. 
Since the original connections were metric so are the induced ones. 
\end{remark}


An immediate corollary of Proposition~\ref{prop:divCof=0} and Corollary~\ref{cor:DetAndCof} is the well-known fact that smooth local isometries between manifolds are harmonic.
Since we want to use the same idea for Lipschitz mappings, we need a weak version of Proposition~\ref{prop:divCof=0} that applies to them. This is Theorem~\ref{thm:weak_Piola_identity}, which is proved in the next section.

\subsection{Weak formulation of the Piola identity: Proof of Theorem~\ref{thm:weak_Piola_identity}}
\label{sec:Piola_id_weak}

First, we show that \eqref{eq:Cof_df_orthogonal_extrinsic_a} holds for every $f\in C^\infty(\M,\N)$.  
Given an isometric embedding $\iota:(\N,\h)\to(\R^D,\euc)$,  
\[
d\iota:T\N\to \N\times\R^D 
\Textand f^*d\iota:f^*T\N\to \M\times\R^D. 
\]
Then, Eq.~\eqref{eq:Cof_d} in \propref{prop:divCof=0} can be rewritten as
\beq
\label{eq:divCof=0_aa}
\int_\M\innerp{f^*d\iota\circ\Cof df,f^*d\iota\circ\nabla^{f^*T\N}\chi}_{\g,\euc} \,\dVolg = 0
\eeq
for all $\chi\in\Gamma_0(f^*T\N)$.

Denote by $N\N$ the normal bundle of $\iota(\N)$ in $\R^D$, that is, $N\N\subset\N\times\R^D$ is the orthogonal complement of $d\iota(T\N)$ in $(\N\times\R^D,\euc)$. Denote by $P$ and $P^\perp$ the orthogonal projections of $\N\times\R^D$ into $d\iota(T\N)$ and $N\N$. For a section $\zeta\in\Gamma(T\N)$, the Levi-Civita connection on $T\N$ is induced by the Levi-Civita connection on the trivial bundle, $\N\times\R^D$, by the classical relation
\[
d\iota\circ \nabla^{T\N}\zeta = P\brk{\nabla^{\N\times\R^D} (d\iota\circ\zeta)}
\]
(Recall $\nabla^{\N\times\R^D} \xi$ is simply a componentwise-differentiation of $\xi$).

Let $\zeta\in\Gamma(T\N)$ have compact support in $f(\M)$.
Then, $f^*\zeta\in\Gamma_0(f^*T\N)$, and
\[
\begin{split}
f^*d\iota\circ\nabla^{f^*T\N} f^*\zeta &= f^*\brk{d\iota\circ\nabla^{T\N} \zeta} \\
&= f^* \brk{P\brk{\nabla^{\N\times\R^D} (d\iota\circ\zeta)}} \\
&= (f^* P) \brk{\nabla^{\M\times\R^D} (f^*d\iota\circ f^*\zeta)},
\end{split}
\]
where in the last step we used the fact that $f^*\nabla^{\N\times\R^D} = \nabla^{\M\times\R^D}$.
Since sections of the form $f^*\zeta$ span $\Gamma(f^*T\N)$ locally, it follows that
\beq
\label{eq:divCof=0_ab}
f^*d\iota\circ\nabla^{f^*T\N} \chi = (f^* P)\brk{\nabla^{\M\times\R^D} (f^*d\iota\circ \chi)}.
\eeq

Next, we note that  
\[
f^*d\iota\circ \chi\in\Gamma_0(f^*d\iota(T\N)) \subset \Gamma_0(M\times\R^D).
\]
Sections in $\Gamma_0(f^*d\iota(T\N))$ can be represented by sections in $\Gamma_0(M\times\R^D)$ projected onto $f^*d\iota(T\N)$. That is, setting $f^*d\iota\circ \chi = (f^*P)(\xi)$, and combining \eqref{eq:divCof=0_aa}, \eqref{eq:divCof=0_ab} we get 
\[
\int_\M\innerp{f^*d\iota\circ\Cof df,(f^* P)\brk{\nabla^{\M\times\R^D} (f^*P)(\xi)}}_{\g,\euc} \,\dVolg = 0
\]
for all $\xi\in\Gamma_0(\M\times\R^D)$. 
Since $f^*d\iota\circ\Cof df\in \Gamma(f^*d\iota(T\N))$, the outer projection can be omitted, yielding,
\[
\int_\M\innerp{f^*d\iota\circ\Cof df, \nabla^{\M\times\R^D} (f^*P)(\xi)}_{\g,\euc} \,\dVolg = 0.
\]

Next, set $(f^*P)(\xi) = \xi - (f^*P^\perp)(\xi)$. Then, for all $\xi\in\Gamma_0(\M\times\R^D)$,
\beq
\begin{split}
&\int_\M\innerp{f^*d\iota\circ\Cof df, \nabla^{\M\times\R^D} \xi}_{\g,\euc} \,\dVolg \\
&\qquad= 
\int_\M \tr_\g \innerp{f^*d\iota\circ\Cof df, \nabla^{\M\times\R^D} (f^*P^\perp)(\xi)}_{\euc} \,\dVolg,
\end{split}
\label{eq:second_form_sub_here}
\eeq
where on the right-hand side, we have separated the inner-product on $T^*\M\otimes\R^D$ into, first, an inner-product over $\R^D$, followed by a trace over $T^*\M$. 

Let $A: T\N\times T\N\to N\N$ be the second fundamental form of $\N$ in $\R^D$.
That is, 
\[
\inner{A(u,v),\eta}_\euc = \innerp{d\iota\circ u,\nabla_{v}^{\N\times\R^D} \eta}_\euc,
\]
for $u,v\in\Gamma(T\N)$ and $\eta\in \Gamma(N\N)$. 
Pulling back with $f$,
\[
\innerp{f^*A(u,df(X)),\eta}_\euc = \innerp{f^*d\iota\circ u, \nabla_{X}^{\M\times\R^D} \eta}_\euc,
\]
for $u\in\Gamma(f^*T\N)$, $X\in\Gamma(T\M)$ and $\eta\in \Gamma(f^*N\N)$. Setting $\eta = (f^*P^\perp)(\xi)$ and
$u = \Cof df(X)$,
\[
\innerp{f^*A(\Cof df(X),df(X)),(f^*P^\perp)(\xi)}_\euc = \innerp{f^*d\iota\circ \Cof df(X), \nabla_{X}^{\M\times\R^D} (f^*P^\perp)(\xi)}_\euc.
\]
Since the range of $A$ is $N\N$, the projection $f^*P^\perp$ on the left-hand side can be omitted. Moreover, replacing the vector field $X$ by the components $X_i$ of an orthonormal frame field, and summing over $i$, we obtain
\[
\innerp{\tr_\g f^*A(\Cof df,df),\xi}_\euc = \tr_\g \innerp{f^*d\iota\circ \Cof df, \nabla^{\M\times\R^D} (f^*P^\perp)(\xi)}_\euc.
\]
Substituting this last identity into \eqref{eq:second_form_sub_here}, we finally obtain
\[
\int_\M\innerp{f^*d\iota\circ\Cof df, \nabla^{\M\times\R^D} \xi}_{\g,\euc} \,\dVolg =
\int_\M \innerp{\tr_\g f^*A(\Cof df,df),\xi}_\euc \,\dVolg,
\]
for all $f\in C^\infty(\M,\N)$ and all $\xi\in\Gamma_0(\M\times\R^D)$.

It remains to show that this identity holds for all $f\in W^{1,p}(\M;\N)$ and all $\xi \in W_0^{1,2}(\M;\R^D)\cap L^\infty(\M;\R^D)$. This follows by first approximating $f$ by smooth functions in the $W^{1,p}$ topology (this is possible since $p\ge d$), and then approximating $\xi$ with smooth sections of $\M\times\R^D$ in the $W^{1,2}$ topology.
Since $p\ge 2(d-1)$, then
$f^*d\iota\circ \Cof df\in L^2(\M;T^*\M\otimes \R^D)$, hence the first integrand is well defined for $\xi\in W^{1,2}(\M;\R^D)$. 
Since $p\ge d$, $\tr_\g f^*A(df,\Cof df)\in L^1(\M;\R^D)$, and the second integrand is well-defined for $\xi\in L^\infty(\M;\R^D)$.
The fact that $f_n^*d\iota\circ \Cof df_n\to f^*d\iota\circ \Cof df$ in $L^2$ and $f_n^*A(df_n,\Cof df_n)\to f^*A(df,\Cof df)$ in $L^1$ also hinges on the fact that $p>d$, hence the convergence $f_n\to f$ is uniform.
The necessity of uniform convergence is also the reason for assuming $p>2$ for $d=2$, rather than $p\ge 2(d-1)=2$.
\hfill\ding{110}

\subsection{Coordinate formulation of the Piola identity}
\label{sec:Piola_id_coor}
For completeness, we formulate the strong and weak Piola identities in local coordinates:
Let the indices $i,j,k$ denote coordinates on $\M$, $\alpha,\beta,\gamma$ denote coordinates on $\N$ and $a,b$ denote coordinates on $\R^D$. 
$\g_{ij}$ and $\h_{\alpha\beta}$ denote the entries of the metrics $\g$ and $\h$, respectively, and $\Gamma^\alpha_{\beta\gamma}$ are the Christoffel symbols of $\nabla^{\N}$.
The differential $df$, consists of vectors $\pl_i f\in T\N$ that have entries $\pl_i f^\alpha$; similarly, $\Cof df=(\Cof df)_i^\alpha$.
Then the strong Piola identity \eqref{eq:Cof_d} reads
\beq
\label{eq:Cof_d__coordinates}
\int_\M (\Cof df)_i^\alpha \,\g^{ij} \h_{\alpha\beta}\, \brk{\pl_j \xi^\beta +\pl_j f^\gamma \Gamma^\beta_{\gamma\delta} \xi^\delta}\sqrt{|\g|}\,dx = 0,
\eeq
where both $\h$ and $\Gamma$ are evaluated at $f(x)$.

The weak Piola identity \eqref{eq:Cof_df_orthogonal_extrinsic_a} reads 
\beq
\label{eq:Piola_id_weak_coor}
\int_\M  \g^{ij}\, \partial_\alpha \iota^a (\Cof df)_i^\alpha\, \delta_{ab}\, \pl_j \xi^b \,\sqrt{|\g|}\,dx
=
\int_\M \g^{ij}\, A^a((\Cof df)_i, \pl_j f)\, \delta_{ab}\, \xi^b \,\sqrt{|\g|}\,dx,
\eeq
where $|\g| = \det \g_{ij}$, $A^a$ are the entries of the second fundamental form induced by $\iota$, and both
$\iota$ and $A$ are evaluated at $f(x)$.

The cofactor operator $\Cof df$ reads in coordinates
\beq
\label{eq:CofAformula_coordinates}
(\Cof df)_i^\alpha = \frac{\sqrt{|\h|}}{\sqrt{|\g|}} h^{\alpha\beta} \delta_{\beta\gamma} (\cof df)_k^\gamma \delta^{kj} \g_{ji},
\eeq
where $\cof df$ is the cofactor of the matrix $\pl_i f^\alpha$ (see \eqref{eq:CofAformula} in Proposition \ref{lm:Cof_vs_cof}).


\section{Reshetnyak's rigidity theorem for manifolds}
\label{sec:asymptotic_rigidity}

In this section we prove Theorem~\ref{thm:Reshetnyak_M_N}.
Before the proof we state a version of the fundamental theorem of Young measures, adapted to our setting, which will be used throughout the proof.

\subsection{Young measures on vector bundles}
\label{sec:Young_meas}

The following theorem is an adaptation of the fundamental theorem of Young measures \cite{Bal89}, adapted from Euclidean settings to vector bundles with Riemannian metrics (more generally, it applies to any Finsler vector bundle).

\begin{theorem}
\label{thm:Young_meas_bundles}
Let $(\M,\g)$ be a compact Riemannian manifold. Let $E\to \M$ be a vector bundle endowed with a Riemannian metric.
Let $(\xi_n)$ be a sequence of measurable sections of $E$, bounded in $L^1(\M;E)$.
Then, there exists a subsequence $(\xi_n)$ and a family $(\nu_x)_{x\in\M}$ of Radon probability measures on $E_x$, depending measurably on $x$, such that
\beq
\label{eq:young_measure_theorem_L1convergence}
\psi\circ\xi_n \weakly \BRK{x\mapsto \int_{E_x} \psi_x(\lambda)\, d\nu_x(\lambda)}
\quad \text{in $L^1(\M;W)$},
\eeq
for every Riemannian vector bundle $W\to\M$ and every continuous bundle map $\psi:E\to W$ (not necessarily linear), satisfying that $(\psi\circ\xi_n)$ is sequentially weakly relatively compact in $L^1(\M;W)$.
\end{theorem}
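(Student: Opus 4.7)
The plan is to reduce to the classical fundamental theorem of Young measures \cite{Bal89} by embedding $E$ isometrically into a trivial bundle. By a standard partition-of-unity argument on the compact manifold $\M$, there exists $N\in\bbN$ and a smooth bundle embedding $\iota:E\hookrightarrow \M\times\R^N$ realizing each fiber $E_x$ as a linear subspace of $\R^N$; composing fiberwise with the positive square root of the appropriate metric operator, I may further arrange that $\iota|_{E_x}$ is a linear isometry onto $\iota(E_x)\subset(\R^N,\euc)$. Under this embedding $(\xi_n)$ becomes a sequence of measurable maps $\M\to\R^N$, bounded in $L^1(\M;\R^N)$, so the classical theorem of \cite{Bal89} produces a subsequence (not relabeled) and a measurable family of Radon probability measures $(\tilde\nu_x)_{x\in\M}$ on $\R^N$ satisfying
\[
\phi(\cdot,\xi_n)\weakly \Bigl(x\mapsto \int_{\R^N}\phi(x,\lambda)\,d\tilde\nu_x(\lambda)\Bigr) \quad\text{in }L^1(\M;\R^L)
\]
for every Carath\'eodory map $\phi:\M\times\R^N\to\R^L$ such that $(\phi(\cdot,\xi_n(\cdot)))$ is weakly $L^1$-precompact.

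Since $\iota(\xi_n(x))\in\iota(E_x)$ for every $n$ and a.e.~$x$, testing against any nonnegative continuous function on $\M\times\R^N$ that vanishes exactly on the closed set $\{(x,\lambda):\lambda\in\iota(E_x)\}$ shows that $\tilde\nu_x$ is supported on $\iota(E_x)$ for a.e.~$x$; I then let $\nu_x$ be the pushforward of $\tilde\nu_x$ by $\iota^{-1}$, which is a Radon probability measure on $E_x$, measurable in $x$ thanks to the smoothness of $\iota$. For the weak-convergence assertion, given $\psi:E\to W$ as in the statement, I embed $W$ similarly in a trivial bundle $\M\times\R^M$. The continuous map $(x,\lambda)\mapsto \psi_x(\iota^{-1}(\lambda))$, defined on the closed subset $\iota(E)\subset \M\times\R^N$, extends by the Tietze extension theorem to a continuous $\Psi:\M\times\R^N\to\R^M$. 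Applying the displayed convergence with $\phi=\Psi$ (the weak-compactness hypothesis holds since $\Psi(\cdot,\xi_n)=\psi\circ\xi_n$) and using that $\tilde\nu_x$ concentrates on $\iota(E_x)$ yields
\[
\psi\circ\xi_n\weakly \Bigl(x\mapsto \int_{E_x}\psi_x(\mu)\,d\nu_x(\mu)\Bigr) \quad\text{in }L^1(\M;\R^M),
\]
and this convergence restricts to weak convergence in $L^1(\M;W)$ because the isometric subbundle structure gives the inclusion $L^\infty(\M;W)\hookrightarrow L^\infty(\M;\R^M)$, which supplies all needed test sections.

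The main technical obstacle I expect is the interplay between the moving subspaces $\iota(E_x)\subset\R^N$ and the Young measures $\tilde\nu_x$: one must verify that $\tilde\nu_x$ really is carried by $\iota(E_x)$ for a.e.~$x$, and that the pushforward $\nu_x$ is jointly measurable in $x$ in the sense required to make sense of the section $x\mapsto \int_{E_x}\psi_x\,d\nu_x$ and the corresponding integrals. These issues are absent in the Euclidean setting and are resolved here by exploiting the smoothness of $\iota$ together with standard measurable-selection arguments, but they are the genuine point at which the bundle version departs from \cite{Bal89}.
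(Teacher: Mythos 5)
Your proof is correct, but it takes a genuinely different route from the one the authors have in mind. The paper's stated strategy is local: choose a partition of $\M$ fine enough that $E$ and $W$ are trivial over each piece, run Ball's Euclidean argument trivialization by trivialization, and then observe that the resulting Young measures patch together (the disintegrated limit measure is determined locally). Your strategy is global: isometrically embed $E$ (and $W$) as subbundles of trivial bundles $\M\times\R^N$ (resp.\ $\M\times\R^M$), apply the classical theorem once, show via the zero integrand $\min(\dist(\lambda,\iota(E_x)),1)$ that the resulting measures $\tilde\nu_x$ concentrate on the fibers $\iota(E_x)$, push forward by $(\iota|_{E_x})^{-1}$, and handle bundle maps $\psi$ by a Tietze extension of $(x,\lambda)\mapsto\psi_x(\iota^{-1}\lambda)$ from the closed set $\iota(E)$. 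The trade-off is essentially: your approach avoids any gluing step and mirrors the embedding philosophy already used elsewhere in the paper (e.g.\ the Nash embedding of $\N$), at the cost of the support-concentration argument and the Tietze extension, whereas the paper's local approach requires no extension or support argument but must in principle verify that the locally obtained families of measures agree on overlaps (which is immediate but needs to be said). Two small points worth making explicit in your write-up: (i) the "classical" theorem of \cite{Bal89} is stated for subsets of $\R^d$, so you still need the routine observation that it holds on a compact Riemannian manifold (either by a chart cover, or because it holds on any finite measure space); and (ii) the passage from weak $L^1(\M;\R^M)$ convergence to weak $L^1(\M;W)$ convergence should be phrased via the orthogonal projection $\pi_W:\M\times\R^M\to W$, which turns any test $\phi\in L^\infty(W^*)$ into the test $\phi\circ\pi_W\in L^\infty(\M;(\R^M)^*)$, rather than via an inclusion of $L^\infty$ spaces, so the direction of the argument is transparent.
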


\begin{remark}
The criterion that $(\psi\circ \xi_n)$ is sequentially weakly relatively compact in $L^1(\M;W)$ is equivalent to
\beq
\label{eq:cond_L1_weak_comp}
\sup_n \int_\M \vp(|\psi\circ \xi_n|) \, \dVolg < \infty
\eeq
for some continuous function $\vp: [0,\infty)\to \R$, such that $\lim_{t\to\infty} \vp(t)/t =\infty$.
This is known as the de la Vall\'ee Poussin's criterion \cite[Remark 3]{Bal89}.
\end{remark}

The above theorem makes use of the following definitions. 
\begin{definition}
Let $(\M,\g)$ be a compact Riemannian manifold. Let $W,E\to \M$ be Riemannian vector bundles.
\begin{enumerate}
  \item The space $C_0(E;W)$ is the space of continuous bundle maps (not necessarily linear) $E\to W$ that are decaying fiberwise. That is, if $h\in C_0(E;W)$, then for every $x\in\M$,  
\[
\lim_{E_x\ni e\to \infty} |h_x(e)|_{W_x} = 0.
\]

\item $M(E)$ is the bundle of bounded Radon  measures on $E$. A section $\mu$ of $M(E)$ is \emph{measurable} (more accurately,  \emph{weak-$*$-measurable}) if for every bundle map $f\in C_0(E;\R)$, the real-valued function
\[
\BRK{x\mapsto \int_{E_x} f_x(e)\, d\mu_x(e)} : \M\to\R
\]
is measurable;
note that this implies the measurability of 
\[
\BRK{x\mapsto \int_{E_x} f_x(e)\, d\mu_x(e)} : \M\to W
\]
for every $f\in C_0(E;W)$.

\item 
$h_n\to h$ weakly in $L^1(\M;W)$ if for every $\phi\in L^\infty(W^*)$,
\[
\int_\M \phi\circ h_n \,\,\dVolg \to \int_\M \phi\circ h \,\,\dVolg
\]
where $L^\infty(W^*)$ is the space of essentially bounded measurable vector bundle morphisms $\phi:W \to \M\times\R$.
Note that while, generally, the composition of measurable functions is not measurable, in this case the fiberwise linearity of $\phi$ implies that the composition amounts to a scalar multiplication of vectors, which is measurable.
\end{enumerate}
\end{definition}

The proof of Theorem~\ref{thm:Young_meas_bundles} follows the lines of the proof of the Euclidean case \cite{Bal89}, with some natural adaptations.
Mainly, by taking a partition of $\M$ fine enough such that there exist local trivializations of $W$ subordinate to the partition, one can follow the proof of \cite{Bal89} on each element of the trivialization.
Note that we cannot use a similar approach to reduce the proof of Theorem~\ref{thm:Reshetnyak_M_N} to the proof of the  Euclidean Reshetnyak theorem. 
One reason is that Sobolev spaces between manifolds (which are the spaces considered in Theorem~\ref{thm:Reshetnyak_M_N}), are more complicated than Sobolev spaces on vector bundles (considered in Theorem~\ref{thm:Young_meas_bundles}); 
in particular, they are not vector spaces, and smooth functions are not necessarily dense subspaces (see Appendix~\ref{subsec:sobolev} for details).
Therefore, an adaptation of the proof of \cite{JK90} is more delicate.

\subsection{Proof of Theorem~\ref{thm:Reshetnyak_M_N}}
\label{sec:Reshetnyak_proof}
Since the proof is long, we divide it into six steps.
In Steps~I--III, we assume that $p>d$;
in Step~I, we show that $f_n$ converges uniformly to $f$;
in Step~II, we show that $f$ is an isometric immersion (this main step is the one most reminiscent to \cite{JK90}, after an appropriate localization of the problem);
in Step~III, we show that  $f_n$ converges to $f$ also in the strong topology of $W^{1,p}(\M;\N)$.
In Step~IV we relax the $p>d$ assumption, and prove that the results of Steps~I-III hold for $p\ge1$ (note that \cite{JK90} does not treat this case even in the simpler Euclidean settings). 
Finally,  in Steps~V-VI we prove that $f$ is an isometry if the additional assumption on $f_n$ and the equality of volumes are satisfied.

Remark about notation: 
In Theorem~\ref{thm:Reshetnyak_M_N} the distance $\dist(df_n,\SO{\g,f^*_n\h})$ is calculated with respect to the inner-product induced on $T^*\M\otimes f_n^*T\N$ by $\g$ and $\h$.
Throughout the proof there occur similar expressions, each using a different inner-product to calculate distances.
To keep track of which inner-product is being used, we will refer to it in the subscript, e.g., $\dist_{(\g,f_n^*\h)}(df_n,\SO{\g,f^*_n\h})$.

\bigskip
\emph{Step I: $f_n$ has a uniformly converging subsequence}

As described in Appendix~\ref{subsec:sobolev}, Sobolev maps between manifolds are defined by first embedding the target manifold isometrically into a higher-dimensional Euclidean space. 
Let $\iota:(\N,\h)\to (\R^D,\euc)$ be a smooth isometric embedding of $\N$, where $\euc$ denotes the standard Euclidean metric on $\R^D$, and let 
\[
F_n = \iota \circ f_n:\M\to\R^D
\]
be the ``extrinsic representative" of $f_n$.
For $x\in\M$,
denote by $O(\g_x,\euc)$ the set of linear isometric embeddings $(T_x\M,\g_x)\to (\R^D,\euc)$. Note that when mapping a vector space into a vector space of higher dimension, there is no notion of preservation of orientation; in particular, $\SO{\g,\euc}$ is not defined. However, since $A\in \O{\g_x,\h_{f(x)}}$ implies that 
$d\iota_{f(x)}\circ A \in \O{\g_x,\euc}$, it follows that
\[
\begin{split}
  \dist_{(\g,\euc)}(d F_n, \O{\g,\euc}) &\le \dist_{(\g,f_n^*\h)}(d f_n, \O{\g,f_n^*\h}) \\
&\le \dist_{(\g,f_n^*\h)}(d f_n, \SO{\g,f_n^*\h}).
\end{split}
\]
In particular, \eqref{eq:Lpconv_of_dist} implies that
\beq
\label{eq:dist_dF_n_O}
\dist(d F_n, \O{\g,\euc})\to 0 \qquad\text{in $L^p(\M)$}.
\eeq
Since, by the compactness of $\N$,  $\image(F_n) \subseteq \iota(\N) \subset\R^D$ is bounded, it follows from the Poincar\'e inequality that $F_n$ are uniformly bounded in $W^{1,p}(\M;\R^D)$. Hence, $F_n$ has a subsequence  converging weakly in $W^{1,p}(\M;\R^D)$ to a limit $F\in W^{1,p}(\M;\R^D)$.

Since $p>d$, it follows from the Rellich-Kondrachov theorem \cite[Theorem~6.3]{AF03} that $F_n \to F$ uniformly; since $\iota(\N)$ is closed in $\R^D$, it follows that $F(\M)\subset \iota(\N)$, i.e., $\iota^{-1}\circ F:\M\to\N$ is well-defined. The compactness of $\N$ implies that the intrinsic and the extrinsic distances on $\N$ are strongly equivalent (see \cite{MC1}). Therefore, $f_n \to \iota^{-1}\circ  F$ uniformly; we denote this limit by $f$; it is in $W^{1,p}(\M;\N)$ by the very definition of that space.

\bigskip
\emph{Step II: $f$ is an isometric immersion}

By Theorem~\ref{thm:Liouville_M_N_Lipschitz}, it is sufficient to prove that $df \in \SO{\g,f^*\h}$ a.e.
Note that this is a local statement. Thus, it suffices to show that every $x\in\M$ has an open neighborhood in which this property holds. Using local coordinate charts, this statement can be reformulated in terms of mappings between a manifold and a Euclidean space of the same dimension; as already discussed, the equality of dimension is critical for keeping track of orientation-preserving linear maps. 

So let $x\in \M$ and let $\phi:U \subset\R^d \to \N$ be a positively-oriented coordinate chart around $f(x)\in \N$.
Let $\M'$ be an open neighborhood of $x$ such that $\overline{f(\M')}\subset \phi(U)$. 
Since $f_n\to f$ uniformly, and the distance between $f(\M')$ and the boundary of $\phi(U)$ is positive, $f_n(\M')\subset \phi(U)$ for $n$ large enough. 

In the rest of this step of the proof, we will view $f_n$ and $f$ as mappings $\M'\to U\subset\R^d$; for $y\in U$,  $T_yU\simeq \R^d$ will be endowed with either the Euclidean metric $\euc$ or the pullback metric $\phi^\star\h$, with entries $\h_{ij}(y) = \h(\partial_i,\partial_j)|_{\phi(y)}$.
Since we can assume that $f_n(\M')$ are all contained in the same compact subset of $\phi(U)$, it follows that we can assume that all the entries $\h_{ij}$ and $\h^{ij}$ of the metric and its dual are uniformly continuous, and in particular uniformly bounded. 

The uniform boundedness of $\h_{ij}$ and $\h^{ij}$ implies that the norms on $T\M'\otimes\R^d$ and $T^*\M'\times \R^d$ induced by (i) $\g$ and $f_n^*\h$, (ii) $\g$ and $f^*\h$, and (iii) $\g$ and $\euc$ are all equivalent; moreover, the constants in these equivalences are independent of both $n$ and $x$. 

This implies that both weak and strong convergence in $L^q(\M';T^*\M\otimes \R^d)$ are the same with respect to either of those norms.

As distances in $T\M'\otimes\R^d$ with respect to $(\g,f_n^*\h)$ and $(\g,f^*\h)$ are equivalent, \eqref{eq:Lpconv_of_dist} implies that
\[
\dist_{(\g,f^*\h)}(d f_n, \SO{\g,f_n^*\h})\to 0 \qquad\text{in $L^p(\M')$}.
\]

The uniform boundedness of entries of $\h$ along with the uniform convergence of $f_n$ to $f$ implies that 
\[
\dist_{(\g,f^*\h)}(\SO{\g,f_n^*\h}, \SO{\g,f^*\h}) \to 0
\]
uniformly in $\M'$, where the distance here is the Hausdorff distance induced by $\dist_{(\g,\euc)}$. 
Hence 
\beq
\dist_{(\g,f^*\h)}(d f_n, \SO{\g,f^*\h})\to 0 \qquad\text{in $L^p(\M')$}.
\label{eq:df_n_to_SO_g_fh}
\eeq
Comparing  \eqref{eq:df_n_to_SO_g_fh} and \eqref{eq:Lpconv_of_dist}, we replaced the $n$-dependent set $\SO{\g,f_n^*\h}$ by the fixed set $\SO{\g,f^*\h}$ and the $n$-dependent metric induced by $\g$ and $f^*_n\h$ by the fixed metric induced by $\g$ and $f^*\h$. 
It follows from  \eqref{eq:df_n_to_SO_g_fh} that $df_n$ is uniformly bounded 
in $L^p(\M';T^*\M'\otimes\R^d)$.
Since, moreover, $f_n(\M')$ is uniformly bounded in $\R^d$, 
it follows that $f_n$ has a subsequence that weakly converges in $W^{1,p}(\M';\R^d)$.
Since weak convergence in $W^{1,p}(\M';\R^d)$ implies uniform convergence, the limit coincides with $f$.

Henceforth, denote by $E$ the vector bundle $T^*\M'\otimes \R^d$ with the metric induced by $\g$ and $f^*\h$.
Note that we view all the mappings $df_n$ as sections of the same vector bundle $E$, which is the key reason for using a local coordinate chart for $\N$.

The sequence $df_n$ satisfies the conditions of Theorem~\ref{thm:Young_meas_bundles}, including the boundedness in $L^1$ (since $df_n$ are bounded in $L^p$ and $\Vol(\M) < \infty$). Hence, there exists a subsequence $f_n$, and a family of Radon probability measures $(\nu_x)_{x\in\M'}$ on $E_{x}$, such that
\beq
\label{eq:young_meas_result_bundles}
\psi \circ df_n \weakly \brk{x\mapsto 
\int_{E_x} \psi_x(\lambda)\, d\nu_x(\lambda)} \quad \text{in}\,\,L^1(\M';W)
\eeq
for every Riemannian vector bundle $W\to\M'$ and every continuous bundle map $\psi:E\to W$, such that $\psi\circ df_n$ is sequentially weakly relatively compact in $L^1(\M';W)$. 
The idea  is to exploit the general relation \eqref{eq:young_meas_result_bundles} for various choices of $W$ and $\psi$.

First, consider \eqref{eq:young_meas_result_bundles} for $W=\R$ and $\psi = \dist_{(\g,f^*\h)}(\cdot, \SO{\g,f^*\h})$. The compactness condition is satisfied since $\psi\circ d f_n$ is bounded in $L^{p}(\M';\R)$ and $p>1$ \cite[p.~68]{LL01}.
We obtain that
\beq
\label{eq:young_meas_in_SO}
\dist_{(\g,f^*\h)}(df_n, \SO{\g,f^*\h}) \weakly \brk{x\mapsto \int_{E_x} \dist_{(\g,f^*\h)}(\lambda, \SO{\g,f^*\h})|_x\,d\nu_x(\lambda)}
\eeq
in $L^1(\M';\R)$. Multiplying by the test function $1 \in L^{\infty}(\M;\R)$ and integrating over $\M'$ we obtain
\[
\begin{split}
0 &= \lim_n \int_{\M'} \dist_{(\g,f^*\h)}(df_n, \SO{\g,f^*\h})\,\dVolg \\
&= \int_{\M'} \brk{\int_{E_x} \dist_{(\g,f^*\h)}(\lambda, \SO{\g,f^*\h})|_x\,d\nu_x(\lambda)} \,\dVolg|_x.
\end{split}
\]
This implies that $\nu_x$ is supported on $\SO{\g,f^*\h}_x$ for almost every $x\in \M'$.

Next, consider \eqref{eq:young_meas_result_bundles} for the following choices of $W$ and $\psi$,
\[  
\begin{array}{ll}
W=E &\psi=Id \\
W = \R &\psi=\Det \\
W =E &\psi = \Cof,
\end{array}
\]
where the determinant and the cofactor are defined with respect to the metric induced by $\g$ and $f^*\h$ (see Section~\ref{sec:det_cof} for intrinsic definitions of the determinant and the cofactor).
Since $p> d$, all three choices of $\psi$ imply that $\psi\circ df_n$ satisfy the $L^1$-weakly sequential compactness condition.

Therefore,
\beq
\label{eq:young_meas_det_cof_manifolds}
\begin{array}{rll}
df_n &\weakly  \brk{x\mapsto \int_{E_x} \lambda \,d\nu_x(\lambda)}  & \qquad \text{in}\,\,L^1(\M;E)   \\
\Det(df_n) &\weakly \brk{x\mapsto\int_{E_x} \Det(\lambda)|_x \,d\nu_x(\lambda)}  & \qquad\text{in}\,\, L^1(\M';\R)    \\
\Cof(df_n) &\weakly \brk{x\mapsto\int_{E_x} \Cof(\lambda)|_x \,d\nu_x(\lambda)} & \qquad\text{in}\,\, L^1(\M';E),
\end{array}
\eeq
where the dependence of $\Det(\lambda)$ and $\Cof(\lambda)$ on $x$ is via the metrics $\g$ and $f^*\h$.
Since $\nu_x$ is supported on $\SO{\g,f^*\h}$, and $\Det(\lambda)=1$ and $\Cof(\lambda)=\lambda$ for $\lambda\in \SO{\g,f^*\h}$, \eqref{eq:young_meas_det_cof_manifolds} reduces to
\beq
\label{eq:young_meas_det_cof_manifolds2}
\begin{array}{rll}
df_n &\weakly  \brk{x\mapsto \int_{\SO{\g,f^*\h}_x} \lambda \,d\nu_x(\lambda)}  &\qquad \text{in}\,\,L^1(\M';E)   \\
\Det(df_n) &\weakly 1  &\qquad \text{in}\,\, L^1(\M;\R)    \\
\Cof(df_n) &\weakly \brk{x\mapsto \int_{\SO{\g,f^*\h}_x} \lambda \,d\nu_x(\lambda)} &\qquad \text{in}\,\, L^1(\M';E).
\end{array}
\eeq

On the other hand, by the weak continuity of determinants and cofactors (see Proposition \ref{pn:abstract_weak_continuity_det_cof}),
\beq
\label{eq:lim_df_n_det_cof}
\begin{array}{rll}
	df_n &\weakly df  & \qquad\text{in}\,\, L^p(\M';E) \\
	\Det(df_n) &\weakly \Det(df) &  \qquad\text{in}\,\, L^{p/d}(\M';\R) \\
	\Cof(df_n) &\weakly \Cof(df) &\qquad \text{in}\,\, L^{p/(d-1)}(\M';E)
\end{array}
\eeq
Combining \eqref{eq:young_meas_det_cof_manifolds2} and \eqref{eq:lim_df_n_det_cof}, it follows from the uniqueness of the limit in $L^1$ that the following hold almost everywhere
\beq
\label{eq:df_in_SO_1}
\begin{split}
	df(x) &= \int_{\SO{\g,f^*\h}_x} \lambda \,d\nu_x(\lambda) \\
	\Det(df(x)) &= 1 \\
	\Cof(df(x)) &= \int_{\SO{\g,f^*\h}_x} \lambda \,d\nu_x(\lambda) = df(x).
\end{split}
\eeq
Since $\Cof(df) = df$ and $\Det(df)=1$ a.e., it follows from Corollary~\ref{cor:DetAndCof} that 
\beq
df\in \SO{\g,f^*\h} \qquad \text{a.e.}
\label{eq:df_in_SO_2}
\eeq
By \thmref{thm:Liouville_M_N_Lipschitz}, it follows that $f:\M \to \N$ is smooth as a map between manifolds with boundary.

\bigskip
\emph{Step III: $f_n\to f$ in the strong $W^{1,p}(\M;\N)$ topology}

We have thus far obtained that $f_n\to f$ uniformly,  that $f$ is a.e. differentiable and $df\in \SO{\g,f^*\h}$. 
We proceed to show that $f_n\to f$ (strongly) in $W^{1,p}(\M;\N)$.

As in Step~I, let $\iota:\N\to\R^D$ be a smooth isometric embedding and let $F_n = \iota\circ f_n$ and $F = \iota\circ f$.
By definition, $f_n\to f$ in $W^{1,p}(\M;\N)$ if $F_n\to F$ in $W^{1,p}(\M;\R^d)$ (Appendix~\ref{subsec:sobolev}).

We repeat a similar analysis as in Step~II for the sections $dF_n$ of $T^*\M\otimes \R^D$.
We obtain a family of Young probability measures $(\mu_x)_{x\in\M}$ on $T_x^*\M\otimes \R^D$ that correspond to $dF_n$.
That is,
\beq
\label{eq:young_meas_result_bundles_extrinsic}
\psi \circ dF_n \weakly \brk{x\mapsto 
\int_{T_x^*\M\otimes \R^D} \psi_x(\lambda)\, d\mu_x(\lambda)} \quad \text{in}\,\,L^1(\M;W)
\eeq
for every Riemannian vector bundle $W\to\M$ and every continuous bundle map $\psi:T^*\M\otimes \R^D\to W$, such that $\psi\circ dF_n$ is sequentially weakly relatively compact in $L^1(\M;W)$. 

Since $\|\dist(dF_n,\O{\g,\euc})\|_p\to 0$ (see \eqref{eq:dist_dF_n_O}), we obtain, by an analysis similar to that leading to \eqref{eq:young_meas_in_SO}, that $\mu_x$ is supported on $\O{\g_x,\euc}$ for almost every $x\in \M$.
As in \eqref{eq:df_in_SO_1}, we obtain
\[
dF_x = \int_{\O{\g,\euc}_x} \lambda \,d\mu_x(\lambda), \qquad\text{a.e.}
\]
We also know that $f$ is an isometric immersion, hence $dF\in \O{\g,\euc}$.
Since $\mu_x$ is a probability measure, we have just obtained that an element in  $\O{\g,\euc}_x$ is equal to a convex combination of elements in $\O{\g,\euc}_x$.
However, $\O{\g,\euc}_x$ is a subset of the sphere of radius $\sqrt{d}$ around the origin in $T_x^*\M\otimes \R^D$, and therefore, it is strictly convex.
It follows that the convex combination must be trivial, namely,
\[
\mu_x = \delta_{dF_x} \quad \text{a.e. in}\,\,\M,
\]
which together with \eqref{eq:young_meas_result_bundles_extrinsic} implies that
\[
\psi \circ dF_n \weakly \psi \circ dF \quad \text{in}\,\,L^1(\M;W).
\]

If we could take for $W = \R$ and $\xi\in T^*\M\otimes \R^D$,
\[
\psi(\xi) = |\xi - dF|^p,
\]
then we would be done, however, this function does not satisfy the sequential weak relative compactness condition. 
Hence, let
\[
\psi(\xi) = |\xi - dF|^p\, \vp\brk{\frac{|\xi|}{3\sqrt{d}}},
\] 
where $\vp:[0,\infty)\to[0,1]$ is continuous, compactly-supported and satisfies $\vp(t)=1$ for $t\le 1$ and $\vp(t)<1$ for $t>1$. This choice of $\psi$ satisfies the de la Vall\'ee Poussin criterion \eqref{eq:cond_L1_weak_comp}, and therefore  
\[
\psi\circ d F_n \weakly 0
\]
in $L^1(\M)$.
In particular, taking the test function $1\in L^\infty(\M)$,
\beq
\label{eq:psi_r_to_zero_manifolds_extrinsic}
\lim_n \int_{\M} \psi\circ dF_n\,\dVolg = 0.
\eeq

We now split the integral in \eqref{eq:psi_r_to_zero_manifolds_extrinsic} into integrals over two disjoint sets, $\M_n$ and $\M_n^c$, where
\[
\M_n = \{x\in\M ~:~ |(dF_n)_x| \le 3\sqrt{d}\}.
\]
By the definition of $\vp$, 
\[
\psi\circ dF_n = |dF_n - dF|^p
\qquad \text{in $\M_n$}.
\]
On the other hand, in $\M_n^c$,
\beq
\begin{split}
\label{eq:dist_large_matrix_SO_inequality_extrinsic}
|dF_n - dF| &\le |dF_n| + |dF| = |dF_n| + \sqrt{d} \le 2(|dF_n| - \sqrt{d}) \le 2\,\dist_{\g,\euc}(dF_n,\O{\g,\euc}),
\end{split}
\eeq
where the last inequality follows from the reverse triangle inequality. 

Combining \eqref{eq:psi_r_to_zero_manifolds_extrinsic} and \eqref{eq:dist_large_matrix_SO_inequality_extrinsic}, 
\[
\begin{split}
&\limsup_{n\to \infty} \int_{\M} |dF_n - dF|^p\, \dVolg \\
&\qquad = \limsup_{n\to \infty} \brk{\int_{\M_n} |dF_n - dF|^p\, \dVolg +  \int_{\M_n^c} |dF_n - dF|^p\, \dVolg} \\
&\qquad \le \limsup_{n\to \infty} \int_{\M} \psi \circ dF_n\, \dVolg + \limsup_{n\to \infty} \int_{\M_n^c} |dF_n - dF|^p\, \dVolg\\
&\qquad = \limsup_{n\to \infty} \int_{\M_n^c} |dF_n - dF|^p\, \dVolg\\
&\qquad \le \limsup_{n\to \infty} 2^p\int_{\M_n^c} \dist_{\g,\euc}^p(dF_n,\O{\g,\euc})\, \dVolg\\
&\qquad \le \limsup_{n\to \infty} 2^p\int_{\M} \dist_{\g,\euc}^p(dF_n,\O{\g,\euc})\, \dVolg = 0,
\end{split}
\]
where the last equality follows from \eqref{eq:dist_dF_n_O}.
Therefore, $dF_n \to dF$ in $L^p(\M;T^*\M\otimes \R^D)$. Since $F_n$ converges uniformly to $F$, we get that $F_n\to F$ in $W^{1,p}(\M;\R^D)$, and, by definition, $f_n\to f$ in $W^{1,p}(\M;\N)$.

\bigskip
\emph{Step IV: Extension to  $1\le p\le d$}

Suppose now that $p\ge 1$. The idea is to replace the functions $f_n$ by functions $f'_n$ that are more regular (specifically, uniformly Lipschitz), and then apply Steps I--III to the approximate mappings $f'_n$.

As in Step I of the proof, we choose a smooth isometric embedding $\iota:(\N,\h)\to (\R^D,\euc)$, and set $F_n = \iota \circ f_n:\M\to\R^D$.
Our assumptions on $f_n$ imply that $F_n\in W^{1,p}(\M;\R^D)$ (this is how $W^{1,p}(\M;\N)$ is defined), and  
\[
\dist(d F_n, \O{\g,\euc})\to 0 \qquad\text{in $L^p(\M)$}.
\]
As in Step~I, it follows that $dF_n$ has a weakly converging subsequence, and together with the Poinrcar\'e inequality, implies that $F_n$ has a subsequence weakly converging in $W^{1,p}(\M;\R^D)$. However, since $p<d$, convergence is not uniform, and the limit does not necessarily lie in the image of $\iota$.

To overcome this problem, we approximate the mappings $F_n$ by another sequence $F_n'\in W^{1,\infty}(\M;\N)$, using the following truncation argument \cite[Proposition A.1]{FJM02b}:

\begin{quote}
\emph{
Let $p\ge1$. 
There exists a constant $C$, depending only on $p$ and $\g$, such that for every $u\in W^{1,p}(\M;\R^D)$ and every $\lambda>0$, there exists $\tilde{u}\in W^{1,\infty}(\M;\R^D)$ such that  
\[
\|d\tilde{u}\|_\infty\le C\lambda, 
\]
\[
\Volg\brk{\BRK{x\in\M: \tilde{u}(x)\ne u(x) }} \le \frac{C}{\lambda^p}\int_{\BRK{|du(x)|>\lambda}} |du|^p \,\dVolg,
\]
\[
\|d\tilde{u}-du\|^p_p\le C\int_{\BRK{|du(x)|>\lambda}} |du|^p \,\dVolg.
\]
}
\end{quote}

The original proposition (\cite[Proposition A.1]{FJM02b}) refers to a bounded Lipschitz domain in $\R^d$, but the partition of unity argument used to obtain the result for an arbitrary Lipschitz domain (Step 3 in the proof) applies to any compact Riemannian manifold with Lipschitz boundary (the constant $C$ depends on the manifold, of course).

Let $\lambda>2\sqrt{d}$, so that $|A|>\lambda$, for $A\in \TstarM_x\otimes \R^D$, implies that $|A|<2\dist(A, \O{\g_x,\euc})$ (compare with \eqref{eq:dist_large_matrix_SO_inequality_extrinsic}). Applying the truncation argument to $F_n$,
we obtain mappings $\tilde{F}_n\in W^{1,\infty}(\M;\R^D)$, with a uniform Lipschitz constant $C$, such that
\beq
\begin{split}
  \Volg\brk{\BRK{x\in\M: \tilde{F}_n(x)\ne F_n(x) }} &\le C\int_{\BRK{|dF_n}  (x)|>\lambda} |dF_n|^p \,\dVolg \\
  &\le C\int_{\BRK{| dF_n  (x)|>\lambda}}  2^p\dist^p(dF_n, \O{\g,\euc}) \,\dVolg \\
	&\le  2^p  C\int_{\M} \dist^p(dF_n, \O{\g,\euc}) \,\dVolg,
\end{split}
\label{eq:F_n_ne_tilde_F_n}
\eeq
and
\beq
\label{eq:dF_n_minus_d_tilde_F_n}
\|d\tilde{F}_n-dF_n\|^p_p\le  2^p C\int_{\M} \dist^p(dF_n, \O{\g,\euc}) \,\dVolg.
\eeq
for some $C>0$, independent of $n$. In particular,
\beq
\lim_{n\to\infty} \Volg\brk{\{x\in\M: \tilde{F}_n(x)\ne F_n(x) \}} = 0
\Textand
\lim_{n\to\infty} \|d\tilde{F}_n-dF_n\|^p_p = 0.
\label{eq:replaces_both}
\eeq

Since $\dist(d F_n, \O{\g,\euc})\to 0$ in $L^p$, \eqref{eq:F_n_ne_tilde_F_n} implies that for every $\e>0$ and every ball $B\subset \M$ of radius $\e$, there exists, for sufficiently large $n$, a point $x\in B$ such that $\tilde F_n(x)=F_n(x)\in \iota(\N)$. 
Since $\M$ is compact and $\tilde{F}_n$ are uniformly Lipschitz, it follows that for large enough $n$, $\max_{x\in \M} \dist(\tilde{F}_n(x), \iota(\N)) < C\e$, and therefore
\[
\max_{x\in \M} \dist(\tilde{F}_n(x), \iota(\N))\to 0.
\]

Thus, for $n$ large enough, $\tilde{F}_n$ lies in a tubular neighborhood of $\iota(\N)$, in which the orthogonal projection $P$ onto $\iota(\N)$ is well-defined and smooth (and in particular Lipschitz).
We define $F'_n:= P\circ \tilde{F}_n$.
It immediately follows that $F'_n\in W^{1,\infty}(\M;R^D)$ are uniformly Lipschitz, and by definition, their image is in $\iota(\N)$.
Moreover, since 
\[
\{F'_n \ne \tilde{F}_n\}= \{\tilde{F}_n\notin \iota(\N)\} \subset \{\tilde{F}_n\ne F_n \} \textand
\{F'_n \ne F_n\} \subset \{\tilde{F}_n\ne F_n \},
\]
\eqref{eq:replaces_both} implies that
\beq
\begin{split}
\lim_{n\to\infty} \Volg\brk{\{F'_n\ne \tilde{F}_n \}} = \lim_{n\to\infty}  \Volg\brk{\{F'_n\ne F_n \}} = 0.
\end{split}
\label{eq:F_n_ne_F_prime_n}
\eeq
Since $\tilde F_n$ and $F'_n$ are uniformly Lipschitz, it follows that
\[
  \| dF'_n - d \tilde F_n\|_p^p = \int_{\BRK{F'_n\ne \tilde{F}_n }} |dF'_n - d \tilde F_n|^p\, \dVolg \le C \Volg\brk{\{F'_n\ne \tilde{F}_n \}}\to 0,
\]
where we used the fact that $dF'_n-d\tilde{F}_n=0$ almost everywhere on the set $F'_n-\tilde{F}_n=0$ (see \cite[Theorem 4.4]{CG15}).

Together with  \eqref{eq:replaces_both} we obtain that $\| dF'_n - d  F_n\|_p\to 0$.
Finally, since
\[
  \int_\M |F'_n(x)-F_n(x)|^p\,\dVolg \le 2^p\max_{y\in \N}|\iota(y)|^p\, \Volg\brk{\{F'_n\ne F_n \}}\to 0,
\]
we conclude that
\beq
\label{eq:F_prime_n_minus_F_n_in_Sobolev}
\lim_{n\to\infty}  \|F'_n - F_n\|_{W^{1,p}(\M;\R^D)} = 0.
\eeq

Next, define $f'_n=\iota^{-1}\circ F'_n$. 
By definition $f'_n\in W^{1,\infty}(\M;\N)$, and moreover, $f'_n$ are uniformly Lipschitz (since intrinsic and extrinsic distances in $\iota(\N)$ are equivalent).
Since $dF'_n=dF_n$ almost everywhere on the set $\BRK{F'_n= {F}_n }$ (again, \cite[Theorem 4.4]{CG15}), we have that $df'_n=df_n$ almost everywhere in the set $\BRK{f'_n= f_n }$.
Using the uniform bound on $df'_n$, we obtain
\beq
\begin{split}
&\int_\M \dist^p_{(\g,{f'}_n^*\h)}(d f'_n, \SO{\g,{f'}_n^*\h}) \, \dVolg \\
&\quad \le \int_{\BRK{f'_n= f_n}} \dist^p_{(\g,{f}_n^*\h)}(d f_n, \SO{\g,{f}_n^*\h}) \, \dVolg + C\,\Volg\brk{\BRK{f'_n\ne f_n }} \\
&\quad \le \int_{\M} \dist^p_{(\g,{f}_n^*\h)}(d f_n, \SO{\g,{f}_n^*\h}) \, \dVolg + C\,\Volg\brk{\BRK{f'_n\ne f_n }}  \to 0.
\end{split}
\label{eq:Lpconv_of_dist_truncation}
\eeq
Moreover, for any $p<q<\infty$,
\beq
\begin{split}
&\int_\M \dist^q_{(\g,{f'}_n^*\h)}(d f'_n, \SO{\g,{f'}_n^*\h}) \, \dVolg \\
&\quad \le \int_\M (|df'_n|+c)^{q-p}\dist^p_{(\g,{f'}_n^*\h)}(d f'_n, \SO{\g,{f'}_n^*\h}) \, \dVolg \\
&\quad \le C\int_\M \dist^p_{(\g,{f'}_n^*\h)}(d f'_n, \SO{\g,{f'}_n^*\h}) \, \dVolg  \to 0.
\end{split}
\label{eq:Lqconv_of_dist_truncation}
\eeq
Next, we apply Steps I, II and III of the proof with $f_n$ replaced by $f'_n$ and any $q>d$.
We obtain that $f'_n$ converge in $W^{1,q}(\M;\N)$ to a smooth isometric immersion $f:\M\to \N$ (or equivalently $F'_n\to \iota \circ f$ in $W^{1,q}(\M;\R^D)$). 
By \eqref{eq:F_prime_n_minus_F_n_in_Sobolev}, it follows that $F_n\to F$ in $W^{1,p}(\M;\R^D)$, so by definition, $f_n\to f$ in $W^{1,p}(\M;\N)$.

\bigskip
\emph{Step V: Proof that under additional assumptions $f$ is an isometry}

Suppose that $f_n(\pl \M)\subset \pl \N$ and $\Volh\N = \Volg\M$. To show that $f$ is an isometry, it suffices to show that $f|_{\M^\circ}$ is a surjective isometry $\M^\circ\to\N^\circ$. Indeed, if this is the case, then, since $f$ is continuous and $\M$ is compact, $f(\M)$ contains $\N^\circ$ and is closed in $\N$, i.e., $f(\M) = \N$. Finally, $f$ is an isometry, because for every  $x,y\in\M$, let $\M^\circ\ni x_n\to x$ and $\M^\circ\ni y_n\to y$; by the continuity of the metrics $d_\M$ and $d_\N$,
\[
d_{\N}(f(x),f(y)) =
\lim_{n \to \infty} d_{\N}(f(x_n),f(y_n)) =
\lim_{n \to \infty} d_{\M}(x_n,y_n) =d_{\M}(x,y).
\]
Note that the intrinsic distance function on $\M^\circ$ is the same as the extrinsic distance $d_\M$, and similarly for $\N$, so there is no ambiguity here regarding which metric we use.

We proceed to show that $f|_{\M^\circ}$ is a Riemannian isometry $\M^\circ\to\N^\circ$. Recall that $f:\M \to \N$ is smooth as a map between manifolds with boundary, and $df \in\SO{\g,f^*\h}$ is invertible at every point. Thus for any interior point $x\in \M^\circ$, the image $f(x)$ must be an interior point of $\N$, hence $f(\M^\circ) \subset \N^\circ$. Since (by the inverse function theorem) $f:\M^\circ \to \N^\circ$ is a local diffeomorphism and in particular  an open map, $f(\M^\circ)$ is open in $\N^\circ$.

Since $f_n\to f$ in $W^{1,p}(\M;\N)$, it follows from the trace theorem (when viewing $f_n$ as elements in $W^{1,p}(\M;\R^D)$) that $f_n|_{\pl\M}\to f|_{\pl\M}$ in $L^p(\pl\M;\R^D)$, and (after taking a subsequence) pointwise almost everywhere in $\pl\M$. Since $f_n(\pl \M)\subset \pl \N$, and since $\pl\N$ is closed and $f$ is continuous we conclude that $f(\pl\M)\subset \pl \N$.
The reason for adopting an extrinsic viewpoint in the last argument is that the trace theorem relies upon the density of smooth functions in $W^{1,p}$. This density does not hold for mappings between manifolds for $p<d$. Using a truncation argument here would result in losing the condition that $f(\partial\M)\subset \partial\N$.

Let $f(x_n)\in \N^\circ$ converges to $y\in \N^\circ$. 
Since $\M$ is compact and $f$ is continuous, we may assume, by taking a subsequence,  that $x_n\to x\in \M$, and $y=f(x)$.
Since $f(\pl\M) \subset\pl\N$ and $y\in\N^\circ$, it follows that $x\in \M^\circ$, i.e., $y\in f(\M^\circ)$, which implies that $f(\M^\circ)$ is closed in $\N^\circ$.
We have thus obtained that $f(\M^\circ)$ is clopen in $\N^\circ$.
Since $\N^\circ$ is connected, $f(\M^\circ)=\N^\circ$, i.e., $f|_{\M^\circ}$ is surjective.

It remains to prove that $f|_{\M^\circ}$ is injective; this is where we use a volume argument.
The area formula for $f$ implies that
\beq
\label{eq:area_formula_manifolds_f}
\Volg \M = \int_\M |\Det df| \,\dVolg = \int_\N |f^{-1}(y)| \,d\Volh|_y \ge \Volh\N,
\eeq
where $|f^{-1}(y)|$ denotes the cardinality of the inverse image of $y$, and 
the last inequality follows from the surjectivity of $f|_{\M^\circ}$.
Since, by assumption, $\Volh\N = \Volg\M$, \eqref{eq:area_formula_manifolds_f} is in fact an equality, hence  
\[
\Volh \brk{\BRK{ q\in \N~:~ |f^{-1}(q)|>1 }} = 0.
\]
It follows that $f$ is injective on $\M^\circ$.
Indeed, assume $f(p_1) =q = f(p_2)$, where $p_1 \neq p_2\in \M^\circ$ and $q\in \N^\circ$.
Since $f$ is a local diffeomorphism, there exist disjoint open neighborhoods $U_i\ni p_i$ and $V\ni q$ such that $f(U_i) = V$, hence 
\[
\Volh\brk{ \{ q\in \N : |f^{-1}(q)|>1 \}} \ge \Volh(V) > 0, 
\]
which is a contradiction.
This completes the proof.


\bigskip
\emph{Step VI: If $f_n$ are diffeomorphisms then $f_n(\pl \M)\subset \pl \N$ and $\Volh\N = \Volg\M$}

If $f_n$ are diffeomorphisms, then obviously $f_n(\pl \M)\subset \pl \N$, and therefore \eqref{eq:area_formula_manifolds_f} holds.
It remains to show that $\Volh\N = \Volg\M$, and by \eqref{eq:area_formula_manifolds_f}, it is enough to show that $\Volg\M\le \Volh\N$.

For $p\ge d$, the equality of volumes is straightforward: 
since $\M$ is connected, $f_n$ are either globally orientation-preserving or globally orientation-reversing. 
Since $\dist(\GLm,\SO{d})=c(d)>0$, an orientation-reversing diffeomorphism $\phi:\M\to\N$ satisfies
\[
\int_\M\dist^p_{\g,\phi^*\h}(d\phi,\SO{\g,\phi^*\h})\,\dVolg \ge c^p \Volg\M.
\]
By \eqref{eq:Lpconv_of_dist}, $f_n$ are orientation-preserving for large enough $n$.
If $p\ge d$, then $\Volg\M=\Volh\N$ follows from Lemma~\ref{lem:bound_vol_distortion_dist_SO2} and \eqref{eq:Lpconv_of_dist}.

For $p<d$, we can use the truncated mappings $f_n'$ defined in step IV to show that $\Volg \M \le \Volg \N$.
By \eqref{eq:Lqconv_of_dist_truncation}, $\dist_{(\g,{f'}_n^*\h)}(d f'_n, \SO{\g,{f'}_n^*\h})\to 0$ in $L^q$ for any $q\in[1,\infty)$, but $f_n'$ are not diffeomorphisms, so we cannot use the above reasoning directly.
However, \eqref{eq:Lqconv_of_dist_truncation} (with $q=d$) and Lemma~\ref{lem:bound_vol_distortion_dist_SO1} imply that $|\Det df_n'|\to 1$ in $L^1(\M)$. 
Therefore,
\[
\begin{split}
\Volg \M  &= \int_\M |\Det df_n'| \,\dVolg + o(1)\\
	& = \int_{\{f_n=f_n'\}} |\Det df_n| \,\dVolg + \int_{\{f_n\ne f_n'\}} |\Det df_n'| \,\dVolg+ o(1)\\
	&\le \int_{\{f_n=f_n'\}} |\Det df_n| \,\dVolg + C\Volg(\{f_n\ne f_n'\}) + o(1) \\
	&\teq{\eqref{eq:F_n_ne_F_prime_n}} \int_{\{f_n=f_n'\}} |\Det df_n| \,\dVolg + o(1)\\
	&\le \int_{\M} |\Det df_n| \,\dVolg + o(1) = \Volh(\N) + o(1),
\end{split}
\]
where in the first inequality we used the fact that $f_n'$ are uniformly Lipschitz.
Therefore $\Volg \M \le \Volg \N$, and together with \eqref{eq:area_formula_manifolds_f} we obtain that $\Volg\M= \Volg\N$.
\hfill\ding{110}

\bigskip
We conclude this section with a number of remarks concerning the assumptions in Theorem~\ref{thm:Reshetnyak_M_N}.

\begin{enumerate}
\item Neither of the assumptions $f_n(\pl \M)\subset \pl \N$ and $\Volh\N = \Volg\M$, which were used to prove that $f$ is an isometry, can be dropped.  
Take for example $\M=[-1,1]^d$, and let $\N=\M/\sim$ be the flat $d$-torus with $\sim$ the standard equivalence relation.
Then $f_n:\M\to \N$ given by $f_n(x)=(1-1/n)x$ are injective and satisfy \eqref{eq:Lpconv_of_dist}, but converge uniformly to $\pi:\M\to \N$ the quotient map, which is obviously not an isometry but merely an isometric immersion.
This example shows that the assumption $f_n(\pl \M)\subset \pl \N$ cannot be relaxed.
	
In order to see that the condition $\Volg\M=\Volh\N$ cannot be relaxed, recall that there is an isometric immersion from the circle of radius $2$ in $\R^2$ into the circle of radius $1$.

\item Yet another alternative condition implying that $f$ is an isometry is the following "symmetric condition": 
there exist surjective mappings $f_n\in W^{1,p}(\M;\N)$ and $g_n\in W^{1,p}(\N;\M)$ such that 
\[
\dist_{(\g,f_n^*\h)}(d f_n, \SO{\g,f_n^*\h})\to 0 \qquad\text{in $L^p(\M)$},
\] 
and 
\[
\dist_{(\h,g_n^*\g)}(dg_n, \SO{\h,g_n^*\g})\to 0 \qquad\text{in $L^p(\N)$}.
\]
The proof follows the same steps as Theorem~\ref{thm:Reshetnyak_M_N} for both $f_n$ and $g_n$, resulting in $f_n\to f$, $g_n\to g$, where $f$ and $g$ are surjective isometric immersions. 
It follows that $g\circ f:\M\to \M$ is a surjective isometric immersion, and therefore (\cite[Theorem 1.6.15]{BBI01}) it is a metric isometry. 
Then, $f:\M\to \N$ is a metric isometry, and by the Myers-Steenrod theorem, it is a Riemannian isometry.
	
\item Generally, the compactness of $\N$ is essential for the proof of Theorem~\ref{thm:Reshetnyak_M_N}. 
However, we used the compactness of $\N$ only in the following places: 
(i) in Step I of the proof, where we applied the Poincar\'e inequality for the global mappings $F_n$; 
(ii) in Step I again, for the equivalence of intrinsic and extrinsic distances when we isometrically embed $\N\subset \R^D$;
and (iii) in Step V, for obtaining \eqref{eq:F_prime_n_minus_F_n_in_Sobolev}.
Thus, the compactness of $\N$ can be replaced by alternative assumptions, as long as these three properties hold.
In particular, the following holds:

\begin{corollary}
\label{cor:Reshetnyak_M_R_d}
Let $(\M,\g)$ be a compact $d$-dimensional manifold with $C^1$ boundary. Let $p>1$ and let $f_n\in W^{1,p}(\M;\R^d)$ be a sequence of mappings such that 
\[
\dist_{(\g,\euc)}(d f_n, \SO{\g,\euc})\to 0 \qquad\text{ in $L^p(\M)$},
\]
and $\int_\M f_n \dVolg = 0$. 
Then $f_n$ has a subsequence converging in $W^{1,p}(\M;\R^d)$ to a limit $f$, which is a smooth isometric immersion. In particular, $\M$ is flat.
\end{corollary}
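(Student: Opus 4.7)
The plan is to adapt the proof of Theorem~\ref{thm:Reshetnyak_M_N} to the non-compact target $\N=\R^d$. Two key simplifications occur: since $\R^d$ is a vector space, no isometric embedding $\iota$ is needed, and all the pullback bundles $f_n^*T\R^d$ canonically coincide with the fixed trivial bundle $\M\times\R^d$; moreover, the set $\SO{\g,f_n^*\euc}$ depends only on $\g$, not on $n$. The compactness of $\N$ previously supplied the $L^\infty$ bound on $f_n$ that powered the Poincar\'e inequality in Step~I of that proof; here, that role is played by the zero mean condition $\int_\M f_n\,\dVolg=0$.

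First I would obtain a uniform $W^{1,p}$ bound on $f_n$. Since every element of $\SO{\g,\euc}_x$ has the same fixed norm with respect to the inner product induced by $(\g,\euc)$, assumption \eqref{eq:Lpconv_of_dist} implies a uniform $L^p$ bound on $df_n$; combined with $\int_\M f_n\,\dVolg=0$ and the Poincar\'e inequality on the compact manifold $(\M,\g)$, this bounds $\|f_n\|_{W^{1,p}(\M;\R^d)}$ uniformly. After extracting a subsequence, $f_n\weakly f$ in $W^{1,p}(\M;\R^d)$.

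For $p>d$, I would carry out Steps~II--III of the proof of Theorem~\ref{thm:Reshetnyak_M_N} directly on all of $\M$, with no local coordinate chart on the target required (the bundle $E=T^*\M\otimes\R^d$ is already fixed and global). The Young measure theorem (Theorem~\ref{thm:Young_meas_bundles}) produces a family $(\nu_x)_{x\in\M}$ supported on $\SO{\g_x,\euc}$; weak continuity of $\Det$ and $\Cof$ for $p>d$ forces $\Det df=1$ and $\Cof df=df$ almost everywhere, so $df\in\SO{\g,\euc}$ a.e.\ by Corollary~\ref{cor:DetAndCof}, and Theorem~\ref{thm:Liouville_M_N_Lipschitz} upgrades $f$ to a smooth isometric immersion. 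The strict convexity of $\SO{\g_x,\euc}$ inside a sphere in $T^*_x\M\otimes\R^d$ upgrades the weak convergence to strong $W^{1,p}$ convergence, exactly as in Step~III.

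For $1<p\le d$, I would apply the truncation procedure of Step~IV directly to $f_n$ (no projection onto an embedded image of $\N$ is required, since the target is already $\R^d$), producing uniformly Lipschitz $\tilde f_n\in W^{1,\infty}(\M;\R^d)$ with $\|\tilde f_n-f_n\|_{W^{1,p}}\to 0$ and $\Volg(\{\tilde f_n\ne f_n\})\to 0$. After subtracting their means (which converge to zero since $\tilde f_n=f_n$ on a set of almost full measure and $f_n$ has zero mean), the uniform Lipschitz bound promotes $L^p$-smallness of $\dist(d\tilde f_n,\SO{\g,\euc})$ to $L^q$-smallness for any $q>d$, so the $p>d$ argument applies to $\tilde f_n$ and yields a smooth isometric immersion $f$; since $\|\tilde f_n-f_n\|_{W^{1,p}}\to 0$, the same $f$ is the $W^{1,p}$-limit of the original $f_n$. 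Because $f^*\euc=\g$ and $(\R^d,\euc)$ is flat, $(\M,\g)$ is flat. The main subtlety I anticipate is tracking the zero mean condition and the Poincar\'e estimates cleanly through the truncation; everything else is a direct specialization of the general theorem.
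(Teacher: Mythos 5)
Your proposal is correct and follows the paper's intended argument: the paper's own ``proof'' is the one-line remark that the global and local stages can be merged (no chart on $\N=\R^d$ is needed since $T^*\M\otimes f_n^*T\R^d$ is already the fixed bundle $T^*\M\otimes\R^d$) and that compactness of $\N$ was used only for the Poincar\'e inequality on $F_n$, for the intrinsic/extrinsic distance comparison, and for Step~V's isometry conclusion --- the first of which is supplied by the zero-mean hypothesis and the last two of which are vacuous here. One point worth spelling out a bit more carefully: the claim that the means of $\tilde f_n$ tend to zero follows because $\tilde f_n - f_n$ vanishes on a set of measure at least $\tfrac12\Volg\M$ (for large $n$) while $\|d\tilde f_n - df_n\|_{L^p}\to 0$, so a Poincar\'e inequality for functions vanishing on a set of definite measure gives $\|\tilde f_n - f_n\|_{L^p}\to 0$; alternatively one can skip the mean subtraction entirely, since $\|\tilde f_n - f_n\|_{L^p}\to 0$, the Poincar\'e bound on $f_n$, and the uniform Lipschitz bound on $\tilde f_n$ already give a uniform $L^\infty$ bound on $\tilde f_n$.
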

In this case, the proof is in fact much simpler, since the global and local stages can be merged, and there is no need to locally replace $\h$ by $\euc$.
\end{enumerate}

\section{Applications to convergence of manifolds}
\label{sec:applications}

The following definition is motivated by a series of works on the homogenization of manifolds with distributed singularities, and structural stability of non-Euclidean elasticity \cite{KM15,KM16,KM15b}:

\begin{definition}
\label{df:Lpq_convergence_of_manifolds}
Let $(\M_n,\g_n)_{n\in\bbN}$ and $(\M,\g)$ be compact $d$-dimensional Riemannian manifolds (possibly with $C^1$ boundary).
We say that the sequence $\M_n$ converges to $\M$ with exponents $p,q$ if there exists a sequence of diffeomorphisms $F_n:\M\to\M_n$ such that
\beq
\label{eq:dist_p_F_n_vanishes}
\|\dist_{(\g,F_n^*\g_n)}(d F_n, \SO{\g,F_n^*\g_n})\|_{L^p(\M,\g)}\to 0,
\eeq
\beq
\label{eq:dist_p_F_n_inverse_vanishes}
\|\dist_{(\g_n,(F_n^{-1})^*\g)}(d F_n^{-1}, \SO{\g_n,(F_n^{-1})^*\g})\|_{L^p(\M_n,\g_n)}\to 0,
\eeq
and the volume forms converge, that is
\beq
\label{eq:Det_F_n_inverse_converges}
\|\Det F_n -1\|_{L^q(\M,\g)}\to 0, \qquad \|\Det F_n^{-1} -1\|_{L^q(\M_n,\g_n)} \to 0.
\eeq
\end{definition}

\begin{theorem}
\label{thm:uniqueness_of_limit_Lpq_convergence}
The convergence in Definition~\ref{df:Lpq_convergence_of_manifolds} is well-defined for $q>1$ and $p\ge 2+ 1/(q-1)$: if $(\M_n,\g_n)\to (\M,\g)$ and $(\M_n,\g_n)\to (\N,\h)$, then $(\M,\g)$ and $(\N,\h)$ are isometric.
\end{theorem}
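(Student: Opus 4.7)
The plan is to reduce the statement to Theorem~\ref{thm:Reshetnyak_M_N} applied to the composed diffeomorphisms $H_n := G_n^{-1}\circ F_n \colon \M \to \N$, where $F_n\colon \M \to \M_n$ realizes $(\M_n,\g_n)\to(\M,\g)$ and $G_n\colon \N \to \M_n$ realizes $(\M_n,\g_n)\to(\N,\h)$ in the sense of Definition~\ref{df:Lpq_convergence_of_manifolds}. Since each $H_n$ is a diffeomorphism, $H_n(\pl\M)\subset\pl\N$ automatically. For $n$ large enough $F_n$ and $G_n$ are orientation-preserving, by the argument used in Step~VI of the proof of Theorem~\ref{thm:Reshetnyak_M_N}; then the $L^q$-convergence \eqref{eq:Det_F_n_inverse_converges} (and its analog for $G_n$), together with the change-of-variables formula $\Vol_{\g_n}\M_n = \int_\M \Det dF_n\,\dVolg$, gives $\Volg\M = \lim_n\Vol_{\g_n}\M_n = \Volh\N$. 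It therefore suffices to show that $\dist(dH_n,\SO{\g,H_n^*\h})\to 0$ in $L^r(\M,\g)$ for some $r\ge 1$, after which the ``moreover'' clause of Theorem~\ref{thm:Reshetnyak_M_N} yields that $(\M,\g)$ and $(\N,\h)$ are isometric.

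The key is a pointwise chain-rule type bound. Writing $dH_n|_x = dG_n^{-1}|_{F_n(x)}\circ dF_n|_x$ and taking nearest orientation-preserving isometries $R_1\in\SO{\g_x,(\g_n)_{F_n(x)}}$, $R_2\in\SO{(\g_n)_{F_n(x)},\h_{H_n(x)}}$, so that $R_2\circ R_1\in\SO{\g_x,\h_{H_n(x)}}$, the identity $dH_n|_x - R_2\circ R_1 = (dG_n^{-1}|_{F_n(x)} - R_2)\circ dF_n|_x + R_2\circ(dF_n|_x - R_1)$, together with $|R_2|=\sqrt{d}$ and submultiplicativity of the Frobenius norm, gives
\[
\dist(dH_n,\SO{\g,H_n^*\h}) \le \alpha_n\,|dF_n| + \sqrt{d}\,\beta_n,
\]
where $\alpha_n := \dist(dG_n^{-1},\SO{\g_n,(G_n^{-1})^*\h})\circ F_n$ and $\beta_n := \dist(dF_n,\SO{\g,F_n^*\g_n})$. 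The summand $\sqrt{d}\,\beta_n$ tends to $0$ in every $L^r(\M,\g)$ with $r\le p$ by \eqref{eq:dist_p_F_n_vanishes}. For the first summand, a change of variables via $F_n$ gives
\[
\int_\M \alpha_n^{r_1}\,\dVolg = \int_{\M_n} \dist(dG_n^{-1},\SO{\g_n,(G_n^{-1})^*\h})^{r_1}\,|\Det dF_n^{-1}|\,\dVol_{\g_n},
\]
and Hölder's inequality with conjugate exponents $p/r_1$ and $p/(p-r_1)$ bounds this by
\[
\|\dist(dG_n^{-1},\SO{\g_n,(G_n^{-1})^*\h})\|_{L^p(\M_n,\g_n)}^{r_1}\,\|\Det dF_n^{-1}\|_{L^{p/(p-r_1)}(\M_n,\g_n)}.
\]
The first factor tends to $0$ by the analog of \eqref{eq:dist_p_F_n_inverse_vanishes} for $G_n$; the second is uniformly bounded in $n$ whenever $p/(p-r_1)\le q$, equivalently $r_1\le p(q-1)/q$, by \eqref{eq:Det_F_n_inverse_converges}. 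Hence $\alpha_n\to 0$ in $L^{p(q-1)/q}(\M,\g)$, while $|dF_n|\le\sqrt{d}+\beta_n$ is uniformly bounded in $L^p(\M,\g)$.

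A final application of Hölder with $r_1 = p(q-1)/q$ and $r_2 = p$ gives $\alpha_n\,|dF_n|\to 0$ in $L^r(\M,\g)$ with
\[
\frac{1}{r} = \frac{q}{p(q-1)} + \frac{1}{p} = \frac{2q-1}{p(q-1)}, \qquad\text{i.e.}\qquad r = \frac{p(q-1)}{2q-1}.
\]
The threshold $r\ge 1$ is equivalent to $p(q-1)\ge 2q-1$, that is, to the hypothesis $p\ge 2+1/(q-1)$, and this is the sole, sharp, point at which that hypothesis enters. The main obstacle is arranging the two Hölder applications so that the $L^p$ controls on the differentials and the $L^q$ controls on the Jacobians combine into this single sharp threshold; once $r\ge 1$ is secured, Theorem~\ref{thm:Reshetnyak_M_N} applies directly to $(H_n)$ and yields the desired isometry of $(\M,\g)$ and $(\N,\h)$.
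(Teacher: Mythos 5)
Your proposal is correct and follows essentially the same approach as the paper: reduce to Theorem~\ref{thm:Reshetnyak_M_N} for $H_n = G_n^{-1}\circ F_n$, establish the equality of volumes from \eqref{eq:Det_F_n_inverse_converges}, use the same pointwise chain-rule estimate with nearest rotations, and combine Hölder with a change of variables to get $L^r$-decay with $r = p(q-1)/(2q-1)$. The only superficial difference is the order of the two Hölder applications (you first estimate $\|\alpha_n\|_{L^{r_1}}$ by transplanting to $\M_n$, then split $\alpha_n|dF_n|$; the paper first splits $\alpha_n|dF_n|$ and then transplants), which produces identical exponents and the same threshold $p\ge 2+1/(q-1)$; the extra $\sqrt{d}$ you pick up from submultiplicativity (rather than isometry-invariance of the Frobenius norm) is harmless.
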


Note that if $p\ge d$, then \eqref{eq:dist_p_F_n_vanishes} and \eqref{eq:dist_p_F_n_inverse_vanishes} imply \eqref{eq:Det_F_n_inverse_converges} for $q=p/d$;
this follows from Lemma~\ref{lem:bound_vol_distortion_dist_SO1}.
Thus, the convergence in  Definition~\ref{df:Lpq_convergence_of_manifolds} is well-defined for $p\ge  d$ and $p\ge 2 + 1/(p/d-1)$, which after a short calculation amounts to  $p\ge \frac{1}{2}(d+2+\sqrt{d^2+4})$.

\begin{proof}
Assume that $\M_n\to \M$ with respect to $F_n:\M\to \M_n$, whereas $\M_n\to \N$ with respect to $G_n:\N\to \M_n$.
By the same argument as 
in the first comment below the proof of  Theorem~\ref{thm:Reshetnyak_M_N},
we may assume that both $F_n$ and $G_n$ are orientation-preserving for every $n$.

Eq.~\eqref{eq:Det_F_n_inverse_converges} for $F_n^{-1}$ implies that
\[
  \lim_n \Vol_{\g_n}\M_n = \lim_n \int_\M \Det F_n\,\Volg = \int_\M \Volg = \Volg\M.
\] 
By symmetry,
\beq
\label{eq:volume_equalities}
\Volg\M = \lim_n \Vol_{\g_n}\M_n= \Volh\N.
\eeq

Define the sequence of diffeomorphisms $H_n=G_n^{-1}\circ F_n:\M\to \N$.
We will show that
\[
\dist(dH_n,\SO{\g,H_n^*\h}) \to 0
\qquad \text{in $L^r(\M)$}
\]
for some $r\ge 1$. 
By Theorem~\ref{thm:Reshetnyak_M_N}, it follows that $\M$ and $\N$ are isometric.

Denote by $q_n\in \Gamma(\SO{\g,F_n^*\g_n}) \subset\Gamma(T^*\M\otimes F_n^*T\M_n)$ the section satisfying
\[
|dF_n - q_n| = \dist(dF_n,\SO{\g,F_n^*\g_n}),
\]
and by $r_n\in \Gamma(\SO{\g_n,(G_n^{_1})^*\h}) \subset\Gamma(T^*\M_n\otimes (G_n^{-1})^*T\N)$ the section satisfying
\[
|dG_n^{-1} - r_n| = \dist(dG_n^{-1},\SO{\g_n,(G_n^{-1})^*\h}).
\]
Then, since $dH_n = F_n^*dG_n^{-1}\circ dF_n$ and $F_n^*r_n\circ q_n \in\Gamma(T^*\M\otimes H_n^*T\N)$,
\beq
\begin{split}
\dist(dH_n,\SO{\g,H_n^*\h}) &\le |F_n^*dG_n^{-1}\circ dF_n  - F_n^*r_n\circ q_n|  \\
&\hspace{-2cm} = |(dF_n^*dG_n^{-1} - F_n^*r_n)\circ dF_n + F_n^*r_n\circ (dF_n - q_n)| \\
&\hspace{-2cm} \le |(dF_n^*dG_n^{-1} - F_n^*r_n)\circ dF_n| + |F_n^*r_n\circ (dF_n  - q_n)| \\
& \hspace{-2cm} \le F_n^*|dG_n^{-1} - r_n |\,  |dF_n| + |dF_n - q_n| \\
& \hspace{-2cm} = F_n^* \dist(dG_n^{-1},\SO{\g_n,(G_n^{-1})^*\h})\, |dF_n| + \dist(dF_n,\SO{\g,F_n^*\g_n}).
\end{split}
\label{eq:dist_Hn_SO_estimate}
\eeq
In the passage to the third line we used the triangle inequality; in the passage to the fourth line we used the fact that $F_n^*r_n$ is an isometry and the sub-multiplicativity of the Frobenius norm; in the passage to the fifth line we used the defining properties of $r_n$ and $q_n$.
 
By the definition of $p,q$-convergence, the second term on the right-hand side of \eqref{eq:dist_Hn_SO_estimate} tends to zero in $L^p(\M)$. Thus, it suffices to prove that
\beq
\label{eq:dist_Hn_SO_main_part}
F_n^*\dist(dG_n^{-1},\SO{\g_n,(G_n^{-1})^*\h})\,|dF_n|\to 0 \quad \text{in} \,\, L^r(\M)
\eeq
for some $1\le r < p$.

Since 
\[
|dF_n| \le \dist_{(\g,F_n^*\g_n)}(d F_n, \SO{\g,F_n^*\g_n}) + C(d),
\]
it follows that $\|dF_n\|_{L^p(\M,\g)}$ is uniformly bounded; the same holds for $\|dF_n^{-1}\|_{L^p(\M_n,\g_n)}$. Note that we used here the boundedness of $\Vol_{\g_n}\M_n$ in order to control the norm of $C(d)$ uniformly.

Using these observations along with H\"older's inequality, we obtain
\beq
\label{eq:very_long_Holder_calc}
\begin{split}
  & \| F_n^*\dist(dG_n^{-1},\SO{\g_n,(G_n^{-1})^* \h})\,|dF_n|\|_{L^r(\M,\g)} \\
&\quad \le \| F_n^*\dist(dG_n^{-1},\SO{\g_n,(G_n^{-1})^* \h})\|_{L^{rp/(p-r)}(\M,\g)} \,\| dF_n\|_{L^{p}(\M,\g)} \\
&\quad \le C\,\| F_n^*\dist(dG_n^{-1},\SO{\g_n,(G_n^{-1})^* \h})\|_{L^{rp/(p-r)}(\M,\g)} \\
&\quad = C\brk{\int_{\M_n}\dist^{pr/(p-r)}(dG_n^{-1},\SO{\g_n,(G_n^{-1})^* \h}) \,\frac{(F_n^{-1})^\star d\Vol_{\g}}{\dVoln} \,
\dVoln}^{(p-r)/rp} \\
&\quad \le C\left\| \dist(dG_n^{-1},\SO{\g_n,(G_n^{-1})^* \h})\right\|_{L^{qrp/(q-1)(p-r)}(\M_n,\g_n)} \,
			\left\|\frac{(F_n^{-1})^\star d\Vol_{\g}}{\dVoln} \right\|^{(p-r)/rp}_{L^{q}(\M_n,\g_n)} \\
&\quad \le  C\left\| \dist(dG_n^{-1},\SO{\g_n,(G_n^{-1})^* \h})\right\|_{L^{prq/(q-1)(p-r)}(\M_n,\g_n)}
\end{split}
\eeq
where $C>0$ is an appropriate constant varying from line to line.
Now choose $r= p/(2+1/(q-1))\ge 1$. 
Then $prq/(q-1)(p-r)\le p$, hence \eqref{eq:very_long_Holder_calc} reads
\[
\begin{split}
\| F_n^*\dist(dG_n^{-1},\SO{\g_n,\h})\,|dF_n|\|_{L^r(\M,\g)}
\le C\left\| \dist(dG_n^{-1},\SO{\g_n,\h})\right\|_{L^{p}(\M_n,\g_n)} \to 0.
\end{split}
\]
Therefore \eqref{eq:dist_Hn_SO_main_part} holds and the proof is complete.
\end{proof}

\begin{remark}
Instead of \eqref{eq:Det_F_n_inverse_converges}, it is sufficient to assume that $\Vol_{\g_n}\M_n$ and $\|\Det F_n^{-1} -1\|_{L^q(\M_n,\g_n)}$ are bounded. 
Equation \eqref{eq:volume_equalities} in no longer valid, however \eqref{eq:Lpconv_of_dist} still holds for $H_n$ and some exponent $r\ge 1$,
so $\M$ and $\N$ are isometric by Theorem~\ref{thm:Reshetnyak_M_N}.
\end{remark}

\begin{example}
We now sketch two examples of convergence of manifolds according to Definition~\ref{df:Lpq_convergence_of_manifolds}.
In the first one, the limit coincides with the Gromov-Hausdorff limit; 
in the second the two limits are different.
Other examples, related to dislocation theory, can be found in \cite{KM15,KM15b}.

Note that these two examples involve singular metrics; in order to use the uniqueness result (Theorem~\ref{thm:uniqueness_of_limit_Lpq_convergence}) one needs to consider a smoothed version of the sequence. 
This can easily be done without changing the limit.
Note also that both examples are two-dimensional; this is for the sake of simplicity; both have higher dimensional generalizations.

\begin{enumerate}
\item Let $(\M,\g)$ be a two-dimensional Riemannian manifold.
For each $n$, choose a geodesic triangulation of $\M$, such that all the edge lengths are in $[1/n,3/2n]$.
In particular, the angles in all the triangles are bounded away from zero and $\pi$, uniformly in $n$. 
Denote the triangles by $(T_{n,i})_{i=1}^{k_n}$.
Construct $(\M_n,\g_n)$ by replacing each triangle $T_{n,i}$ with a Euclidean triangle $R_{n,i}$ having the same edge lengths.

Let $F_{n,i}:T_{n,i}\to R_{n,i}$ be a smooth diffeomorphism that preserves lengths along the edges of the triangles.
Since $T_{n,i}$ are very small, with angles bounded away from zero, they are "almost Euclidean", so $F_{n,i}$ can be chosen such that 
\[
\dist(dF_{n,i}, \SO{\g,F_{n,i}^*\g_n}),\,\, \dist(dF_{n,i}^{-1},\SO{\g_n,(F_{n,i}^{-1})^*\g}) < \frac{C}{n}
\]
for some $C>0$ independent of $n$.
$F_n:\M\to \M_n$ is then defined as the union of $F_{n,i}$.
The above bound on the distortion implies that $\M_n\to \M$ according to Definition~\ref{df:Lpq_convergence_of_manifolds} for every choice of exponents $p,q$ (including $p=q=\infty$).
$F_n$ are also maps of vanishing distortion in the metric-space sense, that is
\[
\max_{x,y\in \M} \left| d_\M(x,y) - d_{\M_n}(F_n(x),F_n(y)) \right| \to 0,
\]
and therefore $\M_n\to \M$ also in the Gromov-Hausdorff metric 
(see \cite{KM15b} for a similar construction). 

\item Let $\M=[0,1]^2$ endowed with the standard Euclidean metric.
Fix, say, $\e=1/10$. For every $n\in\bbN$,
define a discontinuous metric $\g_n$ on $\M$ as follows:
\[
\g_n(x,y) = \Cases{\e\euc &  (x,y)\in D \\
\euc & \text{otherwise},}
\]
where $(x,y)\in D$ if and only if there exists a $j\in\{1,\dots,n\}$ such that $|x - j/n| < 1/2n^2$ or $|y - j/n| < 1/2n^2$.
That is, $\g_n = \euc$ everywhere except on a set of $n$ horizontal and $n$ vertical strips of width $1/n^2$, in which it is shrunk isotropically by a factor $\e$. 
Let $F_n:(\M,\euc)\to (\M,\g_n)$ be the map $x\mapsto x$. 
Then $dF_n\in \SO{\euc,\g_n}$ everywhere except for a set of volume of order $1/n$; on that ``defective" set, $\dist(dF_n, \SO{\e,\g_n})$ is a constant independent of $n$.
The same properties apply for $F_n^{-1}$. It follows that $(\M,\g_n)\to (\M,\euc)$ according to Definition~\ref{df:Lpq_convergence_of_manifolds} for every choice of $p,q<\infty$.
On the other hand, $(\M,\g_n)$ converges in the Gromov-Hausdorff sense to the ``taxi-driver" $\ell^1$ metric on $[0,\e]^2$.

Note that there exist $\alpha>0$ and $p,q$ such that if we take $\e_n=n^{-\alpha}$ rather than a fixed $\e$, $(\M,\g_n)\to (\M,\euc)$ according to Definition~\ref{df:Lpq_convergence_of_manifolds}, whereas 
The Gromov-Hausdorff limit of this sequence is just the point.
\end{enumerate}
\end{example}

{\bfseries Acknowledgements}
We are grateful to Pavel Giterman, Amitai Yuval and Yael Karshon for useful discussions.
We also thank Deane Yang for suggesting the current form of \lemref{lem:Cofactor_grad_Determinant_bundle}.
This research was partially supported by the Israel Science Foundation (Grant No. 661/13), and by a grant from the Ministry of Science, Technology and Space, Israel and the Russian Foundation for Basic Research, the Russian Federation.

\appendix

\section{Sobolev spaces between manifolds}
\label{subsec:sobolev}

The following definitions and results are well-known; see \cite{Haj09, Weh04} for proofs and for further references.

Let $\M, \N$ be compact Riemannian manifolds, and let $D$ be large enough such that there exists an isometric embedding $\iota:\N\to \R^D$ (Nash's theorem).
For $p\in[1,\infty)$, we define the Sobolev space $W^{1,p}(\M;\N)$ by
\[
W^{1,p}(\M;\N) = \BRK{u:\M\to \N\,:\,\iota\circ u\in W^{1,p}(\M;\R^D)}. 
\]
This space inherits the strong and weak topologies of $W^{1,p}(\M;\R^D)$, which are independent of the embedding $\iota$.

Generally, these spaces are larger than the closure of  $C^\infty(\M;\N)$ in the strong/weak $W^{1,p}(\M;\R^D)$ topology.
However, when $p\ge d= \dim \M$, $W^{1,p}(\M;\N)$ is the strong closure of $C^\infty(\M;\N)$ in the strong topology \cite[Theorem 2.1]{Haj09}.

By the standard Sobolev embedding theorems, it follows that for $p>d$, $W^{1,p}(\M;\N)$ consists of continuous functions whose image is in $\N$ everywhere.
Moreover, $W^{1,p}(\M;\N)$ convergence implies uniform convergence for $p>d$.
Therefore, when $p>d$, $W^{1,p}(\M;\N)$ can be defined ``locally", namely
\[
\begin{split}
W^{1,p}(\M;\N) &= \Big\{u\in C(\M;\N) : \phi\circ u\in W^{1,p}(u^{-1}(U),\R^d), \\
&\qquad \,\,\text{for every local chart $\phi:U\subset\N\to \R^d$} \Big\}.
\end{split}
\]
In addition, $u_n\to u$ in $W^{1,p}(\M;\N)$ if and only if $u_n\to u$ uniformly, and 
$\phi\circ u_n \to \phi\circ u$ in $W^{1,p}(u^{-1}(U),\R^d)$ in every coordinate patch \cite[Lemmas B.5 and B.7]{Weh04}.

Moreover,  it follows from \cite[Theorem~4.9]{Hei05} that for $p>d$, $u\in W^{1,p}(\M;\N)$ is differentiable almost everywhere and that its strong and weak derivatives coincide almost everywhere. 
In particular, there is no ambiguity in Theorem~\ref{thm:weak_Piola_identity} and Theorem~\ref{thm:Liouville_M_N_Lipschitz}. 

Finally, note that for every $p\ge 1$ (including $p\le d$), there is a notion of weak derivative $du$ of $u\in W^{1,p}(\M;\N)$ (and not only of $\iota\circ u$), which is measurable as a function $T\M\to T\N$ \cite{CV16}. This implies, using local coordinates, that our energy density $x\mapsto \dist_{(\g_x,\h_{u(x)})} (du(x),\SO{\g_x,\h_{u(x)}})$ is indeed a measurable function for every $u\in W^{1,p}(\M;\N)$.

\section{Intrinsic determinant and cofactor}
\label{app:det_cof}

In this section we state and prove some useful properties of the determinant and cofactor operators defined in Section~\ref{sec:det_cof}.
Most of this section is linear algebra, which we include here for the sake of completeness.

\begin{proposition}
\label{pn:Det_and_volumes_1}
Let $V$ and $W$ be $d$-dimensional oriented inner-product spaces. Let $\star_V$ and $\star_W$ be their Hodge-dual operators. The inner-products and the orientations induce volume forms $\dVol_V$ and $\dVol_W$ (i.e., $\dVol_V(e_1,\dots,e_d)=1$ for every positively-oriented orthonormal basis of $V$).
Let $T\in\Hom(V,W)$. 
Then,
\[
\Det T = \frac{T^* \dVol_W}{\dVol_V}.
\]
\end{proposition}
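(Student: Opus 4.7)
The plan is to reduce both sides to the classical matrix determinant by fixing positively-oriented orthonormal bases $\{e_i\}_{i=1}^d$ of $V$ and $\{f_j\}_{j=1}^d$ of $W$, writing $Te_i=\sum_j T^j_i f_j$, and showing separately that each side equals $\det[T^j_i]$.

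First, I would evaluate the right-hand side. Since both $\dVol_V$ and $\dVol_W$ are top forms normalized to $1$ on positively-oriented orthonormal frames, the pullback satisfies
\[
(T^*\dVol_W)(e_1,\ldots,e_d)=\dVol_W(Te_1,\ldots,Te_d)=\det[T^j_i]
\]
by the standard multilinear-algebra computation (expand each $Te_i$ in the $\{f_j\}$ basis and use antisymmetry). Since $T^*\dVol_W$ is a top form on $V$, it must equal $\det[T^j_i]\,\dVol_V$, so the quotient $T^*\dVol_W/\dVol_V$ equals the scalar $\det[T^j_i]$.

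Next, I would compute the left-hand side by unpacking the Hodge stars appearing in Definition~\ref{def:intrinsic_det}. Under the identifications $\Lambda_0(V)\simeq\R\simeq\Lambda_0(W)$, the operator $\star^0_V$ sends $1$ to the unit positively-oriented $d$-vector $e_1\wedge\cdots\wedge e_d$, and $\star^d_W$ sends $f_1\wedge\cdots\wedge f_d$ back to $1$. Then
\[
\extp^d T\,(e_1\wedge\cdots\wedge e_d)=Te_1\wedge\cdots\wedge Te_d=\det[T^j_i]\,f_1\wedge\cdots\wedge f_d,
\]
by the same antisymmetric expansion, and applying $\star^d_W$ yields $\Det T=\det[T^j_i]$. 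Comparing with the previous paragraph gives the claim.

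There is no real obstacle: the proposition is essentially a bookkeeping exercise in unfolding the Hodge-star and wedge-power definitions once orthonormal bases are chosen. The only point worth flagging is that the Hodge-star convention used here acts on the tangent (rather than cotangent) exterior algebra, so one must check that the identifications $\star^0_V(1)=e_1\wedge\cdots\wedge e_d$ and $\star^d_W(f_1\wedge\cdots\wedge f_d)=1$ really do hold with this convention; this is immediate from the defining normalization that the Hodge star sends the positively-oriented unit element of $\Lambda_k$ to the positively-oriented unit element of $\Lambda_{d-k}$.
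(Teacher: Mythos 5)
Your proof is correct and follows essentially the same route the paper indicates: the paper's proof consists of the single sentence that the claim is immediate from the definitions upon choosing positively-oriented orthonormal bases for $V$ and $W$, and your argument is exactly the explicit unfolding of that remark, reducing both $\Det T$ and $T^*\dVol_W/\dVol_V$ to the classical matrix determinant $\det[T^j_i]$.
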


The proof is immediate from the definitions, by choosing oriented orthonormal bases for $V$ and $W$. An immediate corollary is the following:
\begin{corollary}
\label{pn:Det_and_volumes}
Let $f:\M\to \N$, then 
\[
\Det df = \frac{f^\star d\Volh}{\dVolg}
\]
\end{corollary}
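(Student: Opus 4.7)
The plan is to deduce this corollary by a pointwise application of Proposition~\ref{pn:Det_and_volumes_1}. At each point $x\in\M$, the differential $df_x$ is a linear map between the oriented $d$-dimensional inner-product spaces $(T_x\M,\g_x)$ and $(T_{f(x)}\N,\h_{f(x)})$, so Proposition~\ref{pn:Det_and_volumes_1} applies verbatim with $V=T_x\M$, $W=T_{f(x)}\N$, $T=df_x$, yielding
\[
\Det df_x \;=\; \frac{(df_x)^*\dVol_{\h_{f(x)}}}{\dVol_{\g_x}}.
\]

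The only thing to verify is that the right-hand side agrees, as a ratio of top forms on $\M$, with $f^\star d\Volh/\dVolg$ at $x$. This is a matter of unwinding definitions: by definition of pullback of forms, $(f^\star d\Volh)_x = (df_x)^* (d\Volh)_{f(x)}$, and $(d\Volh)_{f(x)} = \dVol_{\h_{f(x)}}$ since $d\Volh$ is the pointwise Riemannian volume form of $(\N,\h)$; similarly $(\dVolg)_x = \dVol_{\g_x}$. Since both top forms live in the one-dimensional space $\Lambda^d(T_x^*\M)$, their ratio is a well-defined scalar, and equals $\Det df_x$ by the pointwise identity above.

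Because $x\in\M$ was arbitrary, this yields the global identity $\Det df = f^\star d\Volh/\dVolg$ as functions on $\M$. There is no substantive obstacle here; the proof is essentially a bookkeeping step confirming that the intrinsic determinant of Definition~\ref{def:intrinsic_det}, which is defined fiberwise, assembles into the expected Jacobian function when applied to the differential of a smooth map. The only mild subtlety is to note that the pullback of a top form and the Hodge-star construction used in the definition of $\Det$ are consistent with the same choice of orientation and inner-product structure, which is guaranteed by the assumption that both $\M$ and $\N$ are oriented Riemannian manifolds.
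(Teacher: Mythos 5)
Your proof is correct and takes the same approach the paper intends: the paper states Corollary~\ref{pn:Det_and_volumes} as an immediate consequence of Proposition~\ref{pn:Det_and_volumes_1}, and your argument simply spells out the pointwise application at each $x\in\M$, which is exactly the content being implicitly invoked.
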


\begin{proposition}
The following identity holds:
\label{pn:Cofactor_formula}
\[\Det A \, \id_V = A^T \circ \Cof A = (\Cof A)^T \circ A,\]
\end{proposition}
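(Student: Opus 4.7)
The strategy is to reduce $A^T \circ \Cof A = (\Det A)\id_V$ to checking the scalar identity
\[
\langle \Cof A(v_1), A(v_2)\rangle_W = (\Det A)\,\langle v_1, v_2\rangle_V
\qquad \text{for all } v_1, v_2 \in V,
\]
which is simply the componentwise form of the desired equality. The second identity $(\Cof A)^T \circ A = (\Det A)\,\id_V$ then follows at once by taking the transpose of the first, since $\id_V$ is self-adjoint and $\Det A$ is a scalar.

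The key calculation is to evaluate $A(v_2)\wedge \extp^{d-1}A(\star_V v_1) \in \extp^d W$ in two different ways. On the one hand, since $\extp A$ is an algebra homomorphism on exterior algebras, this wedge equals $\extp^d A\bigl(v_2 \wedge \star_V v_1\bigr)$. The defining property of the Hodge star gives $v_2 \wedge \star_V v_1 = \langle v_1, v_2\rangle_V \,\dVol_V$, and the definition of $\Det A$ immediately yields $\extp^d A(\dVol_V) = (\Det A)\,\dVol_W$ (this follows from $\star_V^0(1) = \dVol_V$ and $\star_W^d(\dVol_W)=1$). Hence this side equals $\langle v_1, v_2\rangle_V \,(\Det A)\,\dVol_W$.

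On the other hand, for any $u \in W$ and $\omega \in \extp^{d-1}W$, a short manipulation using $\star_W^2 = (-1)^{d-1}\id$ on $\extp^1 W$ yields
\[
u \wedge \omega = (-1)^{d-1}\,\langle u, \star_W\omega\rangle_W\,\dVol_W.
\]
Applying this with $u = A(v_2)$ and $\omega = \extp^{d-1}A(\star_V v_1)$, and invoking the definition $\Cof A(v_1) = (-1)^{d-1}\star_W\extp^{d-1}A(\star_V v_1)$, the two $(-1)^{d-1}$ signs cancel, and the expression becomes $\langle A(v_2), \Cof A(v_1)\rangle_W\,\dVol_W$. Equating the two evaluations of the wedge concludes the argument.

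The only real delicacy is sign-tracking through the Hodge-star identities; the factor $(-1)^{d-1}$ in the definition of $\Cof A$ is placed precisely to cancel the analogous sign from $\star\star = (-1)^{d-1}$ on $\extp^1$, so that the classical matrix identity $A^T(\cof A) = (\det A)\,I$ is recovered exactly without extra signs.
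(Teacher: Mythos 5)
Your proof is correct and is essentially the same as the paper's: both reduce the identity to the scalar equation $\langle \Cof A(v), Au\rangle_W = (\Det A)\,\langle v,u\rangle_V$, and both establish it by evaluating the wedge $Au \wedge \extp^{d-1}A(\star_V v)$ using the Hodge identity $\langle v,w\rangle = \star(v\wedge\star w)$, the sign rule $\star_W^1\star_W^{d-1} = (-1)^{d-1}$, and the algebra-homomorphism property of $\extp A$. The only difference is presentational (the paper writes a single chain of equalities, you equate two evaluations of the wedge), and your appeal to transposition for the second equality matches what the paper leaves implicit.
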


\begin{proof}
Let $v,u\in V$. Then,
\[
\begin{split}
(A^T\circ \Cof A(v),u)_V &= (\Cof A (v),Au)_W \\
&=  (-1)^{d-1} (\star_W^{d-1} \extp^{d-1}A \star_V^1 v,Au)_W  \\
& =  (-1)^{d-1}\star_W^d \brk{Au \wedge \star_W^1 \star_W^{d-1} \extp^{d-1}A \star_V^1 v} \\
&=  \star_W^d (Au \wedge \extp^{d-1}A \star_V^1 v)  \\
&= \star_W^d \brk{\extp^dA \brk{u \wedge \star_V^1 v}} \\
&= \star_W^d \brk{\extp^dA \star_V^0 \star_V^d \brk{u \wedge \star_V^1 v}} \\
&= \brk{ \star_W^d \extp^dA \star_V^0} \IP{u}{v}_V \\
&= \Det A \IP{u}{v}_V,
\end{split}
\]
where the passage to third line follows from the identity 
\[
\IP{v}{w}_{\Lam_p(V)} = \star_V^d(v \wedge \star_V^p w)
\] 
for $v,w \in \Lam_p(V)$.
Hence, for every $v\in V$,
\[
A^T\circ \text{Cof} A (v) = \Det A \,\id_{V}(v).
\]
\end{proof}

The following lemma is useful for proving the weak convergence of $\Cof df_n$ and $\Det df_n$:

\begin{lemma}
\label{lm:Cof_vs_cof}
Let $(V,\g)$ and $(W,\h)$ be $d$-dimensional oriented inner-product spaces.
Let $b=(b_1,\ldots,b_d)$ and $c=(c_1,\ldots,c_d)$ be arbitrary bases for $V$ and $W$.
Let $F\in\Hom(V,W)$, and let $A$ be its matrix representation in the given bases.
Denote by $A^T$, $\Cof A$ and $\Det A$ the matrix representations of $F^T$, $\Cof F$ and $\Det F$ in the given bases.
Denote by $A^t$, $\cof A$ and $\det A$ the transpose, cofactor and determinant of the \emph{matrix} $A$ (that is, the ``standard" linear-algebraic meaning of these notions).
Denote by $G$ and $H$ the matrix representations of $\g$ and $h$.
Then,
\beq
A^T = G^{-1} A^t H,  
\label{eq:ATformula}
\eeq
\beq
\Det A = \sqrt{\frac{\det H}{\det G}} \det A,
\label{eq:DetAformula}
\eeq
and
\beq
\Cof A = \sqrt{\frac{\det H}{\det G}} H^{-1} \cof A\, G.
\label{eq:CofAformula}
\eeq
\end{lemma}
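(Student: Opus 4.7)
The plan is to establish the three identities in order \eqref{eq:ATformula}, \eqref{eq:DetAformula}, \eqref{eq:CofAformula}, since each builds on the previous one together with Propositions~\ref{pn:Det_and_volumes_1} and \ref{pn:Cofactor_formula}. Throughout, write $Fb_i=\sum_j A_{ji}c_j$ so that $A$ is the matrix of $F$, and let $b^*,c^*$ denote the dual bases of $b,c$.

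For \eqref{eq:ATformula}, I use the defining relation $\IP{F^Tw}{v}_\g=\IP{w}{Fv}_\h$. Writing both sides in coordinates via $G$ and $H$ gives $v^tG(A^Tw)=(Av)^tHw=v^tA^tHw$ for all coordinate vectors $v$, hence $GA^T=A^tH$, which rearranges to \eqref{eq:ATformula}. For \eqref{eq:DetAformula}, I invoke Proposition~\ref{pn:Det_and_volumes_1}, which identifies $\Det F$ with the ratio of pulled-back volume forms. In terms of the chosen (not necessarily orthonormal) bases, $\dVol_V=\sqrt{\det G}\,b_1^*\wedge\cdots\wedge b_d^*$ (up to the orientation sign, which I absorb into the convention that $b,c$ are written in a consistent order), and analogously for $\dVol_W$. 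A direct computation gives $F^*(c_1^*\wedge\cdots\wedge c_d^*)=(\det A)\,b_1^*\wedge\cdots\wedge b_d^*$, and combining these yields \eqref{eq:DetAformula}.

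For \eqref{eq:CofAformula}, the clean route is algebraic. Proposition~\ref{pn:Cofactor_formula} states that $\Det F\cdot\id_V=F^T\circ\Cof F$, which in our matrices reads $(\Det A)\,I=A^T\,\Cof A$. Substituting \eqref{eq:ATformula} and \eqref{eq:DetAformula} gives
\[
\sqrt{\tfrac{\det H}{\det G}}(\det A)\,I=G^{-1}A^tH\,\Cof A.
\]
Using the classical matrix identity $A^t\cof A=(\det A)I$, so that $(A^t)^{-1}=\cof A/\det A$ whenever $A$ is invertible, solving for $\Cof A$ gives exactly \eqref{eq:CofAformula}.

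The only mild obstacle is the invertibility hypothesis used in the last step. Since both sides of \eqref{eq:CofAformula} are polynomial in the entries of $A$ (the matrix cofactor $\cof A$ is polynomial, and $\Cof A$ is polynomial because it is built from $\extp^{d-1}A$ and the fixed Hodge stars), the identity on the open dense set of invertible $A$ extends by continuity to all $A\in\Hom(V,W)$. This completes the proof.
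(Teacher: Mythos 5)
Your proof is correct and follows essentially the same route as the paper: \eqref{eq:ATformula} by writing the adjoint relation in coordinates, \eqref{eq:DetAformula} from Proposition~\ref{pn:Det_and_volumes_1} via the volume forms, and \eqref{eq:CofAformula} by substituting the first two into Proposition~\ref{pn:Cofactor_formula}. You add one worthwhile detail the paper leaves implicit: the solve for $\Cof A$ needs $A$ invertible, and you correctly close that gap by observing both sides are polynomial in $A$ and extending by density.
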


\begin{proof}
Let $v\in V$ and $w\in W$. By definition $\h(Fv,w) = \g(v,F^Tw)$.
Moving to coordinates and writing this in matrix form, this reads
\[
v^t A^t H w = v^t G A^T w,
\]
from which \eqref{eq:ATformula} follows immediately.
Equation~\ref{eq:DetAformula} follows from Proposition \ref{pn:Det_and_volumes_1}. Using these two identities, \eqref{eq:CofAformula} follows from Proposition \ref{pn:Cofactor_formula} by a direct calculation.
\end{proof}

\begin{proposition}
\label{pn:abstract_weak_continuity_det_cof}

Let $\M,\N$ be $d$-dimensional oriented Riemannian manifolds, and let $\M$ be compact.
Let $f_n\in W^{1,p}(\M; \N)$ with $p>d$. 
If $f_n\weakly f$ in $W^{1,p}(\M; \N)$, then
\[
\Det df_n \weakly \Det df \quad\text{in $L^{p/d}(\M)$},
\]
and 
\[
\Cof df_n \weakly  \Cof df \quad\text{in $L^{p/(d-1)}(\M;\N)$},
\]
where the last convergence is understood locally, for every coordinate chart around $x\in \M$ and $f(x)\in \N$. 

\end{proposition}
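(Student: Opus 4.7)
The plan is to reduce the statement to the classical weak continuity of matrix determinants and cofactors of Sobolev maps between Euclidean domains, by localizing both source and target and using the uniform convergence that comes for free from $p>d$.

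First, since $p>d$, the Sobolev embedding and the discussion in Appendix~\ref{subsec:sobolev} give that $f_n\to f$ uniformly on $\M$. Fix $x\in \M$ and a positively oriented coordinate chart $\phi: U\subset\R^d\to \N$ around $f(x)$. Choose a neighborhood $\M'\ni x$ with $\overline{f(\M')}\subset\phi(U)$; by uniform convergence, $f_n(\M')\subset\phi(U)$ for all $n$ sufficiently large. Similarly pick an oriented chart on $\M'$. In these charts, write $f_n$ and $f$ as maps into an open subset of $\R^d$, and let $G$, $H_n$, $H$ denote the matrix representations of $\g$, $f_n^*\h$, $f^*\h$. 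Since $f_n\to f$ uniformly and $\h$ is smooth, the matrix-valued functions $H_n$, $H_n^{-1}$, $\sqrt{\det H_n/\det G}$ converge uniformly on $\M'$ to their counterparts for $f$, and are uniformly bounded.

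Second, by Lemma~\ref{lm:Cof_vs_cof}, in these charts
\[
\Det df_n = \sqrt{\tfrac{\det H_n}{\det G}}\,\det(\partial f_n),
\qquad
\Cof df_n = \sqrt{\tfrac{\det H_n}{\det G}}\, H_n^{-1}\,\cof(\partial f_n)\, G,
\]
where $\det(\partial f_n)$ and $\cof(\partial f_n)$ are the ordinary matrix determinant and cofactor of the Jacobian matrix $\partial f_n$ with respect to the chosen coordinates. Thus $\Det df_n$ and $\Cof df_n$ are products of factors that converge uniformly (hence strongly in every $L^r$) with the Euclidean Jacobian minors of $f_n$.

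Third, the weak continuity of Euclidean minors is classical: if $f_n\weakly f$ in $W^{1,p}(\M';\R^d)$ with $p>d$, then $\det(\partial f_n)\weakly\det(\partial f)$ in $L^{p/d}(\M')$ and $\cof(\partial f_n)\weakly \cof(\partial f)$ in $L^{p/(d-1)}(\M';\R^{d\times d})$ (see e.g.\ the standard null-Lagrangian / polyconvexity results of Ball and Reshetnyak). Multiplying a weakly convergent sequence in $L^{p/d}$ or $L^{p/(d-1)}$ by a uniformly convergent (hence strongly convergent in $L^\infty$) sequence preserves weak convergence in the same exponent. Combining this with the formulas above yields $\Det df_n\weakly \Det df$ in $L^{p/d}(\M')$ and $\Cof df_n\weakly\Cof df$ in $L^{p/(d-1)}(\M')$ in the chart, which is exactly the local statement asserted.

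The only slightly delicate point is the justification of the uniform convergence of $H_n^{\pm 1}$ and of the matrix Jacobians being weakly convergent in the chart — both rest on $p>d$, which supplies uniform convergence of $f_n$ and also the density of smooth maps in $W^{1,p}(\M;\N)$ used to reduce the Euclidean weak continuity to its standard statement. I do not expect a serious obstacle; all technicalities are bookkeeping around passing between the intrinsic operators and their chart representatives.
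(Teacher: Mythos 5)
Your proof is correct and follows essentially the same route as the paper: localize using uniform convergence from $p>d$, invoke Lemma~\ref{lm:Cof_vs_cof} to relate $\Det df_n,\Cof df_n$ to the Euclidean matrix minors of the coordinate representatives, and apply the classical weak continuity of minors. You are in fact slightly more careful than the paper's proof in noting that the prefactors $H_n^{\pm1},\sqrt{\det H_n/\det G}$ form a uniformly (hence $L^\infty$-strongly) convergent sequence rather than a single fixed smooth multiplier, which is the cleaner way to justify passing the weak limit through the product.
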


\begin{proof}
The case $\M\subset \R^d$, $E=\R^d$ is a classical result in the theory of Sobolev mappings, see e.g. \cite[Section 8.2.4]{Eva98} (this reference only considers the determinant, however the same proof applies for the cofactor matrix).

Still in a compact Euclidean setting, if a sequence $g_n\in L^q(\W)$ weakly converges to $g$ in $L^{q}(\W)$ for some $q$, then $\phi g_n\weakly \phi g$ in $L^q(\W)$ for every smooth function $\phi$.
The proposition follows now from Lemma \ref{lm:Cof_vs_cof} by working in local coordinates and using the Euclidean result, since $H$, $G$, their inverses and determinants are all smooth functions of the coordinates, and $f_n\to f$ uniformly.
\end{proof}

\subsection{Derivative of the determinant: Proof of Lemma~\ref{lem:Cofactor_grad_Determinant_bundle}}
\label{app:deriv_det_bundle}

\lemref{lem:Cofactor_grad_Determinant_bundle} is concerned with the differentiation of the determinant of a bundle morphism between vector bundles. Since the intrinsic definition of the determinant (\ref{def:intrinsic_det}) involves the Hodge-dual, we first prove Lemma~\ref{lem:Hodge_star_cov_diff_commute} below regarding the behavior of the Hodge operator with respect to covariant differentiation. 

Let $\M$ be a smooth $d$-dimensional manifold.
Let $E$ be an oriented vector bundle over $\M$ (of arbitrary finite rank $n$), endowed with a Riemannian metric $\h$ and a metric affine connection $\nabla^E$. 
Note that
$\nabla^E$ induces a connection on $\Lambda_k(E)$ (also denoted by $\nabla^E$); this induced connection is compatible with the metric induced on $\Lambda_k(E)$ by $\h$.

\begin{lemma}[Hodge-dual commutes with covariant derivative]
\label{lem:Hodge_star_cov_diff_commute}
Let $(E,\M)$ be defined as above.  
Denote by $\star^k_E$ the fiber-wise Hodge-dual $\Lambda_k(E)\to \Lambda_{n-k}(E)$ (which is induced by the orientation on $E$ and $\h$).
Then, 
\[
\star^k_{E} (\nabla^E_X  \beta)=\nabla^E_X (\star^k_{E} \beta)
\] 
for every $\beta \in \Gamma(\Lambda_k(E))$ and $X \in \Gamma(T\M)$.
\end{lemma}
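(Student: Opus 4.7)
The plan is to verify the identity pointwise by reducing it to the characterizing property of the Hodge dual: for $\alpha,\beta \in \Lambda_k(E_x)$,
\[
\alpha \wedge \star^k_{E} \beta = \IP{\alpha}{\beta}_\h \, \omega,
\]
where $\omega \in \Gamma(\Lambda_n(E))$ is the unit-length, positively oriented $n$-form on $E$ determined by $\h$ and the orientation. The key observation is that $\omega$ is parallel, i.e., $\nabla^E \omega = 0$. Indeed, since $\nabla^E$ is metric-compatible, parallel transport along any curve is an isometry of the fibers of $E$; being continuous, it also preserves the chosen orientation; hence it preserves the unit-length oriented top form, so $\omega$ is covariantly constant.

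Given this, I would apply $\nabla^E_X$ to both sides of the defining identity. On the left, I use the Leibniz rule for the extension of $\nabla^E$ to $\Lambda^\bullet(E)$ (which is immediate, since $\nabla^E$ on $\Lambda_k(E)$ is the tensor-product connection restricted to the alternating subbundle). On the right, metric compatibility gives $X\IP{\alpha}{\beta}_\h = \IP{\nabla^E_X \alpha}{\beta}_\h + \IP{\alpha}{\nabla^E_X \beta}_\h$, while $\nabla^E_X \omega = 0$. This produces
\[
(\nabla^E_X \alpha)\wedge \star^k_{E}\beta + \alpha \wedge \nabla^E_X(\star^k_{E}\beta) = \IP{\nabla^E_X\alpha}{\beta}_\h\, \omega + \IP{\alpha}{\nabla^E_X\beta}_\h\, \omega,
\]
and the right-hand side rewrites, by applying the defining property of $\star^k_E$ twice, as $(\nabla^E_X\alpha)\wedge \star^k_{E}\beta + \alpha\wedge \star^k_{E}(\nabla^E_X\beta)$. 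Cancelling the first terms, I obtain
\[
\alpha \wedge \bigl(\nabla^E_X(\star^k_{E}\beta) - \star^k_{E}(\nabla^E_X\beta)\bigr) = 0
\]
for every $\alpha \in \Lambda_k(E_x)$. The non-degeneracy of the pairing $\Lambda_k(E_x) \times \Lambda_{n-k}(E_x) \to \Lambda_n(E_x)$ then forces $\nabla^E_X(\star^k_{E}\beta) = \star^k_{E}(\nabla^E_X\beta)$ at $x$, and since $x$ and $X$ are arbitrary, the lemma follows.

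The main thing to be careful about is the justification $\nabla^E \omega = 0$; everything else is a routine Leibniz computation. A completely equivalent alternative I would keep in mind, in case a frame-based argument is preferred, is to fix $p \in \M$, choose an oriented orthonormal basis of $E_p$ and radially parallel-transport it along geodesics of $\M$ centered at $p$ to obtain a local positively oriented orthonormal frame $\{e_i\}$ of $E$ with $\nabla^E_Y e_i|_p = 0$ for every $Y \in T_p\M$. In such a frame the Hodge star maps basis multivectors $e_I$ to $\pm e_{I^c}$ by a signed permutation with locally constant coefficients, so both sides of the claimed identity reduce at $p$ to the same combination of derivatives of the component functions of $\beta$. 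I would present the intrinsic argument above, as it avoids any frame choice.
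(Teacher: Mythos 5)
Your proof is correct and follows essentially the same route as the paper's: both reduce the identity to the Leibniz rule, metric compatibility, and the fact that the positively oriented unit top multivector $\omega=\star^0_E(1)$ is parallel, followed by cancellation and the non-degeneracy of the wedge pairing. The only cosmetic difference is that the paper first explicitly reduces the statement to the case $k=0$ before observing $\nabla^E\omega=0$, whereas you fold that observation directly into the single computation.
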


\emph{Proof:} 
 We first show that it suffices to prove this lemma for $k=0$. That is, assume that for every $\xi\in C^\infty(\M)\simeq \Gamma(\Lambda_0(E))$ and $X \in \Gamma(T\M)$,
\beq
\label{app:eq:zerocase}
\star^0_{E} (\nabla^E_X(\xi)) = \nabla^E_X (\star^0_{E}(\xi)).
\eeq

Let $\alpha,\beta\in  \Gamma(\Lambda_k(E))$ and let $X \in \Gamma(T\M)$.
By the Leibniz rule for covariant differentiation and the definition of the Hodge-dual, 
\beq 
\label{app:eq:eq_comm_star_deriv_a}
\begin{split}
\nabla^E_X (\alpha \wedge \star^k_E \beta) &=
\nabla^E_X \alpha \wedge \star^k_E \beta + 
\alpha \wedge \nabla^E_X(\star^k_E \beta) \\
&=  \star^0_E (\nabla^E_X \alpha,\beta)_\h  +  \alpha \wedge \nabla^E_X(\star^k_E \beta).
\end{split}
\eeq
On the other hand,
\beq 
\label{app:eq:eq_comm_star_deriv_b}
\begin{split}
\nabla^E_X (\alpha \wedge \star^k_E \beta) 
&=\nabla^E_X (\star^0_E (\alpha,\beta)_\h) \\
&= \star^0_E \brk{\nabla^E_X  (\alpha,\beta)_\h} \\
&=\star^0_E \brk{(\nabla^E_X \alpha,\beta)_\h +  (\alpha, \nabla^E_X \beta)_\h} \\
& = \star^0_E  (\nabla^E_X \alpha,\beta)_\h +  \alpha\wedge \star^k_E (\nabla^E_X \beta),
\end{split}
\eeq
where the passage from the first to the second line uses \eqref{app:eq:zerocase} for $\xi = (\alpha,\beta)_\h$.
Equalities \eqref{app:eq:eq_comm_star_deriv_a} and \eqref{app:eq:eq_comm_star_deriv_b} imply that
\[
\alpha \wedge \nabla^E_X(\star^k_E \beta) =  \alpha\wedge \star^k_E (\nabla^E_X \beta).
\]
Since this holds for every $\alpha\in \Gamma(\Lambda_k(E))$, we conclude that
$\nabla^E_X(\star^k_E \beta) = \star^k_E (\nabla^E_X \beta)$.

Thus, we turn to prove \eqref{app:eq:zerocase}. Let $\beta \in C^\infty(\M)\simeq \Gamma(\Lambda_0(E))$, and note that
\beq
\label{app:eq:eq_comm_star_deriv_c}
\star^0_E (\nabla^E_X  \beta)= (\nabla^E_X  \beta) \,\star^0_E(1), 
\eeq
where $\star^0_E(1)$ is the positive unit $d$-dimensional multivector.  Likewise,
\beq 
\label{app:eq:eq_comm_star_deriv_d}
\nabla^E_X (\star^0_{E}(\beta)) = \nabla^E_X (\beta\, \star^0_{E}(1)) =
(\nabla^E_X \beta)\, \star^0_{E}(1) + \beta\, \nabla^E_X (\star^0_{E}(1)).
\eeq
Comparing \eqref{app:eq:eq_comm_star_deriv_c} and \eqref{app:eq:eq_comm_star_deriv_d}, we conclude that \eqref{app:eq:zerocase} holds for every $\beta$ if and only if
\beq
\nabla^E_X (\star^0_{E}(1)) = 0,
\label{app:eq:zerocasereduction}
\eeq
which is indeed the case, because $\star^0_{E}(1)$ is the unit $d$-dimensional multivector and $\nabla^E$ is consistent with the metric.
\hfill\ding{110}

\emph{Proof of Lemma~\ref{lem:Cofactor_grad_Determinant_bundle}:}
   Let $e_1,\dots,e_d$ be a positive orthonormal frame of $E$. 
  \[
    \Det(A)=   \star^d_{F} \circ \extp^d A \circ \star^0_{E}(1)=   \star^d_W \extp^d A \big( e_1 \wedge \dots \wedge e_d \big)= \star^d_{F} \big( Ae_1 \wedge \dots \wedge Ae_d \big)
  \]

  Using the Leibniz rule for the wedge product, we get
\beq
\label{eq:eq_deriv_det}
\begin{split}
  V\Det A &= V \star^d_{F}\big( Ae_1 \wedge \dots \wedge Ae_d \big) \stackrel{(1)}{=}   \star^d_{F}  \nabla_V \big( Ae_1 \wedge \dots \wedge Ae_d \big) \\
 	&=  \star^d_{F}  \sum_{i=1}^d  Ae_1 \wedge \dots \wedge \nabla_V (Ae_i) \wedge \dots \wedge Ae_d   \\
	&= \star^d_{F}  \sum_{i=1}^d  Ae_1 \wedge \dots \wedge (\nabla_V A)e_i \wedge \dots \wedge Ae_d  + \star^d_{F}  \sum_{i=1}^d  Ae_1 \wedge \dots \wedge  A(\nabla_{V}e_i) \wedge \dots \wedge Ae_d,
\end{split}
\eeq
Where equality $(1)$ follows from \lemref{lem:Hodge_star_cov_diff_commute}. (Here we used the metricity of the connection on $F$). 

Analyzing the second summand, we get
\[
\begin{split}
 \star^d_{F}  \sum_{i=1}^d \extp^d A(e_1 \wedge \dots \wedge \nabla_Ve_i \wedge \dots \wedge e_d)&= 
 \star^d_{F} \extp^d A( \sum_{i=1}^d  e_1 \wedge \dots \wedge \nabla_Ve_i \wedge \dots \wedge e_d) \\
 &=
 \star^d_{F} \extp^d A\big( \nabla_V  (e_1 \wedge \dots \wedge e_i \wedge \dots \wedge e_d) \big)=0,
 \end{split}
 \]
where in the last equality we used the metricity of the connection on $E$.

After eliminating the second summand,  \eqref{eq:eq_deriv_det} becomes
\[
\begin{split}
V\Det A &= \star^d_{F}  \sum_{i=1}^d  Ae_1 \wedge \dots \wedge (\nabla_V A)e_i \wedge \dots \wedge Ae_d \\
	& =   \sum_{i=1}^d \star^d_F (-1)^{i-1} \big( (\nabla_V A)e_i \wedge A e_1  \wedge \dots \wedge \widehat Ae_i \wedge \dots \wedge Ae_d \big) \\
	& = \sum_{i=1}^d \star^d_F (-1)^{i-1} \big( (\nabla_V A)e_i \wedge  (-1)^{d-1} \star^1_F \star^{d-1}_F ( A e_1 \wedge \dots \wedge \widehat Ae_i \wedge \dots \wedge Ae_d) \big)  \\
 	&  =  (-1)^{d-1} \sum_{i=1}^d (-1)^{i-1} \star^d_F  \big( (\nabla_V A)e_i \wedge  \star^1_F \star^{d-1}_F ( A e_1 \wedge \dots \wedge \widehat Ae_i \wedge \dots \wedge Ae_d) \big)  \\
	 &   = (-1)^{d-1} \sum_{i=1}^d (-1)^{i-1} \IP{(\nabla_V A)e_i}{ \star^{d-1}_F ( A e_1 \wedge \dots \wedge \widehat Ae_i \wedge \dots \wedge Ae_d)}_F  \\ 
	 &= (-1)^{d-1} \sum_{i=1}^d \IP{(\nabla_V A)e_i}{ \star^{d-1}_F \big( \extp^{d-1} A ( \star_V^1 e_i )\big)}_F\\
	 &=\sum_{i=1}^d \IP{(\nabla_V A)e_i}{  (-1)^{d-1}  \star^{d-1}_F  \extp^{d-1} A \star_V^1 e_i }_F\\
 	&= \sum_{i=1}^d \IP{(\nabla_V A)e_i}{  \Cof A(e_i) }_F 
	=\sum_{i=1}^d \IP{(\Cof A)^T(\nabla_V A)e_i}{  e_i }_E \\
	&=\tr\left( \Cof A^T \circ (\nabla_V A) \right)
	= \IP{\Cof A}{\nabla_V A}_{E,F}.
  \end{split}
  \]
\hfill\ding{110}


\section{Volume distortion and $\dist\brk{\cdot,\SOd}$}
Let $A \in M_d$ be a linear transformation. $A$ maps the unit cube  into a body whose volume is $\det A$. We may therefore view $|\det A-1|$ as a measure of \textbf{volume distortion} of A's action. Intuitively, when $A$ is close to an (orientation-preserving) isometry, its volume distortion should be small. 
The following lemma is a quantitative formulation of this claim:

\begin{lemma}
 \label{lem:bound_vol_distortion_dist_SO1}
Let $A \in M_d$. Then 
\[
|\det A - 1| \le  \brk{\dist\brk{A,\SOd} +1}^d-1
\]
\end{lemma}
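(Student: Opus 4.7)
The plan is to reduce the problem to bounding $|\det(I + C) - 1|$ for a matrix $C$ whose Frobenius norm equals $\dist(A, \SO{d})$, and then to expand the determinant as an elementary symmetric polynomial in the eigenvalues of $C$.

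First, by compactness of $\SO{d}$, pick $R \in \SO{d}$ achieving the distance, $|A - R| = \dist(A, \SO{d}) =: \delta$, where $|\cdot|$ denotes the Frobenius norm. Setting $C := R^T A - I$, we have $A = R(I + C)$, and since $\det R = 1$,
\[
\det A = \det(I + C).
\]
Moreover, by unitary invariance of the Frobenius norm, $|C| = |R^T(A - R)| = |A - R| = \delta$.

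Second, expand the determinant in the form
\[
\det(I + C) = \sum_{k=0}^d e_k(\lambda_1, \ldots, \lambda_d),
\]
where $\lambda_1, \ldots, \lambda_d$ are the (possibly complex) eigenvalues of $C$ and $e_k$ is the $k$-th elementary symmetric polynomial; in particular $e_0 = 1$.

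Third, estimate each summand. Since $|e_k(\lambda_1,\ldots,\lambda_d)| \le \binom{d}{k} \max_i |\lambda_i|^k$, and the spectral radius is bounded by the operator norm, which in turn is bounded by the Frobenius norm, we get $\max_i |\lambda_i| \le \|C\|_{\mathrm{op}} \le |C| = \delta$, hence
\[
|e_k(\lambda_1,\ldots,\lambda_d)| \le \binom{d}{k} \delta^k.
\]

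Finally, summing via the binomial theorem,
\[
|\det A - 1| = \Bigl|\sum_{k=1}^d e_k(\lambda)\Bigr| \le \sum_{k=1}^d \binom{d}{k} \delta^k = (1 + \delta)^d - 1,
\]
which is exactly the claim. There is no serious obstacle: the only non-trivial input is the elementary inequality $\|C\|_{\mathrm{op}} \le |C|$, which lets us pass from the operator-norm bound on eigenvalues to a Frobenius-norm bound, and the existence of a nearest point in $\SO{d}$, which is immediate from compactness.
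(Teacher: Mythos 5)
Your proof is correct, and it takes a genuinely different route from the paper's. The paper works with the singular values of $A$: writing $\sigma_1\le\dots\le\sigma_d$ for the singular values and setting $r_1=\sgn(\det A)\,\sigma_1$, $r_i=\sigma_i$ for $i\ge 2$, it uses the identities $\det A=\prod_i r_i$ and $\dist(A,\SOd)^2=\sum_i(r_i-1)^2$, and then proves the scalar inequality $|\prod_i r_i-1|\le\prod_i(|r_i-1|+1)-1$ by a case analysis (handling $\det A>0$ and $\det A\le 0$ separately, and analyzing the absence of interior critical points on the cube $[0,1]^d$). Your argument instead factors out the nearest rotation $R$, reducing to $C=R^TA-I$ with $|C|_F=\dist(A,\SOd)$, and then expands $\det(I+C)-1=\sum_{k\ge 1}e_k(\lambda)$ in the elementary symmetric polynomials of the eigenvalues of $C$, bounding each via the chain $\rho(C)\le\|C\|_{\mathrm{op}}\le|C|_F$ and the binomial theorem. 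Your version avoids the SVD and the case analysis entirely, and is arguably cleaner; the only ingredients are the existence of a nearest point (compactness of $\SOd$), unitary invariance of the Frobenius norm, and the spectral-radius bound. The paper's version has the minor advantage of making the dependence on the individual singular values explicit, which fits the surrounding discussion of volume distortion, but as a proof of the stated inequality yours is more economical.
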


\begin{proof}
Let $\sigma_1\le \sigma_2\le \ldots\le \sigma_d$ be the singular values of $A$, and define $r_1 = \sgn(\det A)\, \sigma_1$, $r_i = \sigma_i$ for $i=2,\ldots,d$.
We then have $\det A= \Pi_{i=1}^d r_i$ and for every $1\le i\le d$,
\[
\dist\brk{A,\SOd} = \sqrt{\sum_{j=1}^d (r_j-1)^2} \ge |r_i-1|.
\]
We will show that 
\beq
\label{eq:ineq_vol_dist}
|\Pi_{i=1}^d r_i-1| \le  \Pi_{i=1}^d (|r_i - 1|+1) - 1,
\eeq
which will complete the proof since it will follow that
\[
|\det A - 1| \le  \Pi_{i=1}^d (|r_i - 1|+1) - 1 \le   \brk{\dist\brk{A,\SOd} +1}^d-1.
\]

We turn to prove \eqref{eq:ineq_vol_dist}. 
Bounding from above is trivial:
  \[
    \Pi_{i=1}^d r_i \le  \Pi_{i=1}^d (|r_i - 1|+1)  
  \]

  The less trivial part is bounding from below. We need to show:
  \[
    \Pi_{i=1}^d r_i-1 \ge -\brk{ \Pi_{i=1}^d (|r_i - 1|+1) - 1 } = 1- \Pi_{i=1}^d (|r_i - 1|+1)
  \]
  which is equivalent to:
  \beq
\label{eq:ineq_vol_dist2}
    2 \le  \Pi_{i=1}^d r_i +  \Pi_{i=1}^d (|r_i - 1|+1)
  \eeq

First, assume $A\in \GLp$.
Note that if $r_j\ge 1$ for some $j$, 
\[
\Pi_{i=1}^d r_i +  \Pi_{i=1}^d (|r_i - 1|+1) \ge \Pi_{i\ne j} r_i +  \Pi_{i\ne j} (|r_i - 1|+1).
\]
Therefore, it is enough to prove \eqref{eq:ineq_vol_dist2} under the assumption that $r_i\in(0,1)$ for all $i$,
that is, to prove that
\[
 f(r_1,\ldots,r_d) = \Pi_{i=1}^d r_i +  \Pi_{i=1}^d (2-r_i) \ge 2.
\]
Notice that the inequality holds on the boundary of $[0,1]^d$, and therefore it is enough to prove that $f$ has no local minima at $(0,1)^d$.
Indeed, if $r=(r_1,\dots,r_d) \in \partial([0,1]^d)$ then there exists some $i$ such that $r_i=0$ or $r_i=1$. If $r_i=1$ the inequality holds by induction on the dimension. If $r_i=0$, the inequality reduces to $ \Pi_{j \neq i} (2-r_j) \ge 1$ which holds by the assumption $r_i \in (0,1)$.
  
Differentiating in the interior $(0,1)^d$ we obtain
\[
\frac{\pl f}{\pl r_j} = \Pi_{i\ne j} r_i - \Pi_{i\ne j} (2-r_i) < 0,
\]
since $r_i\in(0,1)$ for every $i$.
Therefore there are no local minima at $(0,1)^d$, which completes the proof for $A\in \GLp$.

For $A\notin \GLp$, we need to prove \eqref{eq:ineq_vol_dist2}.
Note that in this case $r_1\le 0$, and therefore $|r_1-1|+1= 2 - r_1$.
We obtain that
\[
\begin{split}
\Pi_{i=1}^d (|r_i - 1|+1)+ \Pi_{i=1}^d r_i 
	&= 2  \Pi_{i=2}^d (|r_i - 1|+1) - r_1\brk{ \Pi_{i=2}^d (|r_i - 1|+1)- \Pi_{i=2}^d r_i} \\
	&\ge 2 - r_1\brk{ \Pi_{i=2}^d (|r_i - 1|+1)- \Pi_{i=2}^d r_i}
\end{split}
\]
Now, the term in the parentheses is non-negative and $-r_1\ge 0$, and therefore \eqref{eq:ineq_vol_dist2} holds.
\end{proof}

\begin{lemma}
  \label{lem:bound_vol_distortion_dist_SO2}
Let $f:(\M,\g) \to (\N,\h)$ be an orientation-preserving diffeomorphism between compact manifolds. Then
\[
  |\Vol_\h(\N) -  \Vol_\g(\M)|  \le \int_{\M} \Brk{\brk{\dist\brk{df,\SO{\g,f^*\h}} +1}^d-1} \,   \dVolg  
\]
\end{lemma}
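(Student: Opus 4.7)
The plan is to reduce the global volume comparison to a pointwise linear-algebraic statement, which will then follow immediately from \lemref{lem:bound_vol_distortion_dist_SO1}.

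First, since $f:(\M,\g) \to (\N,\h)$ is an orientation-preserving diffeomorphism, the change of variables formula together with \propref{pn:Det_and_volumes_1} (applied fiberwise) gives
\[
\Vol_\h(\N) = \int_\M f^\star \dVolh = \int_\M \Det df \,\dVolg,
\]
where $\Det df \ge 0$ almost everywhere by the orientation-preserving assumption. Subtracting $\Vol_\g(\M) = \int_\M 1\,\dVolg$ and applying the triangle inequality for integrals yields
\[
|\Vol_\h(\N) - \Vol_\g(\M)| \le \int_\M |\Det df - 1| \,\dVolg.
\]

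Next I would reduce the integrand at each point $x \in \M$ to the Euclidean setting of \lemref{lem:bound_vol_distortion_dist_SO1}. Choosing a positively-oriented $\g_x$-orthonormal basis of $T_x\M$ and a positively-oriented $\h_{f(x)}$-orthonormal basis of $T_{f(x)}\N$, the linear map $df_x$ is represented by a matrix $A \in M_d$. Under this identification, Definition \ref{def:intrinsic_det} gives $\Det df_x = \det A$, and the inner-products on $T_x^*\M \otimes T_{f(x)}\N$ induced by $\g_x$ and $\h_{f(x)}$ reduce to the Frobenius inner product on $M_d$, so
\[
\dist(df_x, \SO{\g_x,\h_{f(x)}}) = \dist(A, \SOd).
\]
Therefore \lemref{lem:bound_vol_distortion_dist_SO1} applies directly to give the pointwise estimate
\[
|\Det df_x - 1| = |\det A - 1| \le \bigl(\dist(A,\SOd) + 1\bigr)^d - 1 = \bigl(\dist(df_x, \SO{\g,f^*\h}) + 1\bigr)^d - 1.
\]

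Integrating this pointwise inequality over $\M$ and combining with the first display gives the claimed bound. There is no real obstacle here: all the work has already been done in \lemref{lem:bound_vol_distortion_dist_SO1}; the only thing to verify carefully is that the intrinsic quantities $\Det df$ and $\dist(df, \SO{\g,f^*\h})$ coincide with their Euclidean matrix counterparts once one picks positively-oriented orthonormal frames, which is immediate from the definitions in Section~\ref{sec:det_cof}.
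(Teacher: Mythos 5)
Your proof is correct and follows essentially the same argument as the paper: apply the change-of-variables identity $\Vol_\h(\N) = \int_\M \Det df\,\dVolg$, reduce the integrand pointwise to a matrix statement via positively-oriented orthonormal frames, and invoke Lemma~\ref{lem:bound_vol_distortion_dist_SO1}. No gaps; this is the paper's proof.
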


\begin{proof} 
\[
\Volh\N = \int_{\N} d\Vol_{\h} = \int_{\M} f^\star(d\Vol_{\h}) = \int_{\M} (\Det df) \, \dVolg.
\]

Let $p\in\M$ and  let $v_i,w_i$ be positively oriented orthonormal bases for $T_p\M$ and $T_{f(p)}\N$. Let $A$ be the representing matrix of $df_p$ in these bases. Then, (i) $\det A>0$ since $f$ is orientation-preserving, (ii) $\Det df = \det A$ and (iii) 
\[
\dist_{(\g,f^*\h)}\brk{df,\SO{\g,f^*\h}} =\dist_{\euc}\brk{A,\SOd},
\]
where $\euc$ is the Euclidean metric.
Thus
\[ 
\begin{split}
|\Vol(\N) -  \Vol(\M)| &=  \left|\int_{\M} (\Det df-1) \, \dVolg \right| \\
&=  \left|\int_{\M} (\det A-1) \, \dVolg \right| \\
&\le \int_{\M} |\Det A-1| \dVolg \\
&\le \int_{\M} \Brk{\brk{\dist_{\euc}\brk{A,\SOd} +1}^d-1} \,   \dVolg \\
& =  \int_{\M} \Brk{\brk{\dist\brk{df,\SO{\g,f^*\h}} +1}^d-1} \,   \dVolg ,
\end{split}
\]
where the the passage to the fourth line follows from \lemref{lem:bound_vol_distortion_dist_SO1}.
\end{proof}


\newcommand{\etalchar}[1]{$^{#1}$}
\providecommand{\bysame}{\leavevmode\hbox to3em{\hrulefill}\thinspace}
\providecommand{\MR}{\relax\ifhmode\unskip\space\fi MR }
\providecommand{\MRhref}[2]{%
  \href{http://www.ams.org/mathscinet-getitem?mr=#1}{#2}
}
\providecommand{\href}[2]{#2}

\bibliographystyle{amsalpha}

\end{document}